\theoremstyle{plain}
\newtheorem{theorem}{Theorem}[section]
\newtheorem{corollary}{Corollary}[section]
\newtheorem{proposition}{Proposition}[section]
\theoremstyle{definition}
\newtheorem{definition}{Definition}[section]
\newtheorem{remark}{Remark}[section]
\newtheorem{example}{Example}[section]
\begin{document}

\title{Holomorphic last multipliers on complex manifolds}
\author{Mircea Crasmareanu, Cristian Ida and Paul Popescu}
\date{}
\maketitle
\begin{abstract}
The  goal of this paper is to study the theory of last multipliers in the framework of complex manifolds with a fixed holomorphic volume form. The motivation of our study is based on the equivalence between a holomorphic ODE system and an associated real ODE system and we are interested how we can relate holomorphic last multipliers with real last multipliers. Also, we consider some applications of our study for holomorphic gradient vector fields  on holomorphic Riemannain manifolds as well as for holomorphic Hamiltonian vector fields and holomorphic Poisson bivector fields on holomorphic Poisson manifolds.
\end{abstract}

\medskip
\begin{flushleft}
\strut \textbf{2010 Mathematics Subject Classification:} 34A26, 58A15, 34C40, 53C56, 53D17.

\textbf{Key Words:} Last multiplier, complex manifold, holomorphic volume form, Poisson structure.
\end{flushleft}

\section{Introduction and Preliminaries}
\setcounter{equation}{0}
\subsection{Introduction}

The last multipliers are very useful tools in the study of completely integrate systems of first-order ODE's \cite{Fl}. More precisely, if the given system $\Sigma $ has $n$ equations and we already know $(n-2)$ first integrals of $\Sigma $ then, by using last multipliers, we can obtain a new first integral and hence we can approach the topics of complete integrability or super-integrability. Also, the last multipliers  sometimes allow the construction of an associated Lagrangian in the case of systems of second-order ODE's \cite{Nu-Le-JMP}, so they are useful for the inverse problems, and for integrating nonholonomic systems \cite{Z}.

Their history begins with Jacobi as it is pointed out in \cite{C2} and hence, sometimes they appear in literature as {\it Jacobi last multipliers} as in \cite{Nu, Nu-Le, Nu-Le-JMP} and references therein. For all those interested in historical aspects we refer to the survey \cite{B-G}. Until recently, their use was restricted to systems on Euclidean spaces \cite{Oz}. The first named author of this work initiated their study on manifolds in \cite{C1} and \cite{C2}, where he pointed out their relationship with the Liouville equation of transport. Since then, the last multipliers  have been considered in various (non-flat) settings such as: Riemannian and Poisson geometry in \cite{C2}, Lie-Poisson structures in \cite{C3}, weighted manifolds in \cite{C4}, Lie algebroids in \cite{c:h}.

The present paper makes a new extension, namely one  referring to the complex geometry framework. More precisely, we start with a complex manifold and work in the category of holomorphic objects. So, the considered volume forms, vector fields, functions as well as Riemannian or Poisson structures are all supposed to be holomorphic. The motivation of our study is based on the equivalence between a holomorphic ODE system and an associated real ODE system. In this way, we obtain that a holomorphic last multiplier for a holomorphic vector field defines a real last multiplier for two associated real vector fields and conversely (Theorem \ref{thlm}). 

The contents of the paper is as follows: in the next subsection we fix a complex manifold of dimension $n$ and a holomorphic volume form $\omega $ and we recall some properties of the divergence of a holomorphic vector field with respect to $\omega$ as basis of our work. The following section describes the general approach of holomorphic last multipliers for holomorphic vector fields with respect to $\omega $ as well as their general properties. When the complex manifold is endowed with a holomorphic Riemannian metric, we discuss the case of gradient vector field of a holomorphic function and we relate its holomorphic last multipliers with real last multipliers for gradient vector fields associated with anti-K\"{a}hlerian metrics defined by real and imaginary parts of the holomorphic Riemannian metric (Theorem \ref{tg}). Also, some methods for obtaining inverse holomorphic multipliers on complex manifolds will be considered. The case of holomorphic Poisson manifolds is studied separately in the third section, where the particular class of holomorphic Hamiltonian vector fields associated with holomorphic Poisson structures with a special view towards the unimodular case and some examples are considered. Also, we relate the holomorphic last multipliers for holomorphic Hamiltonian vector fields on holomorphic Poisson manifolds with real last multipliers for Hamiltonian vector fields corresponding to some natural real Poisson structures on the underlying real manifold (Theorems \ref{th1} and \ref{th2}). In the subsection 3.3 we extend holomorphic last multipliers from holomorphic vector fields to holomorphic multivectors and we study carefully the Poisson bivector as a remarkable example. Concerning practical examples we consider some particular cases in  low dimensions $n=2$ and  $n=3$, respectively.

\subsection{Preliminaries}

Let $M$ be a $n$-dimensional complex manifold. A holomorphic section of its holomorphic tangent bundle $T^{1,0}M$ defines a holomorphic vector field on $M$, and we denote by $\mathcal{X}_{\mathcal{O}}(M)$ the Lie algebra of holomorphic vector fields on $M$. Denote by $\Omega^p_{\mathcal{O}}(M)$ the set of holomorphic $p$-forms on $M$, and  suppose that $M$ admits a nowhere vanishing holomorphic $n$-form $\omega$, which is also called a \textit{holomorphic volume form} on $M$. There are many examples of complex manifolds which posses holomorphic volume forms.
\begin{enumerate}
\item[(i)] Let $(M,\Omega)$ be a $2n$-dimensional holomorphic symplectic manifold, i.e. $\Omega$ is a  nondegenerate closed holomorphic $2$-form on $M$. Then $\omega:=\Omega^n$ is a holomorphic volume form on $M$.
\item[(ii)] Let $G$ be a $n$-dimensional complex Lie group, and $\{Z_k\}$, $k=1,\ldots,n$ be  any basis of left invariant holomorphic vector fields of $\mathcal{X}_\mathcal{O}(G)$. If $\{\theta^k\}$, $k=1,\ldots,n$ are their dual left invariant holomorphic one forms, that is, $\theta^k(Z_j)=\delta^k_j$, then $\omega:=\theta^1\wedge\ldots\wedge\theta^n$ defines a holomorphic volume form on $G$.
\item[(iii)] A holomorphic Riemannian metric $g$ on $M$ is defined as a global holomorphic section of the symmetric holomorphic forms $\odot^2 (T^{1,0}M)^*$ such that the "index-lowering" map $T^{1,0}M\rightarrow (T^{1,0}M)^*$ (obtained by indentifying $\otimes^2(T^{1,0}M)^*$ with ${\rm Hom}(T^{1,0}M,(T^{1,0}M)^*)$ in the tautological fashion) is an isomorphism, see \cite{LeB}. In local complex coordinates $(z^1,\ldots,z^n)$ on $M$, a holomorphic metric $g$ appears as $g=\sum\limits_{j,k}g_{jk}dz^jdz^k$, where $\det(g_{jk})\neq0$, and for every $j,k,l=1,\ldots,n$ we have $\partial g_{jk}/\partial \overline{z}^l=0$ and $g_{jk}=g_{kj}$. For instance, $g=\sum\limits_{i=1}^n(dz^i)^2$ defines a holomorphic Riemannian metric on $\mathbb{C}^n$, called the \textit{holomorphic euclidean} metric. Also, if $G$ is a $n$-dimensional complex Lie group, and $\{Z_a\}$, $a=1,\ldots,n$ is a basis of its holomorphic Lie algebra, then using local complex coordinates $(z^1,\ldots,z^n)$ on $G$,  we can write $Z_a=\chi^j_a(z)(\partial/\partial z^j)$ where ${\rm rank}(\chi^j_a)_{n\times n}=n$  and $\chi^j_a(z)$ are holomorphic functions on $G$. Then, a holomorphic Riemannian metric on $G$ can be defined by seting $g_{jk}=\delta_{ab}\chi^a_j\chi^b_k$, where $(\chi^a_j(z))=(\chi^j_a(z))^{-1}$. Moreover, if $G$ is semi-simple and $C^c_{ab}$ are complex constant of structure of $G$, that is $[Z_a,Z_b]=C^c_{ab}Z_c$, then the matrix of complex Cartan-Killing  elements $C_{ab} = C^{c}_{ad}C^{d}_{cb}$ of $G$ is invertible, and a new holomorphic Riemannian metric on $G$ can be defined by setting $g_{jk}=C_{ab}\chi^a_j\chi^b_k$.

Such a holomorphic Riemannian metric $g$ defines a \textit{holomorphic metric volume form} $\omega_g$, see \cite{LeB}, as a global holomorphic $n$-form on $M$ such that
\begin{displaymath}
\omega_g(E_1,\ldots,E_n)=\pm1,
\end{displaymath}
where $\{E_1,\ldots,E_n\}$ is an orthonormal holomorphic frame on $(M,g)$, that is $g(E_j,E_k)=\delta_{jk}$, $j,k=1,\ldots,n$. If $(M,g)$ admits such a volume element, it admits precisely two of them.

\end{enumerate}

According to \cite{T-V}, such a form can be used in the definition of the holomorphic divergence with respect to $\omega$, that is a map ${\rm div}_{\omega}:\mathcal{X}_{\mathcal{O}}(M)\rightarrow\mathcal{O}(M)$ given by
\begin{equation}
\label{I1}
{\rm div}_{\omega}(Z)\omega=\mathcal{L}_Z\omega
\end{equation}
where $\mathcal{L}_Z$ is the Lie derivative with respect to $Z\in\mathcal{X}_\mathcal{O}(M)$.

For instance if $\{Z_k\}$, $k=1,\ldots,n$ is any basis of left invariant holomorphic vector fields of a $n$-dimensional complex Lie group $G$, and if $Z=\sum\limits_{k=1}^nZ^kZ_k$ is any holomorphic vector field on $G$, then ${\rm div}_{\omega}(Z)=\sum\limits_{k=1}^n Z_k(Z^k)$.

Taking into account $\mathcal{L}_{[Z,W]}=[\mathcal{L}_Z,\mathcal{L}_W]$ it follows that
\begin{equation}
\label{I2}
{\rm div}_{\omega}([Z,W])=Z({\rm div}_\omega (W))-W({\rm div}_\omega (Z)),\,\forall\,Z,W\in\mathcal{X}_\mathcal{O}(M).
\end{equation}
Moreover, using the Cartan's formula $\mathcal{L}_Z=\partial\circ \imath_Z+\imath_Z\circ \partial$ (where $d=\partial+\overline{\partial}$ is the usual decomposition of the exterior derivative and $\imath_Z$ is the interior product with respect to a holomorphic vector field $Z$), we obtain
\begin{equation}
\label{I3}
{\rm div}_\omega(Z)\omega=\partial(\imath_Z\omega),
\end{equation}
and, it is easy to see that
\begin{equation}
\label{I4}
{\rm div}_\omega(f\cdot Z)=Zf+f{\rm div}_\omega(Z)
\end{equation}
for every holomorphic function $f\in\mathcal{O}(M)$ and every holomorphic vector field $Z\in\mathcal{X}_\mathcal{O}(M)$. Also, for our next considerations it is natural to consider the set of holomorphic first integrals of a holomorphic vector field $Z$, that is $I^1_\mathcal{O}(Z)=\{f\in\mathcal{O}(M)\,|\,Zf=0\}$, see \cite{T-V}.

\section{Holomorphic last multipliers for holomorphic vector fields}
\setcounter{equation}{0}
In this section we describe the general approach of holomorphic last multipliers for holomorphic vector fields with respect to a holomorphic volume form as well as their general properties. Also, we consider the case of gradient vector fields on holomorphic Riemannian  manifolds and some methods to obtain inverse holomorphic multipliers on complex manifolds.

\subsection{Basic definitions and results on holomorphic last multipliers}
Assume that the complex manifold $M$ is endowed with a holomorphic volume form $\omega\in\Omega^n_\mathcal{O}(M)$.  
Let $Z=\sum\limits_{i=1}^nZ^i(z)(\partial/\partial z^i)\in\mathcal{X}_{\mathcal{O}}(M)$ be a holomorphic vector field on $M$ written in local complex coordinates $(z^1,\ldots,z^n)$ on $M$, $\theta=\imath_Z\omega\in\Omega^{n-1}_{\mathcal{O}}(M)$ and
\begin{equation}
\label{comsys}
\frac{d z^k}{dt}=Z^k(z^1(t),\ldots,z^n(t)),\,1\leq k\leq n\,,\,t\in\mathbb{R}
\end{equation}
a complex ODE system on $M$ defined by the holomorphic vector field $Z$.

Then, in relation with the classical definition of a last multiplier function for a vector field on smooth manifolds, we consider  the following.

\begin{definition}
\label{d1}
A holomorphic function $\alpha\in\mathcal{O}(M)$ is called a \textit{holomorphic last multiplier} of the complex ODE system generated by $Z$ (or holomorphic last multiplier for $Z\in\mathcal{X}_\mathcal{O}(M)$) if
\begin{equation}
\label{II1}
\partial(\alpha \theta):=\partial\alpha\wedge\theta+\alpha\cdot \partial\theta=0.
\end{equation}
\end{definition}
The above definition of holomorphic last multipliers on complex manifolds  has the following characterizations in terms of some cohomological operators.

If the $(n-1)$-th holomorphic de Rham cohomology group of $M$ vanishes, then $\alpha\in\mathcal{O}(M)$ is a holomorphic last multiplier for $Z\in\mathcal{X}_{\mathcal{O}}(M)$ if and only if there is $\eta\in \Omega^{n-2}_\mathcal{O}(M)$ such that $\alpha\theta=\partial\eta$.

Also, let us consider the holomorphic version of Marsden differential, which is defined as  follows: for a holomorphic function $f\in\mathcal{O}(M)$ we consider the operator $\partial^f:\Omega^\bullet_\mathcal{O}(M)\rightarrow\Omega^{\bullet+1}_{\mathcal{O}}(M)$ given by $\partial^f\varphi=(1/f)\partial(f\varphi)$. Then $\alpha\in\mathcal{O}(M)$ is a holomorphic last multiplier for $Z\in\mathcal{X}_\mathcal{O}(M)$ if and only if $\theta$ is $\partial^\alpha$-closed.

Moreover, another characterization of the holomorphic last multipliers on complex manifolds can be given using the cohomology attached to a function introduced  in \cite{Mo1,Mo2} as  follows: if $f\in\mathcal{O}(M)$ and $k\in\mathbb{Z}$,  then we can define the linear operator $\partial^{(k)}_f:\Omega_\mathcal{O}^p(M)\rightarrow\Omega_\mathcal{O}^{p+1}(M)$ by
\begin{equation}
\label{a1}
\partial^{(k)}_f\varphi=f\partial\varphi-(p-k)\partial f\wedge\varphi\,,\,\forall\,\varphi\in\Omega_\mathcal{O}^p(M).
\end{equation}
It is easy to see that $\partial^{(k)}_f\circ\partial_f^{(k)}=0$, and we denote by $H^\bullet_{f,k}(M)$ the cohomology of the differential complex $(\Omega^\bullet_\mathcal{O}(M),\partial_f^{(k)})$,  which is called the \textit{holomorphic cohomology groups  of $M$ attached to the function $f$ and to the integer $k$}. This cohomology was considered for the first time in \cite{Mo1} in the context of Poisson geometry, and more generally, Nambu-Poisson geometry.

Using \eqref{II1} and \eqref{II2} we obtain
\begin{proposition}
 A holomorphic function $\alpha\in\mathcal{O}(M)$ is a holomorphic last multiplier for $Z\in\mathcal{X}_\mathcal{O}(M)$ if and only if $\theta$ is $\partial_\alpha^{(n)}$-closed.
\end{proposition}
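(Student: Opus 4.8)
The plan is to show that the two conditions are not merely equivalent but literally the same equation, by evaluating the operator $\partial_\alpha^{(n)}$ on $\theta$ and comparing with Definition \ref{d1}. First I would do the degree bookkeeping. Since $\theta=\imath_Z\omega$ is a holomorphic $(n-1)$-form, the defining formula \eqref{a1} is applied with $p=n-1$ and $k=n$. The decisive observation is that the numerical coefficient appearing there becomes $p-k=(n-1)-n=-1$.

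Next I would substitute these values into \eqref{a1}. The minus sign in the definition together with $p-k=-1$ produces a plus sign, so that
\[
\partial_\alpha^{(n)}\theta=\alpha\,\partial\theta-(p-k)\,\partial\alpha\wedge\theta=\alpha\,\partial\theta+\partial\alpha\wedge\theta .
\]
By the Leibniz rule for $\partial$ acting on the product of the holomorphic function $\alpha$ and the holomorphic form $\theta$, the right-hand side is precisely $\partial(\alpha\theta)$. Comparing with \eqref{II1} in Definition \ref{d1}, the requirement $\partial_\alpha^{(n)}\theta=0$ is identical to $\partial(\alpha\theta)=0$, which is exactly the statement that $\alpha$ is a holomorphic last multiplier for $Z$. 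Both implications of the proposition then follow at once.

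There is no substantial obstacle here: the argument is a one-line identification once the degree of $\theta$ is taken into account. The only point that requires care is the sign coming from the degree shift, namely verifying that $p-k=-1$ so that the minus sign in \eqref{a1} turns into the plus sign needed to match the Leibniz expansion of $\partial(\alpha\theta)$; choosing $k=n$ is exactly what makes the operator $\partial_\alpha^{(n)}$ reproduce the last multiplier condition for the $(n-1)$-form $\theta$.
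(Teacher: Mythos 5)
Your proof is correct and matches the paper's (implicit) argument: the paper simply notes that the proposition follows from Definition \ref{d1} and the operator definition \eqref{a1}, which is exactly the computation you spell out. Your degree bookkeeping is right — with $p=n-1$, $k=n$ one gets $p-k=-1$, so $\partial_\alpha^{(n)}\theta=\alpha\,\partial\theta+\partial\alpha\wedge\theta=\partial(\alpha\theta)$, making the two conditions literally identical.
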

Moreover, if we take $M=\mathbb{C}^n$, then in \cite{Mo1, Mo2} it is shown that $H^{n-1}_{f,n}(\mathbb{C}^n)=0$. Thus, if we consider $\omega=dz^1\wedge\ldots\wedge dz^n$ the standard volume form on $\mathbb{C}^n$ we have
\begin{proposition}
A holomorphic function $\alpha\in\mathcal{O}(\mathbb{C}^n)$ is a holomorphic last multiplier for $Z\in\mathcal{X}_\mathcal{O}(\mathbb{C}^n)$ if and only if there exists $\eta\in\Omega^{n-2}_\mathcal{O}(\mathbb{C}^n)$ such that $\theta=\partial_\alpha^{(n)}\eta$.
\end{proposition}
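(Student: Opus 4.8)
The plan is to pass everything through the preceding proposition, which states that $\alpha$ is a holomorphic last multiplier for $Z$ exactly when $\theta=\imath_Z\omega$ is $\partial_\alpha^{(n)}$-closed, and then to feed in the vanishing $H^{n-1}_{f,n}(\mathbb{C}^n)=0$ taken from \cite{Mo1,Mo2} with $f=\alpha$. The point worth noting at the outset is that $\theta\in\Omega^{n-1}_\mathcal{O}(\mathbb{C}^n)$, so the relevant cohomological degree is precisely $n-1$, which is exactly the degree in which the cited vanishing is available; this is what makes the argument close up.

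For the implication from right to left, I would assume $\theta=\partial_\alpha^{(n)}\eta$ for some $\eta\in\Omega^{n-2}_\mathcal{O}(\mathbb{C}^n)$ and simply apply $\partial_\alpha^{(n)}$ once more. Using the identity $\partial_\alpha^{(n)}\circ\partial_\alpha^{(n)}=0$ recorded just after \eqref{a1}, I obtain $\partial_\alpha^{(n)}\theta=\partial_\alpha^{(n)}\bigl(\partial_\alpha^{(n)}\eta\bigr)=0$, so $\theta$ is $\partial_\alpha^{(n)}$-closed, and by the preceding proposition $\alpha$ is a holomorphic last multiplier for $Z$. This direction is purely formal and uses no special feature of $\mathbb{C}^n$.

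For the converse, I would start from $\alpha$ being a holomorphic last multiplier, so that $\theta$ is $\partial_\alpha^{(n)}$-closed by the preceding proposition and therefore determines a class in $H^{n-1}_{\alpha,n}(\mathbb{C}^n)$. Since this group vanishes, the class is trivial, meaning $\theta$ is $\partial_\alpha^{(n)}$-exact; that is, there exists $\eta\in\Omega^{n-2}_\mathcal{O}(\mathbb{C}^n)$ with $\theta=\partial_\alpha^{(n)}\eta$, which is the asserted conclusion.

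The only genuine input is the vanishing of $H^{n-1}_{\alpha,n}(\mathbb{C}^n)$, and this is precisely the step I would expect to be the main obstacle if one had to prove it rather than cite it: it is a twisted, holomorphic analogue of a Poincar\'e-type lemma asserting that the complex $(\Omega^\bullet_\mathcal{O}(\mathbb{C}^n),\partial_\alpha^{(n)})$ is acyclic in degree $n-1$. Granting the cited result, however, the proof reduces to the two short bookkeeping steps above, translating \emph{closed} and \emph{exact} for $\partial_\alpha^{(n)}$ through the characterization of last multipliers.
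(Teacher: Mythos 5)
Your proof is correct and follows exactly the paper's (implicit) argument: the paper deduces the proposition directly from the preceding characterization ($\alpha$ is a holomorphic last multiplier iff $\theta$ is $\partial_\alpha^{(n)}$-closed) together with the cited vanishing $H^{n-1}_{f,n}(\mathbb{C}^n)=0$ from Monnier, the converse being the formal consequence of $\partial_\alpha^{(n)}\circ\partial_\alpha^{(n)}=0$. Your identification of the degree bookkeeping ($\theta\in\Omega^{n-1}_\mathcal{O}(\mathbb{C}^n)$ matching the degree of the vanishing) is the right observation, and nothing further is needed.
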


Now, for every $\alpha\in\mathcal{O}(M)$ we have $\partial\alpha\wedge\omega=0$. Then, for every $Z\in\mathcal{X}_\mathcal{O}(M)$  it results
\begin{displaymath}
0=\imath_Z(\partial\alpha\wedge\omega)=(\imath_Z\partial\alpha)\cdot\omega-\partial\alpha\wedge(\imath_Z\omega),
\end{displaymath}
or equivalently,
\begin{displaymath}
Z(\alpha)\cdot\omega=\partial\alpha\wedge(\imath_Z\omega)=\partial\alpha\wedge\theta.
\end{displaymath}
Thus, using \eqref{I3} and \eqref{II1} we obtain
\begin{proposition}
A holomorphic function $\alpha\in\mathcal{O}(M)$ is a holomorphic last multiplier for $Z\in\mathcal{X}_\mathcal{O}(M)$ if and only if
\begin{equation}
\label{II2}
Z(\alpha)+\alpha\cdot{\rm div}_\omega(Z)=0.
\end{equation}
\end{proposition}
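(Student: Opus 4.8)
The plan is to reduce the defining condition \eqref{II1} to a scalar identity by rewriting both terms $\partial\alpha\wedge\theta$ and $\alpha\cdot\partial\theta$ as multiples of the volume form $\omega$, and then to exploit the fact that $\omega$ vanishes nowhere. All the necessary pieces are already assembled in the excerpt, so the argument is short.

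First I would recall that, by Definition \ref{d1}, the function $\alpha$ is a holomorphic last multiplier for $Z$ precisely when $\partial(\alpha\theta) = \partial\alpha\wedge\theta + \alpha\cdot\partial\theta = 0$, where $\theta = \imath_Z\omega$. The two summands are then handled separately. For the first, I would invoke the displayed computation placed immediately before the statement, namely $Z(\alpha)\cdot\omega = \partial\alpha\wedge\theta$, which was obtained by applying $\imath_Z$ to the identity $\partial\alpha\wedge\omega = 0$ (valid because $\partial\alpha\wedge\omega$ is a holomorphic $(n+1)$-form on a manifold of complex dimension $n$). For the second, formula \eqref{I3} gives $\partial\theta = \partial(\imath_Z\omega) = {\rm div}_\omega(Z)\cdot\omega$.

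Substituting these two expressions into the definition yields
\begin{displaymath}
\partial(\alpha\theta) = \left(Z(\alpha) + \alpha\cdot{\rm div}_\omega(Z)\right)\omega .
\end{displaymath}
Since $\omega$ is a nowhere vanishing holomorphic $n$-form, this top-degree form is identically zero if and only if its scalar coefficient vanishes, which is exactly condition \eqref{II2}; this establishes both implications at once. The only step that uses more than formal manipulation is this last one, where the nowhere-vanishing hypothesis on $\omega$ converts the vanishing of an $n$-form into the vanishing of a holomorphic function, so I do not anticipate any genuine obstacle in completing the proof.
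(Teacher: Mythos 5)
Your proposal is correct and follows essentially the same route as the paper: both rewrite $\partial\alpha\wedge\theta$ as $Z(\alpha)\cdot\omega$ via the contraction of the identity $\partial\alpha\wedge\omega=0$, replace $\alpha\cdot\partial\theta$ by $\alpha\cdot{\rm div}_\omega(Z)\cdot\omega$ using \eqref{I3}, and conclude from the fact that $\omega$ vanishes nowhere. Your explicit remark that this last step is what makes the equivalence go through in both directions is precisely the (implicit) content of the paper's argument, so nothing is missing.
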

\begin{example}
Let us consider an $n$-dimensional complex manifold $M$ endowed with a holomorphic volume form $\omega$ and let $Z\in\mathcal{X}_\mathcal{O}(M)$ be a holomorphic polynomial vector field on $M$. We recall that a holomorphic polynomial function $f\in\mathcal{O}(M)$ is called {\it a Darboux polynomial} for $Z$ if there is $g\in\mathcal{O}(M)$ such that $Z(f)=g\cdot f$, see \cite{I-Y}. The holomorphic function $g$ is said to be the cofactor corresponding to such holomorphic Darboux polynomial. Now, if $f_1,\ldots,f_p$ are holomorphic Darboux polynomials for $Z$ with corresponding holomorphic cofactors $g_k$, $k=1,\ldots,p$, then one can look for a holomorphic last multiplier for $Z$ of the form
\begin{displaymath}
\alpha=\prod_{k=1}^pf_k^{m_k}\,,\,m_k\in\mathbb{C}.
\end{displaymath}
Then, we have
\begin{displaymath}
\frac{Z(\alpha)}{\alpha}=\sum_{k=1}^pm_k\frac{Z(f_k)}{f_k}=\sum_{k=1}^pm_kg_k,
\end{displaymath}
and therefore, if the complex constants $m_k$ can be chosen such that $\sum\limits_{k=1}^pm_kg_k=-{\rm div}_\omega(Z)$, then, according to \eqref{II2}$, \alpha$ is a holomorphic last multiplier for $Z$.

\end{example}
Let us make some remarks concerning the importance of the relation \eqref{II2}.
\begin{enumerate}

\item[(i)] By \eqref{II2}, we see that a function $f\in\mathcal{O}(M)$ is last multiplier for the divergenceless holomorphic vector field $Z$ if and only if $\alpha\in I^1_\mathcal{O}(Z)$. The importance of this result is shown by the fact that three remarkable classes of divergence-free vector fields are provided by: Killing vector fields in Riemannian geometry, Hamiltonian vector fields in symplectic geometry and Reeb vector fields in contact geometry. Also, there are many equations of mathematical physics corresponding to the vector fields without divergence.
\item[(ii)] If $Z\in\mathcal{X}_\mathcal{O}(M)$ is not divergenceless, then we have the following relation between the holomorphic first integrals and the holomorphic last multipliers. Namely, from properties of Lie derivative, the ratio of two holomorphic last multipliers is a holomorphic first integral and conversely, the product between a holomorphic first integral and a holomorphic last multiplier is a holomorphic last multiplier. So, since $I^1_\mathcal{O}(Z)$ is a subalgebra in $\mathcal{O}(M)$ it results that the set of holomorphic last multipliers for $Z$ is a $I^1_\mathcal{O}(Z)$-module.
 \item[(iii)] The relations \eqref{I4} and \eqref{II2} say that $\alpha\in\mathcal{O}(M)$ is a holomorphic last multiplier for $Z\in\mathcal{X}_\mathcal{O}(M)$ if and only if ${\rm div}_\omega(\alpha Z)=0$. Thus, the set of holomorphic last multipliers is a "measure of how far away"  $Z$ is from being divergenceless.
\item[(iv)] To every holomorphic vector field $Z$ on $M$ we can associate an \textit{adjoint} $Z^*$, acting on $\mathcal{O}(M)$ by $Z^*(f)=-Z(f)-f{\rm div}_\omega(Z)$. Then, the set of holomorphic last multipliers of $Z$ coincides with $I^1_\mathcal{O}(Z^*)$.
\end{enumerate}

\begin{proposition}
Let $\alpha\in\mathcal{O}(M)$. The set of holomorphic vector fields for which $\alpha$ is a holomorphic last multiplier is a holomorphic Lie subalgebra in $\mathcal{X}_\mathcal{O}(M)$.
\end{proposition}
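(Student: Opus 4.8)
The plan is to work entirely with the pointwise characterization \eqref{II2}, which identifies the set in question with
\begin{displaymath}
S_\alpha=\{Z\in\mathcal{X}_\mathcal{O}(M)\,:\,Z(\alpha)+\alpha\cdot{\rm div}_\omega(Z)=0\}.
\end{displaymath}
Since $\mathcal{X}_\mathcal{O}(M)$ is already a Lie algebra, proving that $S_\alpha$ is a Lie subalgebra splits into two parts: closure under the $\mathbb{C}$-linear structure and closure under the Lie bracket.

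First I would establish that $S_\alpha$ is a $\mathbb{C}$-vector subspace. The divergence is $\mathbb{C}$-linear, being built from the Lie derivative in \eqref{I1}; in particular \eqref{I4} with a constant $f$ gives ${\rm div}_\omega(cZ)=c\,{\rm div}_\omega(Z)$. Hence the assignment $Z\mapsto Z(\alpha)+\alpha\cdot{\rm div}_\omega(Z)$ is $\mathbb{C}$-linear in $Z$, and its kernel $S_\alpha$ is automatically a subspace. This step is routine.

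The substance lies in closure under the bracket, and this is where the structural identities from the Preliminaries do the work. Given $Z,W\in S_\alpha$, I would compute $[Z,W](\alpha)=Z(W(\alpha))-W(Z(\alpha))$ by first substituting the membership relations $W(\alpha)=-\alpha\,{\rm div}_\omega(W)$ and $Z(\alpha)=-\alpha\,{\rm div}_\omega(Z)$, then expanding with the Leibniz rule, each vector field acting as a derivation on $\mathcal{O}(M)$. Substituting the membership relations a second time collapses the two products of the form ${\rm div}_\omega(Z)\,{\rm div}_\omega(W)$ against each other, leaving
\begin{displaymath}
[Z,W](\alpha)=-\alpha\bigl(Z({\rm div}_\omega(W))-W({\rm div}_\omega(Z))\bigr).
\end{displaymath}
Comparing the parenthesised term with \eqref{I2}, namely ${\rm div}_\omega([Z,W])=Z({\rm div}_\omega(W))-W({\rm div}_\omega(Z))$, yields $[Z,W](\alpha)+\alpha\cdot{\rm div}_\omega([Z,W])=0$, so $[Z,W]\in S_\alpha$ by \eqref{II2}.

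The only delicate point—the ``main obstacle'', such as it is—is the bookkeeping in the bracket computation: one applies the membership relations, then the Leibniz rule, then the membership relations again, and must verify that the bilinear divergence terms cancel exactly. This cancellation is not accidental; it is forced by the derivation identity \eqref{I2} for the divergence of a bracket, which is precisely the compatibility between ${\rm div}_\omega$ and the Lie algebra structure that makes the statement hold.
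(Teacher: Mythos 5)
Your proof is correct and follows essentially the same route as the paper's: a direct bracket computation combining the characterization \eqref{II2} with the divergence identity \eqref{I2}. The only difference is in bookkeeping --- you substitute the membership relations multiplicatively, $W(\alpha)=-\alpha\,{\rm div}_\omega(W)$, whereas the paper divides by $\alpha$ (writing terms like $Z\left(W(\alpha)/\alpha\right)$, which tacitly assumes $\alpha$ nonvanishing), so your version is marginally cleaner; you also record the routine $\mathbb{C}$-linearity check that the paper leaves implicit.
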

\begin{proof}
Let $Z,W\in\mathcal{X}_\mathcal{O}(M)$ such that $\alpha$ is a holomorphic last multiplier for both of them. Using \eqref{I2} and \eqref{II2} we have
\begin{eqnarray*}
[Z,W](\alpha)+\alpha {\rm div}_\omega([Z,W])&=&Z(W(\alpha))+\alpha Z({\rm div}_\omega(W))-W(Z(\alpha))-\alpha W({\rm div}_\omega(Z))\\
&=&Z(W(\alpha))-\alpha Z\left(W(\alpha)/\alpha\right)-W(Z(\alpha))+\alpha W\left(Z(\alpha)/\alpha\right)\\
&=&0.
\end{eqnarray*}
\end{proof}
Now, we search for a holomorphic last multiplier for $Z\in\mathcal{X}_\mathcal{O}(M)$ of divergence type, that is $\alpha={\rm div}_\omega(W)$ for some $W\in\mathcal{X}_\mathcal{O}(M)$. Using \eqref{II2} it results
\begin{equation}
\label{II4}
Z({\rm div}_\omega(W))+{\rm div}_\omega(W)\cdot {\rm div}_\omega(Z)=0.
\end{equation}
Multiplying \eqref{II4} by $\omega$ we have
\begin{displaymath}
\mathcal{L}_Z({\rm div}_\omega(W))\cdot\omega+{\rm div}_\omega(W)\cdot\mathcal{L}_Z\omega=0,
\end{displaymath}
or equivalently
\begin{displaymath}
\mathcal{L}_Z({\rm div}_\omega(W)\cdot\omega)=\mathcal{L}_Z\mathcal{L}_W\omega=0.
\end{displaymath}
Thus, we have
\begin{proposition}
If $W\in\mathcal{X}_\mathcal{O}(M)$ satisfies $\mathcal{L}_Z\mathcal{L}_W\omega=0$ then $\alpha={\rm div}_\omega(W)$ is a holomorphic last multiplier for $Z\in\mathcal{X}_\mathcal{O}(M)$.
\end{proposition}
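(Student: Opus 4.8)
The plan is to reduce the statement to the divergence characterization of holomorphic last multipliers recorded in \eqref{II2}, namely that $\alpha$ is a holomorphic last multiplier for $Z$ precisely when $Z(\alpha)+\alpha\,{\rm div}_\omega(Z)=0$. Since the hypothesis is phrased in terms of $\mathcal{L}_Z\mathcal{L}_W\omega$, the first step is to translate everything into the single nowhere-vanishing form $\omega$, and only at the very end use that $\omega$ is nowhere zero to strip it off. In effect the argument simply runs in reverse the chain of equalities displayed immediately before the statement, which already established the forward implication.

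Concretely, I would first use the defining relation \eqref{I1} of the holomorphic divergence to rewrite the inner Lie derivative as $\mathcal{L}_W\omega={\rm div}_\omega(W)\,\omega=\alpha\,\omega$, where $\alpha:={\rm div}_\omega(W)$ is the candidate multiplier. Applying $\mathcal{L}_Z$ and expanding by the Leibniz rule (the Lie derivative being a derivation, a property equally valid for the holomorphic objects here) gives $\mathcal{L}_Z(\alpha\,\omega)=Z(\alpha)\,\omega+\alpha\,\mathcal{L}_Z\omega$; invoking \eqref{I1} once more, now for $Z$, turns the second term into $\alpha\,{\rm div}_\omega(Z)\,\omega$. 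Collecting terms yields
\[
\mathcal{L}_Z\mathcal{L}_W\omega=\bigl(Z(\alpha)+\alpha\,{\rm div}_\omega(Z)\bigr)\,\omega.
\]
Finally I would invoke the hypothesis $\mathcal{L}_Z\mathcal{L}_W\omega=0$ to conclude that the scalar coefficient $Z(\alpha)+\alpha\,{\rm div}_\omega(Z)$ annihilates $\omega$; because $\omega$ is a holomorphic volume form, hence nowhere vanishing, this coefficient must vanish identically. That is exactly condition \eqref{II2}, so $\alpha={\rm div}_\omega(W)$ is a holomorphic last multiplier for $Z$.

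There is no serious obstacle: the entire proof is a two-line derivation that is precisely the converse of the computation already carried out just before the proposition. The only point demanding a little care is the final division by $\omega$, which is legitimate exactly because a holomorphic volume form is by definition nowhere vanishing; absent that nondegeneracy one could only deduce the weaker conclusion that \eqref{II2} holds off the zero locus of $\omega$.
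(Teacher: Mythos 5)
Your proof is correct and is essentially the paper's own argument: the paper arrives at the proposition by the same chain of equalities $\mathcal{L}_Z\mathcal{L}_W\omega=\mathcal{L}_Z\bigl({\rm div}_\omega(W)\,\omega\bigr)=\bigl(Z({\rm div}_\omega(W))+{\rm div}_\omega(W)\,{\rm div}_\omega(Z)\bigr)\omega$, displayed just before the statement, and you simply read that reversible chain in the other direction. Your explicit remark that the final step uses the nowhere-vanishing of $\omega$ is a point the paper leaves implicit, but it is the same proof.
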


Although the study of holomorphic last multipliers on complex manifolds seems to be identically with the study of real last multipliers on smooth manifolds, in the end of this subsection we present briefly our motivation for their study, and how a holomorphic last multiplier defines a real last multiplier for the associated real ODE system. 

Starting from the equivalence between a holomorphic ODE $dz/dt=F(z)$ and a real ODE system $dx/dt=U(x,y)$ , $dy/dt=V(x,y)$, where $z=x+iy$ and $F(z)=U(x,y)+iV(x,y)$ is a holomorphic function, the holomorphic ODE system \eqref{comsys} is equivalent with the real ODE system
\begin{equation}
\label{realsys}
\left\{
\begin{array}{ll}
\frac{dx^k}{dt}=X^k(x^{1}(t),\ldots,x^{n}(t),y^1(t),\ldots,y^n(t))&  \\
 &  \\
\frac{dy^{k}}{dt}=Y^{k}(x^1(t),\ldots,x^{n}(t),y^1(t),\ldots,y^n(t))&
\end{array}
\right., \,t\in\mathbb{R}
\end{equation}
where $z^k(t)=x^k(t)+iy^{k}(t)$, $k=1,\ldots,n$ and $Z^k(z)=X^k(x,y)+iY^k(x,y)$. The above real ODE system is canonically associated with the real vector field $Z_{\mathbb{R}}=2{\rm Re}\,Z=Z+\overline{Z}$, where overlines denotes the complex conjugation. Another canonically associated real vector field with $Z$ is $W_\mathbb{R}=2{\rm Im}\,Z=-i(Z-\overline{Z})$.
 
Now, if $\omega\in\Omega^n_\mathcal{O}(M)$ is a holomorphic volume form on the $n$-dimensional complex manifold $M$, then it is well know that $\omega_{\mathbb{R}}=\omega\wedge\overline{\omega}\in\Omega^{2n}(M)$ is a total real volume form on the underlying real manifold $M$, and we are interested if a holomorphic last multiplier for the holomorphic vector fied $Z$ defines a real last multiplier for the real vector field $Z_\mathbb{R}$ or $W_\mathbb{R}$ and conversely. In fact, we have
\begin{theorem}
\label{thlm}
A holomorphic function $\alpha\in\mathcal{O}(M)$ is a holomorphic last multiplier for the holomorphic vector field $Z\in\mathcal{X}_\mathcal{O}(M)$ if and only if $|\alpha|^2\in C^\infty(M;\mathbb{R})$ is a real last multiplier for both associated real vector fields $Z_\mathbb{R}$ and $W_\mathbb{R}$. 
\end{theorem}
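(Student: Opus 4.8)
The plan is to reduce both the holomorphic and the real sides to their respective divergence-form criteria and then compare. By the Proposition establishing \eqref{II2}, $\alpha$ is a holomorphic last multiplier for $Z$ if and only if $A:=Z(\alpha)+\alpha\,{\rm div}_\omega(Z)=0$. On the real side, the analogous criterion states that $|\alpha|^2$ is a real last multiplier for a real vector field $X$ (with respect to $\omega_\mathbb{R}$) precisely when $X(|\alpha|^2)+|\alpha|^2\,{\rm div}_{\omega_\mathbb{R}}(X)=0$. So the two ingredients I need are the real divergences of $Z_\mathbb{R}$ and $W_\mathbb{R}$ expressed through ${\rm div}_\omega(Z)$, together with the directional derivatives $Z_\mathbb{R}(|\alpha|^2)$ and $W_\mathbb{R}(|\alpha|^2)$.

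For the divergences, the key observations are the type-based vanishings $\mathcal{L}_{\overline{Z}}\omega=0$ and $\mathcal{L}_Z\overline{\omega}=0$: indeed $\imath_{\overline{Z}}\omega=0$ since $\omega$ has type $(n,0)$ and $\overline{Z}$ has type $(0,1)$, while $d\omega=0$ because $\omega$ is a holomorphic top form, and the second identity follows by conjugation. Combined with $\mathcal{L}_Z\omega={\rm div}_\omega(Z)\,\omega$ and $\mathcal{L}_{\overline{Z}}\overline{\omega}=\overline{{\rm div}_\omega(Z)}\,\overline{\omega}$, a Leibniz expansion of $\mathcal{L}_X(\omega\wedge\overline\omega)$ yields
\begin{displaymath}
{\rm div}_{\omega_\mathbb{R}}(Z_\mathbb{R})={\rm div}_\omega(Z)+\overline{{\rm div}_\omega(Z)},\qquad {\rm div}_{\omega_\mathbb{R}}(W_\mathbb{R})=-i\bigl({\rm div}_\omega(Z)-\overline{{\rm div}_\omega(Z)}\bigr).
\end{displaymath}

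For the derivatives, since $\alpha$ is holomorphic and $\overline\alpha$ antiholomorphic, $Z(\overline\alpha)=0$ and $\overline{Z}(\alpha)=0$, so $Z(|\alpha|^2)=\overline\alpha\,Z(\alpha)$ and $\overline{Z}(|\alpha|^2)=\alpha\,\overline{Z(\alpha)}$. Substituting these together with the divergence formulas into the two real criteria and factoring, I expect both conditions to collapse onto a single complex quantity: the $Z_\mathbb{R}$-criterion becomes ${\rm Re}(\overline\alpha\,A)=0$ and the $W_\mathbb{R}$-criterion becomes ${\rm Im}(\overline\alpha\,A)=0$. Hence $|\alpha|^2$ is a real last multiplier for \emph{both} $Z_\mathbb{R}$ and $W_\mathbb{R}$ if and only if $\overline\alpha\,A=0$ identically.

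The forward implication is then immediate, since $A=0$ forces $\overline\alpha A=0$. The one genuinely complex-analytic point — and the step I expect to be the main obstacle — is the converse: deducing $A=0$ from $\overline\alpha\,A=0$. If $\alpha\equiv0$ everything is trivial; otherwise the zero locus of the holomorphic function $\alpha$ is a proper analytic subset with empty interior, so $A$ (itself holomorphic) vanishes on the dense open set where $\alpha\neq0$, and therefore $A\equiv0$ by the identity theorem. This analytic-continuation argument is exactly what has no counterpart in the smooth setting, and it is what makes the two real conditions together equivalent to the single holomorphic one.
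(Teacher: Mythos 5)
Your proof is correct, and it arrives at exactly the paper's pivotal identity --- that, writing $A:=Z(\alpha)+\alpha\,{\rm div}_\omega(Z)$, the two real last-multiplier conditions are precisely ${\rm Re}\,(\overline{\alpha}A)=0$ and ${\rm Im}\,(\overline{\alpha}A)=0$ --- but by a genuinely different computational route. The paper works at the level of forms: it computes $\theta_\mathbb{R}=\imath_{Z_\mathbb{R}}(\omega\wedge\overline{\omega})=\theta\wedge\overline{\omega}+(-1)^n\omega\wedge\overline{\theta}$ and expands $d(|\alpha|^2\theta_\mathbb{R})$ through the bidegree splitting $d=\partial+\overline{\partial}$, substituting $\partial\theta={\rm div}_\omega(Z)\,\omega$ and $\partial\alpha\wedge\theta=Z(\alpha)\,\omega$, and likewise for $\eta_\mathbb{R}=\imath_{W_\mathbb{R}}\omega_\mathbb{R}$. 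You instead stay at the level of functions: you quote the divergence characterization of real last multipliers and derive ${\rm div}_{\omega_\mathbb{R}}(Z_\mathbb{R})=2\,{\rm Re}\,{\rm div}_\omega(Z)$ and ${\rm div}_{\omega_\mathbb{R}}(W_\mathbb{R})=2\,{\rm Im}\,{\rm div}_\omega(Z)$ from the type-vanishing facts $\mathcal{L}_{\overline{Z}}\omega=\mathcal{L}_Z\overline{\omega}=0$ together with the Leibniz rule, never computing $\theta_\mathbb{R}$ at all. Your route buys cleaner bookkeeping (no sign-tracking through wedge expansions) and isolates two reusable divergence formulas; the paper's route has the merit of working directly from the definition $d(|\alpha|^2\theta_\mathbb{R})=0$ rather than invoking the divergence form of the real criterion --- though your invocation is legitimate, being the real analogue of the paper's own equivalence between \eqref{II1} and \eqref{II2}, established in \cite{C2}. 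On the converse you are in fact more careful than the paper: where the paper dispatches $\overline{\alpha}A=0\Rightarrow A=0$ with the bare remark ``since $\alpha\neq0$'', you argue (componentwise, when $\alpha\not\equiv0$) that the zero locus of $\alpha$ is a proper analytic subset with empty interior, so the holomorphic function $A$ vanishes on a dense open set and hence identically; this genuinely complex-analytic step, which you rightly flag as having no smooth counterpart, correctly covers the case of an $\alpha$ with nonempty zero set, and the degenerate case $\alpha\equiv0$ is trivial on both sides as you note.
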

\begin{proof}
Firstly, by direct computation we have
\begin{eqnarray*}
\theta_\mathbb{R}&=&\imath_{Z_\mathbb{R}}\omega_{\mathbb{R}}=\imath_Z(\omega\wedge\overline{\omega})+\imath_{\overline{Z}}(\omega\wedge\overline{\omega})=\theta\wedge\overline{\omega}+(-1)^n\omega\wedge\overline{\theta}.
\end{eqnarray*}
Now, according to the study of real last multipliers on smooth manifolds (see \cite{C2, C3}), $|\alpha|^2$ is a real last multiplier for the real vector field $Z_{\mathbb{R}}$ if $d(|\alpha|^2\theta_\mathbb{R})=0$. By direct computation we have
\begin{eqnarray*}
d(|\alpha|^2\theta_\mathbb{R})&=&(\partial+\overline{\partial})(\alpha\overline{\alpha})\wedge\theta_\mathbb{R}+\alpha\overline{\alpha}(\partial+\overline{\partial})\theta_\mathbb{R}\\
&=&(\overline{\alpha}\partial\alpha+\alpha\overline{\partial}\overline{\alpha})\wedge(\theta\wedge\overline{\omega}+(-1)^n\omega\wedge\overline{\theta})+\alpha\overline{\alpha}(\partial\theta\wedge\overline{\omega}+\omega\wedge\overline{\partial}\,\overline{\theta})\\
&=&\overline{\alpha}\partial\alpha\wedge\theta\wedge\overline{\omega}+\alpha\overline{\alpha}\partial\theta\wedge\overline{\omega}+\alpha\omega\wedge\overline{\partial}\overline{\alpha}\wedge\overline{\theta}+\alpha\overline{\alpha}\omega\wedge\overline{\partial}\,\overline{\theta}.
\end{eqnarray*}
But, $\partial\theta={\rm div}_\omega(Z)\omega$ and $\partial\alpha\wedge\theta=Z(\alpha)\omega$. Then, we have
\begin{eqnarray*}
d(|\alpha|^2\theta_\mathbb{R})&=&\left[\overline{\alpha}(Z(\alpha)+\alpha{\rm div}_\omega(Z))+\alpha(\overline{Z(\alpha)+\alpha{\rm div}_\omega(Z)})\right]\omega\wedge\overline{\omega}.
\end{eqnarray*}
Thus, according with \eqref{II2}, if $\alpha$ is a holomorphic last multiplier for the holomorphic vector field $Z$ then $d(|\alpha|^2\theta_\mathbb{R})=0$, that is $|\alpha|^2$ is a real last multiplier for $Z_\mathbb{R}$.

Now, if we put $\eta_\mathbb{R}=\imath_{W_\mathbb{R}}\omega_\mathbb{R}$, we obtain
\begin{displaymath}
\eta_\mathbb{R}=-i\left[\theta\wedge\overline{\omega}-(-1)^n\omega\wedge\overline{\theta}\right],
\end{displaymath}
and a similar computation as above yields
\begin{displaymath}
d(|\alpha|^2\eta_\mathbb{R})=-i\left[\overline{\alpha}(Z(\alpha)+\alpha{\rm div}_\omega(Z))-\alpha(\overline{Z(\alpha)+\alpha{\rm div}_\omega(Z)})\right]\omega\wedge\overline{\omega}.
\end{displaymath}
The above relation implies according with \eqref{II2} that if $\alpha$ is holomorphic last multiplier for $Z$ then $|\alpha|^2$ is a real last multiplier for $W_\mathbb{R}$.

Conversely, from the above computations, we have that if $|\alpha|^2$ is a real last multiplier for both real vector fields $Z_\mathbb{R}$ and $W_\mathbb{R}$ then 
\begin{equation}
\label{hlm1}
{\rm Re}\,\left[\overline{\alpha}(Z(\alpha)+\alpha{\rm div}_\omega(Z))\right]=0\,\,\,{\rm and}\,\,\,{\rm Im}\,\left[\overline{\alpha}(Z(\alpha)+\alpha{\rm div}_\omega(Z))\right]=0,
\end{equation}
that is $\overline{\alpha}(Z(\alpha)+\alpha{\rm div}_\omega(Z))=0$. Since $\alpha\neq0$, this is just \eqref{II2} and the proof is finished.
\end{proof}

\subsection{Holomorphic last multipliers for gradient vector fields on holomorphic Riemannian manifolds}
In this subsection we study holomorphic last multipliers for holomorphic gradient vector fields of a complex manifold $M$ endowed with a holomorphic Riemannian metric $g$.

If $f\in\mathcal{O}(M)$ then, as usual, we define the holomorphic gradient vector field of $f$ by
\begin{equation}
\label{II5}
g(W,{\rm grad}f)=W(f),\,\forall\,Z\in\mathcal{X}_\mathcal{O}(M),
\end{equation}
and the Laplace operator for  $f\in\mathcal{O}(M)$ by
\begin{equation}
\label{II6}
\Delta f=({\rm div}_{\omega_g}\circ {\rm grad})f.
\end{equation}
Now, if $\alpha\in\mathcal{O}(M)$ is a holomorphic last multiplier for $Z={\rm grad}f$, the relation \eqref{II2} becomes
\begin{equation}
\label{II7}
g({\rm grad}f,{\rm grad}\alpha)+\alpha\Delta f=0.
\end{equation}
Using a straightforward computation in local complex coordinates on $M$, the following relation (similar to the smooth case)  also holds for holomorphic Riemannian manifolds
\begin{equation}
\label{II8}
g({\rm grad}f,{\rm grad}\alpha)=\frac{1}{2}(\Delta(f\alpha)-f\Delta\alpha-\alpha\Delta f).
\end{equation}
Hence, we obtain
\begin{equation}
\label{II9}
\Delta(f\alpha)+\alpha\Delta f=f\Delta \alpha.
\end{equation}
The last equation leads to
\begin{proposition}
Let $(M,g)$ be a holomorphic Riemannian manifold and $f,\alpha\in\mathcal{O}(M)$ such that $f$ is a holomorphic last multiplier for ${\rm grad}\alpha$ and $\alpha$ is a holomorphic last multiplier for ${\rm grad}f$. Then $f\alpha$ is a holomorphic harmonic function on $(M,g)$.
\end{proposition}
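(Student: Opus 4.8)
The plan is to apply each of the two hypotheses separately to the identity \eqref{II9} and then add the resulting relations. The key observation is that, although $g(\mathrm{grad}\,f,\mathrm{grad}\,\alpha)$ is symmetric in $f$ and $\alpha$ by symmetry of $g$, the relation \eqref{II9} itself is \emph{not} symmetric: it was obtained using specifically that $\alpha$ is a holomorphic last multiplier for $\mathrm{grad}\,f$. Exploiting this asymmetry is the heart of the argument.

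First I would invoke the hypothesis that $\alpha$ is a holomorphic last multiplier for $\mathrm{grad}\,f$; this is exactly the situation producing \eqref{II7}, and hence, via \eqref{II8}, the identity \eqref{II9}:
\begin{displaymath}
\Delta(f\alpha)+\alpha\Delta f=f\Delta\alpha.
\end{displaymath}
Next I would run the same derivation with the roles of $f$ and $\alpha$ interchanged, now using the second hypothesis that $f$ is a holomorphic last multiplier for $\mathrm{grad}\,\alpha$. Since \eqref{II8} is visibly symmetric under $f\leftrightarrow\alpha$ (both $\Delta(f\alpha)$ and $g(\mathrm{grad}\,f,\mathrm{grad}\,\alpha)$ are unchanged by the exchange), this yields the companion identity
\begin{displaymath}
\Delta(f\alpha)+f\Delta\alpha=\alpha\Delta f.
\end{displaymath}

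Adding the two identities, the mixed terms $\alpha\Delta f$ and $f\Delta\alpha$ on the left cancel against the same terms on the right, leaving $2\Delta(f\alpha)=0$, hence $\Delta(f\alpha)=0$; by \eqref{II6} this says precisely that $f\alpha$ is holomorphic harmonic on $(M,g)$. I do not expect any genuine obstacle here: once both instances of \eqref{II9} are in hand, the conclusion is a single cancellation. The only step meriting a moment's care is checking that the passage to \eqref{II9} is truly symmetric under the exchange $f\leftrightarrow\alpha$, so that the second identity is legitimately supplied by the second hypothesis; this is immediate from the symmetry of $g$ together with $f\alpha=\alpha f$.
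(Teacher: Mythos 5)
Your proposal is correct and takes essentially the same route as the paper: the paper derives \eqref{II9} from the hypothesis that $\alpha$ is a holomorphic last multiplier for ${\rm grad}f$ and then simply states that this ``leads to'' the proposition, the implicit step being exactly your second, role-swapped instance of \eqref{II9} supplied by the other hypothesis, after which adding the two identities cancels the mixed terms and gives $2\Delta(f\alpha)=0$. Your extra care about the asymmetry of \eqref{II9} versus the symmetry of \eqref{II8} is exactly the point the paper leaves unspoken, so there is no gap.
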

\begin{corollary}
Let $(M,g)$ be a holomorphic Riemannian manifold and $\alpha\in\mathcal{O}(M)$. Then $\alpha$ is a holomorphic last multiplier for $Z={\rm grad}\alpha$ if and only if $\alpha^2$ is a holomorphic harmonic function on $(M,g)$.
\end{corollary}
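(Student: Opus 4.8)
The plan is to derive the stated equivalence directly from the two displayed relations \eqref{II7} and \eqref{II8}, specialized to the diagonal case $f=\alpha$. The key observation is that \eqref{II8} is an \emph{unconditional} identity, valid for any pair of holomorphic functions, whereas \eqref{II7} records precisely the defining condition that $\alpha$ be a holomorphic last multiplier for ${\rm grad}f$. This asymmetry is exactly what will let me read off the equivalence in both directions from a single algebraic manipulation.

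First I would put $f=\alpha$ in the polarization identity \eqref{II8}, which collapses the two equal terms $f\Delta\alpha$ and $\alpha\Delta f$ into a single one and gives
\begin{displaymath}
g({\rm grad}\alpha,{\rm grad}\alpha)=\frac{1}{2}\Delta(\alpha^2)-\alpha\Delta\alpha.
\end{displaymath}
Next I would recall that, by \eqref{II7} with $f=\alpha$, the function $\alpha$ is a holomorphic last multiplier for ${\rm grad}\alpha$ if and only if $g({\rm grad}\alpha,{\rm grad}\alpha)+\alpha\Delta\alpha=0$. Substituting the previous display into this condition, the two $\alpha\Delta\alpha$ contributions cancel and the condition reduces to $\tfrac{1}{2}\Delta(\alpha^2)=0$, that is, $\Delta(\alpha^2)=0$. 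Because every step is a reversible equality, this chain yields both implications at once: $\alpha$ is a holomorphic last multiplier for ${\rm grad}\alpha$ precisely when $\alpha^2$ is a holomorphic harmonic function on $(M,g)$.

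I expect no genuine obstacle here, since the argument is essentially a one-line specialization once \eqref{II7} and \eqref{II8} are in hand; indeed the forward implication also follows immediately from the preceding Proposition by taking $f=\alpha$, for then its two hypotheses coincide in the single assumption that $\alpha$ be a last multiplier for ${\rm grad}\alpha$, forcing $\alpha\cdot\alpha=\alpha^2$ to be harmonic. The only point requiring a moment of care is that one should \emph{not} conclude by substituting $f=\alpha$ into \eqref{II9}: that relation was derived already under the last-multiplier hypothesis \eqref{II7}, so it encodes only the forward direction, whereas working from the unconditional identity \eqref{II8} keeps the reasoning valid in both directions and produces the full equivalence.
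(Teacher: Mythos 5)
Your proof is correct and follows essentially the paper's own route: the corollary is precisely the specialization $f=\alpha$ of the characterization \eqref{II7} combined with the unconditional identity \eqref{II8}, which is how the paper (stating the corollary without a separate proof, as a consequence of the preceding computation) intends it to be read. Your cautionary remark is also well taken: since \eqref{II9} was derived under the last-multiplier hypothesis, it would only deliver the forward implication, so deriving the equivalence from \eqref{II8} as you do is the correct way to secure the ``if and only if.''
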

\begin{corollary}
Let $(M,g)$ be a holomorphic Riemannian manifold and $\alpha\in\mathcal{O}(M)$. Then $\alpha^2$ is a holomorphic harmonic function on $(M,g)$ if and only if
\begin{displaymath}
\alpha\Delta\alpha+g({\rm grad}\alpha,{\rm grad}\alpha)=0.
\end{displaymath}
\end{corollary}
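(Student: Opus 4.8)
The plan is to specialize the general product-type identity \eqref{II8} to the case $f=\alpha$ and simply read off the claimed equivalence. The key point to keep in mind is that \eqref{II8} is asserted as an identity valid on every holomorphic Riemannian manifold --- it does not presuppose any last-multiplier condition --- so no extra hypotheses on $\alpha$ are required and the whole statement is a one-line algebraic consequence.

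First I would substitute $f=\alpha$ into \eqref{II8}. Since the right-hand side becomes $\tfrac12\big(\Delta(\alpha^2)-\alpha\Delta\alpha-\alpha\Delta\alpha\big)$, this yields
\[
g({\rm grad}\alpha,{\rm grad}\alpha)=\frac12\Delta(\alpha^2)-\alpha\Delta\alpha,
\]
and after rearranging,
\[
\Delta(\alpha^2)=2\big(\alpha\Delta\alpha+g({\rm grad}\alpha,{\rm grad}\alpha)\big).
\]
From here the conclusion is immediate: recalling that $\alpha^2$ is holomorphic harmonic on $(M,g)$ precisely when $\Delta(\alpha^2)=0$, and since the scalar factor $2$ is invertible, the vanishing of the left-hand side is equivalent to the vanishing of the bracket on the right. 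Hence $\alpha^2$ is harmonic if and only if $\alpha\Delta\alpha+g({\rm grad}\alpha,{\rm grad}\alpha)=0$.

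There is no genuine obstacle in this argument; its entire content resides in the identity \eqref{II8}, which I am entitled to assume. The only subtlety worth flagging is that one must invoke the \emph{unconditional} identity \eqref{II8} rather than \eqref{II9}: the latter was derived under the standing assumption that $\alpha$ is a holomorphic last multiplier for ${\rm grad}f$, so using it here would amount to begging the question. As a sanity check, one could instead obtain the same result by combining the preceding corollary (harmonicity of $\alpha^2$ is equivalent to $\alpha$ being a holomorphic last multiplier for ${\rm grad}\alpha$) with \eqref{II7} specialized to $f=\alpha$, in which case $g({\rm grad}\alpha,{\rm grad}\alpha)+\alpha\Delta\alpha=0$ is exactly the last-multiplier condition; but the direct substitution into \eqref{II8} is cleaner and self-contained.
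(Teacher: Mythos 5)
Your proposal is correct and matches the paper's (implicit) argument: the corollary is an immediate consequence of the unconditional identity \eqref{II8} specialized to $f=\alpha$, which gives $\Delta(\alpha^2)=2\bigl(\alpha\Delta\alpha+g({\rm grad}\,\alpha,{\rm grad}\,\alpha)\bigr)$, exactly as you compute. Your flagged subtlety --- that \eqref{II9} presupposes the last-multiplier hypothesis and must not be invoked --- is well taken, and your sanity-check route via the preceding corollary together with \eqref{II7} is the other equivalent reading of the paper's chain of results, so nothing is missing.
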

According to \cite{LeB}, if $g$ is replaced with $\widetilde{g}=f\cdot g$, where $f$ is a non-vanishing holomorphic function on $M$, then $\omega_{\widetilde{g}}=f^{\frac{n}{2}}\omega_g$ is also a holomorphic volume form on $(M,\widetilde{g})$. Then, using \eqref{I1}, by direct computation we get
\begin{equation}
\label{II10}
{\rm div}_{\omega_{\widetilde{g}}}(Z)={\rm div}_{\omega_g}(Z)+\frac{n}{2}Z(\log f).
\end{equation}
Thus, from \eqref{II2} and \eqref{II10} we obtain
\begin{proposition}
Let $(M,g)$ be a holomorphic Riemannian manifold. The holomorphic last multipliers for $Z\in\mathcal{X}_\mathcal{O}(M)$ with respect to $g$ coincide with the holomorphic last multipliers for $Z\in\mathcal{X}_\mathcal{O}(M)$ with respect to $\widetilde{g}=f\cdot g$ if and only if $\log f\in I^1_\mathcal{O}(Z)$.
\end{proposition}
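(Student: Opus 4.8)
The plan is to reduce both notions to the divergence characterization \eqref{II2} together with the volume-form transformation rule \eqref{II10}. By definition, the holomorphic last multipliers of $Z$ with respect to $g$ are the functions $\alpha\in\mathcal{O}(M)$ satisfying $Z(\alpha)+\alpha\,{\rm div}_{\omega_g}(Z)=0$, whereas those with respect to $\widetilde g$ satisfy $Z(\alpha)+\alpha\,{\rm div}_{\omega_{\widetilde g}}(Z)=0$. Inserting \eqref{II10} into the second condition rewrites it as
\[
Z(\alpha)+\alpha\,{\rm div}_{\omega_g}(Z)+\tfrac{n}{2}\,\alpha\,Z(\log f)=0,
\]
so the two defining equations differ only by the term $\tfrac n2\,\alpha\,Z(\log f)$. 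For the forward implication I would assume $\log f\in I^1_\mathcal{O}(Z)$, i.e. $Z(\log f)=0$; then \eqref{II10} gives ${\rm div}_{\omega_{\widetilde g}}(Z)={\rm div}_{\omega_g}(Z)$, the two equations become literally identical, and their solution sets coincide with no further work.

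For the converse I would argue by contraposition. Suppose $\log f\notin I^1_\mathcal{O}(Z)$, so $Z(\log f)\not\equiv 0$, and choose a nonzero holomorphic last multiplier $\alpha$ of $Z$ relative to $g$. This $\alpha$ solves the first equation, so by the displayed identity $Z(\alpha)+\alpha\,{\rm div}_{\omega_{\widetilde g}}(Z)=\tfrac n2\,\alpha\,Z(\log f)$. Since $M$ is connected, $\mathcal{O}(M)$ is an integral domain, so the product of the nonzero functions $\alpha$ and $Z(\log f)$ does not vanish identically; hence $\alpha$ fails the $\widetilde g$-equation and the two multiplier sets are distinct.

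The main obstacle is the cancellation of $\alpha$ in the converse. It relies on the convention, already used in the proof of Theorem \ref{thlm}, that a last multiplier is holomorphic and not identically zero, together with the absence of zero divisors in $\mathcal{O}(M)$ on a connected manifold (the identity theorem): from $\tfrac n2\,\alpha\,Z(\log f)=0$ with $\alpha\not\equiv 0$ one gets $Z(\log f)=0$ on the dense complement of the analytic zero set of $\alpha$, hence everywhere by continuity. The only delicate point is the existence of a nonzero multiplier, which is automatic locally since \eqref{II2} is a linear first-order equation along the flow of $Z$.
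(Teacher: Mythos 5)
Your proof is correct and takes essentially the same route as the paper, whose entire argument is the one-line combination of \eqref{II2} and \eqref{II10} that produces exactly your displayed identity differing by the term $\tfrac{n}{2}\alpha Z(\log f)$; your contrapositive with the identity-theorem cancellation of $\alpha$ merely makes explicit what the paper leaves implicit. The only soft spot, which you flag yourself, is the existence of a nonzero global multiplier serving as a witness in the converse (your local-existence remark does not settle the global vacuous case), but the paper's own proof silently glosses over the same point, so your write-up is a faithful and if anything more careful rendition of the intended argument.
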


Now, using Theorem \ref{thlm}, we can relate the holomorphic last multipliers of a holomorphic vector field of gradient type associated with a holomorphic Riemannian metric with real last multipliers of two vector fields of gradient type associated with anti-K\"{a}hlerian metrics defined by real and imaginary parts of the holomorphic Riemannian metric.

According to \cite{B-F-V, Sl}, there is an one-to-one correspondence between holomorphic Riemannian metrics on the complex manifold $M$ and anti-K\"{a}hlerian metrics on the underlying real manifold $(M,J)$. More exactly, if we consider the local complex coordinates $(z^1=x^1+ix^{n+1},\ldots,z^n=x^n+ix^{2n})$ in a local chart of $M$ and $g=(g_{ij})_{n\times n}$ is a holomorphic Riemannian metric on the complex manifold $M$,  then 
\begin{equation}
\label{r1}
g_{ij}=\frac{1}{2}(h_{ij}-ik_{ij})\,,\,g_{ij}=g\left(\frac{\partial}{\partial z^i},\frac{\partial}{\partial z^j}\right)\,,\,h_{ij}=h\left(\frac{\partial}{\partial x^i},\frac{\partial}{\partial x^j}\right)\,,\,k_{ij}=h_{in+j}=h\left(\frac{\partial}{\partial x^i},\frac{\partial}{\partial x^{n+j}}\right),
\end{equation}
where
\begin{displaymath}
\frac{\partial}{\partial z^j}=\frac{1}{2}\left(\frac{\partial}{\partial x^j}-i\frac{\partial}{\partial x^{n+j}}\right)\,,\,\frac{\partial}{\partial \overline{z}^j}=\frac{1}{2}\left(\frac{\partial}{\partial x^j}+i\frac{\partial}{\partial x^{n+j}}\right).
\end{displaymath}
Here the real part $h$ is an anti-K\"{a}hlerian metric on $(M,J)$ and the imaginary part $k$ is the associated anti-K\"{a}hlerian twin metric defined by $k(X,Y)=h(JX,Y)=h(X,JY)$. Also, the following relations hold:
\begin{equation}
\label{r2}
h_{ij}=-h_{n+in+j}=2{\rm Re}\,g_{ij}=g_{ij}+\overline{g_{ij}}\,,\,h_{n+ij}=h_{in+j}=-2{\rm Im}\,g_{ij}=i(g_{ij}-\overline{g_{ij}})\,,\,
\end{equation}
\begin{equation}
\label{r3}
k_{ij}=h_{in+j}=-2{\rm Im}\,g_{ij}=i(g_{ij}-\overline{g_{ij}})\,,\,k_{n+ij}=k_{in+j}=2{\rm Re}\,g_{ij}=g_{ij}+\overline{g_{ij}}.
\end{equation}
Moreover, if $(h^{\cdot\,\cdot})_{2n\times 2n}$ and $(k^{\cdot\,\cdot})_{2n\times 2n}$ denotes the inverse matrices of $(h_{\cdot\,\cdot})_{2n\times 2n}$ and $(k_{\cdot\,\cdot})_{2n\times 2n}$, respectively, then it is easy to see that
\begin{equation}
\label{r4}
h^{ij}=-h^{n+in+j}={\rm Re}\,g^{ij}\,,\,h^{n+ij}=h^{in+j}={\rm Im}\,g^{ij}
\end{equation}
and
\begin{equation}
\label{r5}
k^{ij}=-k^{n+in+j}={\rm Im}\,g^{ij}\,,\,k^{n+ij}=k^{in+j}=-{\rm Re}\,g^{ij}.
\end{equation} 
Let us denote by ${\rm grad}_hu$ and ${\rm grad}_ku$ the gradient vector fields of a smooth function $u$ with respect to real metrics $h$ and $k$, respectively. We have
\begin{theorem}
\label{tg}
Let $f\in\mathcal{O}(M)$. Then $\alpha$ is a holomorphic last multiplier for ${\rm grad}_g\,(\log f)$ if and only if $|\alpha|^2$ is a real last multiplier for both vector fields $(1/|f|^2){\rm grad}_h|f|^2$ and $(1/|f|^2){\rm grad}_k|f|^2$.
\end{theorem}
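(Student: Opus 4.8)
The plan is to reduce everything to Theorem \ref{thlm} applied to the holomorphic gradient $Z := {\rm grad}_g(\log f)$. That theorem already tells us that $\alpha$ is a holomorphic last multiplier for $Z$ if and only if $|\alpha|^2$ is a real last multiplier for both associated real fields $Z_{\mathbb{R}} = Z + \overline{Z} = 2\,{\rm Re}\,Z$ and $W_{\mathbb{R}} = -i(Z - \overline{Z}) = 2\,{\rm Im}\,Z$. Hence it suffices to identify these two real fields, up to a nonzero constant factor, with $(1/|f|^2){\rm grad}_h|f|^2$ and $(1/|f|^2){\rm grad}_k|f|^2$ respectively, and then invoke the fact that the real last-multiplier condition $d(|\alpha|^2\,\imath_V\omega_{\mathbb{R}}) = 0$ is unchanged when $V$ is rescaled by a nonzero constant (equivalently, \eqref{II2} is homogeneous of degree one in the vector field).

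First I would rewrite the two target fields as logarithmic gradients: since ${\rm grad}_h(\log u) = (1/u)\,{\rm grad}_h u$ for any nonvanishing $u$, we have $(1/|f|^2){\rm grad}_h|f|^2 = {\rm grad}_h(\log|f|^2)$, and likewise for $k$. Writing $\phi := \log f \in \mathcal{O}(M)$ (which requires $f$ to be nowhere vanishing, as is implicit in the statement), the real potential becomes $\log|f|^2 = \log f + \overline{\log f} = 2\,{\rm Re}\,\phi$. Thus the theorem will follow once I prove the purely metric identities $2\,{\rm Re}\,({\rm grad}_g\phi) = c\,{\rm grad}_h(2\,{\rm Re}\,\phi)$ and $2\,{\rm Im}\,({\rm grad}_g\phi) = c\,{\rm grad}_k(2\,{\rm Re}\,\phi)$ for every holomorphic $\phi$, where $c$ is a fixed nonzero constant determined by the normalization in \eqref{r1}.

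The computational core is this pair of identities, and I expect it to be the only genuine obstacle. I would work in the local real coordinates $(x^1,\dots,x^{2n})$ attached to $(z^1,\dots,z^n)$. Setting $p_j := \partial\phi/\partial z^j$ and using holomorphicity ($\partial\phi/\partial\overline{z}^j = 0$) together with the inverse Wirtinger formulas $\partial/\partial x^j = \partial/\partial z^j + \partial/\partial\overline{z}^j$ and $\partial/\partial x^{n+j} = i(\partial/\partial z^j - \partial/\partial\overline{z}^j)$, one finds $\partial(2\,{\rm Re}\,\phi)/\partial x^j = 2\,{\rm Re}\,p_j$ and $\partial(2\,{\rm Re}\,\phi)/\partial x^{n+j} = -2\,{\rm Im}\,p_j$. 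Feeding these into ${\rm grad}_h(2\,{\rm Re}\,\phi) = h^{ab}\,\partial_b(2\,{\rm Re}\,\phi)\,\partial_a$ and substituting the inverse-metric relations \eqref{r4} (so $h^{ij} = {\rm Re}\,g^{ij}$, $h^{n+i\,j} = h^{i\,n+j} = {\rm Im}\,g^{ij}$, $h^{n+i\,n+j} = -{\rm Re}\,g^{ij}$), the $x^i$- and $x^{n+i}$-components collapse, via ${\rm Re}(ab) = {\rm Re}\,a\,{\rm Re}\,b - {\rm Im}\,a\,{\rm Im}\,b$, to $2\,{\rm Re}\,G^i$ and $2\,{\rm Im}\,G^i$ respectively, where $G^i := \sum_j g^{ij}p_j$ are precisely the holomorphic components of ${\rm grad}_g\phi$.

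On the other hand, expanding $Z_{\mathbb{R}} = G^i\,\partial/\partial z^i + \overline{G^i}\,\partial/\partial\overline{z}^i$ back into the real frame gives ${\rm Re}\,G^i\,\partial/\partial x^i + {\rm Im}\,G^i\,\partial/\partial x^{n+i}$, which matches the previous expression up to the constant $c$; the identical computation with \eqref{r5} handles the imaginary part and the twin metric $k$, yielding $W_{\mathbb{R}} = c^{-1}(1/|f|^2){\rm grad}_k|f|^2$. Finally I would assemble the chain: $\alpha$ is a holomorphic last multiplier for ${\rm grad}_g(\log f)$ if and only if (Theorem \ref{thlm}) $|\alpha|^2$ is a real last multiplier for both $Z_{\mathbb{R}}$ and $W_{\mathbb{R}}$, which (by invariance of the last-multiplier property under nonzero constant rescaling of the vector field) holds if and only if $|\alpha|^2$ is a real last multiplier for both $(1/|f|^2){\rm grad}_h|f|^2$ and $(1/|f|^2){\rm grad}_k|f|^2$, completing the proof. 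The role of the constant $c$ is harmless precisely because of this rescaling invariance, so its exact value (which depends on the normalization convention in \eqref{r1}) need never be pinned down.
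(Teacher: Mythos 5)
Your proposal is correct and takes essentially the same route as the paper: the paper's proof likewise reduces to Theorem \ref{thlm} by a Wirtinger-coordinate computation using \eqref{r4} and \eqref{r5} to identify $(1/|f|^2)\,{\rm grad}_h|f|^2$ and $(1/|f|^2)\,{\rm grad}_k|f|^2$ with ${\rm grad}_g(\log f)+\overline{{\rm grad}_g(\log f)}$ and $-i\bigl[{\rm grad}_g(\log f)-\overline{{\rm grad}_g(\log f)}\bigr]$, i.e.\ with $Z_{\mathbb{R}}$ and $W_{\mathbb{R}}$ (its equations \eqref{r9} and \eqref{r10}). The only difference is that the paper asserts these identities exactly while you allow an undetermined nonzero constant $c$ and invoke the rescaling invariance of the last-multiplier condition \eqref{II2} --- a harmless, and in fact prudent, refinement given the normalization conventions in \eqref{r1}--\eqref{r5}.
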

\begin{proof}
With respect to the complex coordinates $(z^1,\ldots,z^n)$ on $M$ the local form of the holomorphic gradient vector field ${\rm grad}_g(\log f)$ (corresponding to the holomorphic Riemannian metric $g$) is
\begin{equation}
\label{r6}
{\rm grad}_g(\log f)=\frac{1}{f}\sum_{i,j=1}^ng^{ij}\frac{\partial f}{\partial z^i}\frac{\partial}{\partial z^j}
\end{equation}
and with respect to real coordinates $(x^1,\ldots,x^{2n})$ the local form of the gradient vector field ${\rm grad}_h(|f|^2)$ (corresponding to the real metric $h$) is
\begin{equation}
\label{r7}
{\rm grad}_h|f|^2=\sum_{i,j=1}^nh^{ij}\frac{\partial|f|^2}{\partial x^i}\frac{\partial}{\partial x^j}+\sum_{i,j=1}^nh^{in+j}\frac{\partial|f|^2}{\partial x^i}\frac{\partial}{\partial x^{n+j}}
\end{equation}
\begin{displaymath}
\,\,\,\,\,\,\,\,\,\,\,\,\,\,\,\,\,\,\,\,\,\,\,\,\,\,\,\,\,\,\,\,\,\,\,\,\,\,\,\,\,\,\,\,\,\,\,\,+\sum_{i,j=1}^nh^{n+ij}\frac{\partial|f|^2}{\partial x^{n+i}}\frac{\partial}{\partial x^j}+\sum_{i,j=1}^nh^{n+in+j}\frac{\partial|f|^2}{\partial x^{n+i}}\frac{\partial}{\partial x^{n+j}},
\end{displaymath}
and the local form of the gradient vector field ${\rm grad}_k(|f|^2)$ (corresponding to the real metric $k$) is
\begin{equation}
\label{r8}
{\rm grad}_k|f|^2=\sum_{i,j=1}^nk^{ij}\frac{\partial|f|^2}{\partial x^i}\frac{\partial}{\partial x^j}+\sum_{i,j=1}^nk^{in+j}\frac{\partial|f|^2}{\partial x^i}\frac{\partial}{\partial x^{n+j}}
\end{equation}
\begin{displaymath}
\,\,\,\,\,\,\,\,\,\,\,\,\,\,\,\,\,\,\,\,\,\,\,\,\,\,\,\,\,\,\,\,\,\,\,\,\,\,\,\,\,\,\,\,\,\,\,\,+\sum_{i,j=1}^nk^{n+ij}\frac{\partial|f|^2}{\partial x^{n+i}}\frac{\partial}{\partial x^j}+\sum_{i,j=1}^nk^{n+in+j}\frac{\partial|f|^2}{\partial x^{n+i}}\frac{\partial}{\partial x^{n+j}}.
\end{displaymath}
Then, using \eqref{r4}, \eqref{r5} and 
\begin{displaymath}
\frac{\partial}{\partial x^k}=\frac{\partial}{\partial z^k}+\frac{\partial}{\partial \overline{z}^k}\,,\,\frac{\partial}{\partial x^{n+k}}=i\left(\frac{\partial}{\partial z^k}-\frac{\partial}{\partial \overline{z}^k}\right),
\end{displaymath}
a straightforward computation in the realations \eqref{r7} and \eqref{r8} yields
\begin{equation}
\label{r9}
\frac{1}{|f|^2}{\rm grad}_h|f|^2={\rm grad}_g(\log f)+\overline{{\rm grad}_g(\log f)}
\end{equation}
and 
\begin{equation}
\label{r10}
\frac{1}{|f|^2}{\rm grad}_k|f|^2=-i\left[{\rm grad}_g(\log f)-\overline{{\rm grad}_g(\log f)}\right].
\end{equation}
Then the proof follows by Theorem \ref{thlm}.
\end{proof}

\subsection{Inverse holomorphic multipliers}
 The relation \eqref{II2} says that if $0\neq\beta\in\mathcal{O}(M)$ satisfies the equation
\begin{equation}
\label{II3}
\mathcal{L}_Z\beta:=Z(\beta)=({\rm div}_\omega(Z))\cdot \beta,
\end{equation}
then $1/\beta$ is a holomorphic last multiplier for $Z$. Hence, $\beta\in\mathcal{O}(M)$ which satisfies \eqref{II3} will be called an \textit{inverse holomorphic multiplier} for $Z$.

Let us recall that a holomorphic vector field $S\in\mathcal{X}_\mathcal{O}(M)$ is said to be a \textit{symmetry} of $Z\in\mathcal{X}_\mathcal{O}(M)$ if there exists $\lambda\in\mathcal{O}(M)$ such that $\mathcal{L}_ZS:=[Z,S]=\lambda Z$. Consequently, if we consider $n-1$ symmetries $S_1,\ldots,S_{n-1}$ of $Z$, and we define
$\beta=\imath_{S_{n-1}}\ldots\imath_{S_1}\theta$, then $\beta$ is an inverse holomorphic multiplier for $Z$. This can be proved using the symmetry condition. Indeed
\begin{displaymath}
\mathcal{L}_Z\beta=\mathcal{L}_Z\imath_{S_{n-1}}\ldots\imath_{S_1}\theta=\left(\imath_{[Z,S_{n-1}]}+\imath_{S_{n-1}}\mathcal{L}_Z\right)\imath_{S_{n-2}}\ldots\imath_{S_1}\theta.
\end{displaymath}
The first term in the above expression vanishes, and recursively, it follows $\mathcal{L}_Z\beta=\beta\cdot {\rm div}_\omega(Z)$.

Another characterization of inverse holomorphic multiplier for $Z\in\mathcal{X}_\mathcal{O}(M)$ can be given in the following theorem which is a holomorphic version of Theorem 10 from \cite{B-G}.
\begin{theorem}
\label{ti}
Let $M$ be a $n$-dimensional complex manifold endowed with a holomorphic volume form $\omega$ and $Z\in\mathcal{X}_\mathcal{O}(M)$. If there exists a holomorphic frame field $\{Z_1,\ldots,Z_n\}$ of  $\mathcal{X}_\mathcal{O}(M)$ such that
\begin{equation}
\label{i1}
[Z,Z_i]=\sum_{k=1}^nf_i^kZ_k,
\end{equation}
 where $f_i^k\in\mathcal{O}(M)$, $i,k=1,\ldots,n$ satisfies ${\rm Tr}(f_i^k):=\sum\limits_{k=1}^nf_k^k=0$, then $\beta=\omega(Z_1,\ldots,Z_n)$ is an inverse holomorphic multiplier for $Z$.
\end{theorem}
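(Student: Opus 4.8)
The plan is to compute $\mathcal{L}_Z\beta$ directly and verify that it equals ${\rm div}_\omega(Z)\cdot\beta$, which is precisely the defining equation \eqref{II3} for an inverse holomorphic multiplier. Since $\beta=\omega(Z_1,\ldots,Z_n)$ is a holomorphic function, we have $\mathcal{L}_Z\beta=Z(\beta)$, so it suffices to establish the appropriate Leibniz-type identity and then exploit the trace-free hypothesis.

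First I would apply the Leibniz rule for the Lie derivative along $Z$ to the scalar $\omega(Z_1,\ldots,Z_n)$, writing
\begin{displaymath}
\mathcal{L}_Z\bigl(\omega(Z_1,\ldots,Z_n)\bigr)=(\mathcal{L}_Z\omega)(Z_1,\ldots,Z_n)+\sum_{i=1}^n\omega(Z_1,\ldots,\mathcal{L}_ZZ_i,\ldots,Z_n).
\end{displaymath}
For the first term on the right I would invoke \eqref{I1}, which gives $\mathcal{L}_Z\omega={\rm div}_\omega(Z)\cdot\omega$, so that this contribution is exactly ${\rm div}_\omega(Z)\cdot\beta$. For each summand in the second term I would substitute the structure relation \eqref{i1}, namely $\mathcal{L}_ZZ_i=[Z,Z_i]=\sum_{k=1}^nf_i^kZ_k$, producing a double sum $\sum_{i,k}f_i^k\,\omega(Z_1,\ldots,Z_k,\ldots,Z_n)$ in which $Z_k$ is inserted in the $i$-th slot.

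The decisive step is the antisymmetry of $\omega$: whenever $k\neq i$ the argument list contains the entry $Z_k$ twice (it already occupies the $k$-th slot), so that term vanishes; only the diagonal terms $k=i$ survive, each equal to $\omega(Z_1,\ldots,Z_n)=\beta$. Hence the second term collapses to $\bigl(\sum_{i=1}^nf_i^i\bigr)\beta={\rm Tr}(f_i^k)\,\beta$, which is zero by hypothesis. Combining the two contributions yields $Z(\beta)={\rm div}_\omega(Z)\cdot\beta$, which is \eqref{II3}, so $\beta$ is an inverse holomorphic multiplier for $Z$.

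I do not anticipate a genuine obstacle here, as the argument is entirely formal. The only point demanding care is the bookkeeping in the antisymmetry collapse that reduces the double sum to the trace; correctly tracking which slot each $Z_k$ lands in, and confirming that precisely the diagonal survives, is where an error could creep in. Everything else follows mechanically from the Leibniz rule and the divergence identity \eqref{I1} already recorded in the preliminaries.
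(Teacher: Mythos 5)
Your proof is correct and is precisely the direct computation that the paper's own (one-line) proof alludes to, which merely says the result ``follows by direct computation involving the formula of Lie derivative and \eqref{I1}''. Your Leibniz-rule expansion of $\mathcal{L}_Z\bigl(\omega(Z_1,\ldots,Z_n)\bigr)$, the antisymmetry collapse of the double sum to the diagonal, and the use of the trace-free hypothesis fill in exactly the intended argument, yielding the defining relation \eqref{II3} for an inverse holomorphic multiplier.
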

\begin{proof}
It follows by direct computation involving the formula of Lie derivative and \eqref{I1}.
\end{proof}
Also, we have
\begin{corollary}
Let $M$ be an $n$-dimensional complex manifold endowed with a holomorphic volume form $\omega$ and $\{Z_1,\ldots,Z_n\}$ a holomorphic frame field of  $\mathcal{X}_\mathcal{O}(M)$ such that $[Z_i,Z_j]=\sum\limits_{k=1}^nf_{ij}^kZ_k$, where $f_{ij}^k\in\mathcal{O}(M)$. If there exist $g_k\in\mathcal{O}(M)$, $k=1,\ldots,n$ such that
\begin{equation}
\label{i2}
\sum_{k=1}^n\left(\sum_{i=1}^ng_if_{ik}^k-Z_k(g_k)\right)=0,
\end{equation}
then $\beta=\omega(Z_1,\ldots,Z_n)$ is an inverse holomorphic multiplier for $Z=\sum\limits_{k=1}^ng_kZ_k$.
\end{corollary}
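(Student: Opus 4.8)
The plan is to deduce this directly from Theorem \ref{ti}: I would compute the structure functions of the bracket $[Z,Z_j]$ relative to the given holomorphic frame and check that the trace-free hypothesis of Theorem \ref{ti} reduces exactly to the condition \eqref{i2}. Since $\beta=\omega(Z_1,\ldots,Z_n)$ is already the candidate multiplier appearing in Theorem \ref{ti}, the whole task is to produce the commutation relations \eqref{i1} for the specific vector field $Z=\sum_k g_k Z_k$ and then read off the trace.

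First I would expand $Z=\sum_{i=1}^n g_i Z_i$ and use the Leibniz property of the Lie bracket in the first slot, $[g_i Z_i,Z_j]=g_i[Z_i,Z_j]-Z_j(g_i)Z_i$, to obtain
\[
[Z,Z_j]=\sum_{i=1}^n g_i[Z_i,Z_j]-\sum_{i=1}^n Z_j(g_i)Z_i .
\]
Substituting the hypothesis $[Z_i,Z_j]=\sum_{k=1}^n f_{ij}^k Z_k$ and collecting the coefficient of each frame field $Z_k$ gives $[Z,Z_j]=\sum_{k=1}^n F_j^k Z_k$ with
\[
F_j^k=\sum_{i=1}^n g_i f_{ij}^k-Z_j(g_k).
\]
This places $Z$ in precisely the form required by \eqref{i1}, with the holomorphic functions $F_j^k$ playing the role of the $f_j^k$ there.

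It then remains to verify the trace condition ${\rm Tr}(F_j^k)=\sum_{k=1}^n F_k^k=0$. Computing, $\sum_{k=1}^n F_k^k=\sum_{k=1}^n\big(\sum_{i=1}^n g_i f_{ik}^k-Z_k(g_k)\big)$, which is exactly the left-hand side of \eqref{i2} and hence vanishes by hypothesis. Consequently Theorem \ref{ti} applies to the vector field $Z=\sum_{k=1}^n g_k Z_k$ and the frame $\{Z_1,\ldots,Z_n\}$, yielding that $\beta=\omega(Z_1,\ldots,Z_n)$ is an inverse holomorphic multiplier for $Z$, as claimed.

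The argument is entirely routine; the only point demanding care is the sign in the Leibniz rule $[fX,Y]=f[X,Y]-Y(f)X$, which is what produces the $-Z_j(g_k)$ term and hence the minus sign in front of $Z_k(g_k)$ in \eqref{i2}. I do not anticipate any genuine obstacle, since the content is simply that the trace of the adjoint action of $Z$ on the frame, written through the structure functions $f_{ij}^k$ and the coefficients $g_k$, is forced to vanish by \eqref{i2}.
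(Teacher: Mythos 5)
Your proposal is correct and follows exactly the paper's own route: the paper's proof consists precisely of the computation $[Z,Z_j]=\sum_{k=1}^n\bigl(\sum_{i=1}^n g_i f_{ij}^k - Z_j(g_k)\bigr)Z_k$ followed by an appeal to Theorem \ref{ti}, which is what you carry out, with the trace verification made explicit. Your remark about the sign in the Leibniz rule $[fX,Y]=f[X,Y]-Y(f)X$ is the one genuine point of care, and you handle it correctly.
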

\begin{proof}
By direct computation, we get
\begin{displaymath}
[Z,Z_j]=\sum_{k=1}^n\left(\sum_{i=1}^ng_if_{ij}^k-Z_j(g_k)\right)Z_k
\end{displaymath}
and then the result follows from Theorem \ref{ti}.
\end{proof}
\begin{example}
Let us consider the standard holomorphic volume form $\omega=dz^1\wedge dz^2\wedge dz^3$ on $\mathbb{C}^3$ and the holomorphic vector field $Z=\sum\limits_{i=1}^3Z^i(\partial/\partial z^i)$ on $\mathbb{C}^3$, where
\begin{equation}
\label{i3}
Z^i=\sum_{j=1}^3a_{ij}z^iz^j,\,a_{ij}\in\mathbb{C},\,i=1,2,3.
\end{equation}
Using  Theorem \ref{ti}, we describe a method to obtain an inverse holomorphic multiplier for the holomorphic vector field $Z$. We choose  three holomorphic vector fields on $\mathbb{C}^3$ given by $Z_i=(z^i)^{c_i}(\partial/\partial z^i)$, $i=1,2,3$, where $c_i\in\mathbb{C}$. By direct computation, we obtain $[Z,Z_i]=\sum\limits_{k=1}^3f_i^kZ_k$, where $f_i^k(z)=0$ for $i\neq k$, and
\begin{displaymath}
f_i^i(z)=(c_i-1)\sum_{j=1}^3a_{ij}z^j-a_{ii}z^i,\,i=1,2,3.
\end{displaymath}
Similar computations as in \cite{B-G} imply that $\sum\limits_{i=1}^3f_i^i(z)=0$ if
\begin{displaymath}
c_i=1+\frac{\Delta_i}{\Delta},\,i=1,2,3
\end{displaymath}
where $\Delta=\det(a_{ij})$ and
\begin{displaymath}
\Delta_1=\det\left(\begin{array}{cccccc}a_{11} & a_{21} & a_{31}  \\
a_{22} & a_{22} & a_{32} \\
a_{33} & a_{23} &  a_{33}
\end{array}\right)\,,\,\Delta_2=\det\left(\begin{array}{cccccc}a_{11} & a_{11} & a_{31}  \\
a_{12} & a_{22} & a_{32} \\
a_{13} & a_{33} &  a_{33}
\end{array}\right)\,,\,\Delta_3=\det\left(\begin{array}{cccccc}a_{11} & a_{21} & a_{11}  \\
a_{12} & a_{22} & a_{22} \\
a_{13} & a_{23} &  a_{33}
\end{array}\right).
\end{displaymath}
Then, the holomorphic function $\beta=\omega(Z_1,Z_2,Z_3)=(z^1)^{c_1}(z^2)^{c_2}(z^3)^{c_3}$ is an inverse holomorphic multiplier for $Z$.

As an application of Theorem \ref{thlm} we obtain that 
\begin{displaymath}
\frac{1}{|\beta|^2}=\frac{1}{|(z^1)^{c_1}(z^2)^{c_2}(z^3)^{c_3}|^2}
\end{displaymath}
is an Jacobi integrating factor for the following six dimensional real ODE system
\begin{equation}
\label{erealsys}
\left\{
\begin{array}{ll}
\frac{dx^i}{dt}=\sum\limits_{j=1}^3\left[p_{ij}(x^ix^j-y^iy^j)-q_{ij}(x^iy^j+y^ix^j)\right]&  \\
 &  \\
\frac{dy^{i}}{dt}=\sum\limits_{j=1}^3\left[p_{ij}(x^iy^j+y^ix^j)+q_{ij}(x^ix^j-y^iy^j)]\right]&
\end{array}
\right.,\,i=1,2,3,
\end{equation}
where $p_{ij}={\rm Re}\,a_{ij}$ and $q_{ij}={\rm Im}\,a_{ij}$.
\end{example}
We end our discussion concerning inverse holomorphic multipliers on complex manifolds with the following proposition.
\begin{proposition}
Let $(M_i,\omega_i)$, $i=1,2$ be two complex manifolds of complex dimensions $n_1$ and $n_2$, respectively, endowed with the holomorphic volume forms $\omega_1$ and $\omega_2$, respectively. Then $M:=M_1\times M_2$ is a $(n_1+n_2)$-dimensional complex manifold endowed with the holomorphic volume form $\omega:=\omega_1\wedge\omega_2$. If $\beta_1\in\mathcal{O}(M_1)$ is an inverse holomorphic multiplier for $Z_1\in\mathcal{X}_\mathcal{O}(M_1)$ and $\beta_2\in\mathcal{O}(M_2)$ is an inverse holomorphic multiplier for $Z_2\in\mathcal{X}_\mathcal{O}(M_2)$ then $\beta:=\beta_1\cdot\beta_2\in\mathcal{O}(M)$ is an inverse holomorphic multiplier for $Z=Z_1+Z_2\in\mathcal{X}_\mathcal{O}(M)$.
\end{proposition}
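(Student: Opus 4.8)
The plan is to verify the defining equation \eqref{II3} directly, namely that $Z(\beta)={\rm div}_\omega(Z)\cdot\beta$ holds with $Z=Z_1+Z_2$, $\beta=\beta_1\beta_2$ and $\omega=\omega_1\wedge\omega_2$. Everything rests on the product structure of $M=M_1\times M_2$: after pulling back along the two projections, $\beta_1$ and $\omega_1$ depend only on the $M_1$-coordinates while $\beta_2$ and $\omega_2$ depend only on the $M_2$-coordinates, and the lifted field $Z_1$ is tangent to the $M_1$-factor while $Z_2$ is tangent to the $M_2$-factor. The two basic consequences I would record first are $Z_1(\beta_2)=Z_2(\beta_1)=0$ and $\mathcal{L}_{Z_1}\omega_2=\mathcal{L}_{Z_2}\omega_1=0$; both follow from Cartan's formula since, for instance, $Z_1$ projects to $0$ under the second projection, so $\imath_{Z_1}\omega_2=0$ and $\imath_{Z_1}\partial\omega_2=0$.

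First I would compute the divergence of $Z$ with respect to $\omega$. Using $\mathcal{L}_{Z_1+Z_2}=\mathcal{L}_{Z_1}+\mathcal{L}_{Z_2}$ together with the derivation rule $\mathcal{L}_{Z_i}(\omega_1\wedge\omega_2)=(\mathcal{L}_{Z_i}\omega_1)\wedge\omega_2+\omega_1\wedge(\mathcal{L}_{Z_i}\omega_2)$ and the vanishing noted above, the cross terms drop out and one obtains $\mathcal{L}_Z(\omega_1\wedge\omega_2)=\left({\rm div}_{\omega_1}(Z_1)+{\rm div}_{\omega_2}(Z_2)\right)\omega_1\wedge\omega_2$. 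By the definition \eqref{I1} this yields the additivity ${\rm div}_\omega(Z)={\rm div}_{\omega_1}(Z_1)+{\rm div}_{\omega_2}(Z_2)$.

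Next I would expand $Z(\beta)$. Since each $Z_i$ is a derivation, $Z(\beta_1\beta_2)=Z_1(\beta_1)\beta_2+\beta_1 Z_1(\beta_2)+Z_2(\beta_1)\beta_2+\beta_1 Z_2(\beta_2)$, and the two mixed terms vanish because $Z_1(\beta_2)=Z_2(\beta_1)=0$. Invoking the hypotheses that $\beta_i$ is an inverse holomorphic multiplier for $Z_i$, i.e. $Z_i(\beta_i)={\rm div}_{\omega_i}(Z_i)\beta_i$, this collapses to $Z(\beta)=\left({\rm div}_{\omega_1}(Z_1)+{\rm div}_{\omega_2}(Z_2)\right)\beta_1\beta_2$. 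Comparing with the divergence formula from the previous step gives exactly $Z(\beta)={\rm div}_\omega(Z)\cdot\beta$, which is \eqref{II3}.

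The argument is essentially a bookkeeping of the product structure, so there is no serious analytic obstacle; the one point deserving care is the justification that the pulled-back objects are genuinely independent of the complementary factor, so that the mixed terms $Z_1(\beta_2)$, $Z_2(\beta_1)$ and the Lie derivatives $\mathcal{L}_{Z_1}\omega_2$, $\mathcal{L}_{Z_2}\omega_1$ vanish identically. In effect the crux is the additivity of the divergence under the exterior product of volume forms on a product manifold, and once that is stated cleanly the rest is an immediate application of the defining relation \eqref{II3}.
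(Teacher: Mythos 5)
Your proposal is correct and follows essentially the same route as the paper's own proof: expand $Z(\beta)$ via the derivation property and the hypothesis \eqref{II3}, and establish ${\rm div}_\omega(Z)={\rm div}_{\omega_1}(Z_1)+{\rm div}_{\omega_2}(Z_2)$ from $\mathcal{L}_Z(\omega_1\wedge\omega_2)$. You merely make explicit the vanishing of the cross terms $Z_1(\beta_2)$, $Z_2(\beta_1)$, $\mathcal{L}_{Z_1}\omega_2$, $\mathcal{L}_{Z_2}\omega_1$, which the paper uses tacitly.
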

\begin{proof}
According to our hypothesis, and using \eqref{II3}, we have
\begin{eqnarray*}
Z(\beta)&=&\beta_2\cdot Z_1(\beta_1)+\beta_1\cdot Z_2(\beta_2)\\
&=&\beta_2\cdot\beta_1\cdot {\rm div}_{\omega_1}(Z_1)+\beta_1\cdot\beta_2\cdot {\rm div}_{\omega_2}(Z_2).
\end{eqnarray*}
On the other hand, we have
\begin{eqnarray*}
\beta\cdot{\rm div}_{\omega}(Z)\omega&=&\beta\left(\mathcal{L}_{Z_1}\omega_1\wedge\omega_2+\omega_1\wedge\mathcal{L}_{Z_2}\omega_2\right)\\
&=& \beta({\rm div}_{\omega_1}(Z_1)+{\rm div}_{\omega_2}(Z_2))\omega
\end{eqnarray*}
which end the proof.
\end{proof}

\section{Holomorphic last multipliers on holomorphic Poisson manifolds}
\setcounter{equation}{0}
In this section, we consider the study of holomorphic last multipliers in the framework of holomorphic Poisson manifolds. The particular class of holomorphic Hamiltonian vector fields associated with holomorphic Poisson structures with a special view towards the unimodular case and some examples are considered. Also, we relate the holomorphic last multipliers for holomorphic Hamiltonian vector fields with real last multipliers for Hamiltonian vector fields associated to natural real Poisson structures on the underlying real manifold. Next, we extend holomorphic last multipliers from holomorphic vector fields to holomorphic multivectors and we study carefully the Poisson bivector as a remarkable example. Also, some examples are considered to ilustrate our theory.

\subsection{Holomorphic Poisson structures}
Holomorphic Poisson structures appear naturally in many places, \cite{Ba, Ev-Lu}. For instance, any semi-simple complex Lie group admits a natural Poisson group structure, which is holomorphic. Its dual is also a holomorphic Poisson group. Indeed,
one of the simplest types of examples of holomorphic Poisson manifolds are the Lie-Poisson structures on the dual of complex Lie algebras. We notice that a study of holomorphic Poisson structures on complex manifolds was initiated in \cite{P}. Here, following \cite{L-P-V, L-S-X, S} we briefly recall some basic notions concerning these structures. A \textit{holomorphic Poisson manifold} is a complex manifold $M$ whose sheaf of holomorphic functions $\mathcal{O}(M)$ is a sheaf of Poisson algebras. By a sheaf of Poisson algebras over $M$ we mean that, for each open subset $U\subset M$, the ring $\mathcal{O}(U)$ is endowed with a Poisson bracket such that all restriction maps $\mathcal{O}(U)\rightarrow \mathcal{O}(V)$ (for arbitrary open subsets $V\subset U\subset M$) are morphisms of Poisson algebras. Moreover, given an open subset $U\subset M$, an open covering $\{U_i\},\,i\in I$ of $U$, and a pair of functions $f,g\in\mathcal{O}(U)$, the local data $\{f|_{U_i}, g|_{U_i}\},\,i\in I$ glue up and give $\{f|_U, g|_U\}$ if they coincide on the overlaps $U_i\cap U_j$. On a given complex manifold $M$, holomorphic Poisson structures are in one-to-one correspondence with sections $P\in\Gamma\left(\bigwedge^2T^{1,0}M\right)$ such that $\overline{\partial}P=0$ and $[P,P] = 0$. Here $[\cdot,\cdot]$ is the Schouten-Nijenhuis-Poisson bracket (for holomorphic tensor fields, see for instance \cite{Ib}). The Poisson bracket on functions and bivector field are related by the formula $P(\partial f,\partial g)=\{f,g\}$, where $f,g\in\mathcal{O}(M)$. Also, for $f\in\mathcal{O}(M)$ the operator   $Z_f:\mathcal{O}(M)\rightarrow \mathcal{O}(M)\,:\,g\mapsto Z_fg=\{f,g\}$ defines a derivation on $\mathcal{O}(M)$, i.e., it is a holomorphic vector field on $M$, called the \textit{holomorphic Hamiltonian vector field} of $f$, and satisfies $Z_f=\imath_{\partial f}P$, see for instance \cite{B-Z}.

Let $(M,P)$ be a holomorphic Poisson manifold and $Z_f$ the holomorphic Hamiltonian vector field for $f\in\mathcal{O}(M)$. Assume that  $M$ admits a holomorphic volume form $\omega$. Then the operator
\begin{displaymath}
Z_\omega:f\in\mathcal{O}(M)\mapsto {\rm div}_\omega(Z_f)\in\mathcal{O}(M)
\end{displaymath}
is a derivation on $\mathcal{O}(M)$, i.e., it is a holomorphic vector field on $M$, called the \textit{holomorphic modular vector field} of $(M,P,\omega)$.

Let us denote by $\mathcal{V}_\mathcal{O}^p(M)$ the space of holomorphic $p$-vector fields on $M$, i.e., skew symmetric contravariant holomorphic tensor fields of type $(p,0)$ on $M$. The Lichnerowicz-Poisson coboundary operator on a holomorphic Poisson manifold $(M,P)$ is defined by
\begin{displaymath}
\sigma:=[P,\cdot]:\mathcal{V}_\mathcal{O}^p(M)\rightarrow\mathcal{V}_\mathcal{O}^{p+1}(M)
\end{displaymath}
where $[\cdot,\cdot]$ is the Schouten-Nijenhuis bracket, and the holomorphic Lichnerowicz-Poisson cohomology (HLP) of $(M,P)$ is defined as the cohomology of the complex $(\mathcal{V}_\mathcal{O}^\bullet(M),\sigma)$, see for instance \cite{C-F-I-U}.
Then, for a holomorphic modular vector field one has $\sigma(Z_\omega)=0$, so it defines an $1$-dimensional HLP-cohomology class $[Z_\omega]\in H^1_{\mathcal{O},LP}(M,P)$. It is easy to see that this class does not depend on the holomorphic volume form $\omega$. It is called the \textit{holomorphic modular class} of the holomorphic Poisson manifold $(M,P)$.

The holomorphic Poisson manifold $(M,P,\omega)$ is called \textit{unimodular}, see \cite{S}, if $Z_\omega$ is the holomorphic Hamiltonian vector field $Z_h$ of a given $h\in\mathcal{O}(M)$.

\subsection{Last multipliers for holomorphic Hamiltonian vector fields}

In what follows, we consider  $(M,P,\omega)$ a holomorphic Poisson manifold endowed with a holomorphic volume form $\omega$. If $\alpha\in\mathcal{O}(M)$ is a holomorphic last multiplier for the holomorphic Hamiltonian vector field $Z_f$, then from \eqref{II2} it results
\begin{displaymath}
0=Z_f(\alpha)+\alpha Z_\omega(f)=-Z_\alpha(f)+\alpha Z_\omega(f)
\end{displaymath}
which leads to
\begin{proposition}
\label{pIII1}
Let $f\in\mathcal{O}(M)$. Then $\alpha\in\mathcal{O}(M)$ is a holomorphic last multiplier for the holomorphic Hamiltonian vector field $Z_f$ if and only if $f\in I^1_\mathcal{O}(\alpha Z_\omega-Z_\alpha)$, where $Z_\alpha$ is the holomorphic Hamiltonian vector field of $\alpha$. In the case when $(M,P,\omega)$ is unimodular, then $\alpha$ is a holomorphic last multiplier for $Z_f$ if and only if $\alpha\{h,f\}=\{\alpha,f\}$.
\end{proposition}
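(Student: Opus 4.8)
The plan is to specialize the fundamental last-multiplier criterion \eqref{II2} to the Hamiltonian vector field $Z_f$ and then translate each term into the Poisson-bracket language. By \eqref{II2}, the function $\alpha$ is a holomorphic last multiplier for $Z_f$ precisely when
\begin{displaymath}
Z_f(\alpha)+\alpha\,{\rm div}_\omega(Z_f)=0.
\end{displaymath}
First I would rewrite the divergence term using the defining property of the holomorphic modular vector field, namely ${\rm div}_\omega(Z_f)=Z_\omega(f)$, which is exactly how $Z_\omega$ was introduced above. Next I would rewrite the first term using the antisymmetry of the Poisson bracket: since $Z_f(\alpha)=\{f,\alpha\}=-\{\alpha,f\}=-Z_\alpha(f)$, the criterion becomes $-Z_\alpha(f)+\alpha Z_\omega(f)=0$, i.e. $(\alpha Z_\omega-Z_\alpha)(f)=0$. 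By the very definition of the set of holomorphic first integrals, this last equation is equivalent to $f\in I^1_\mathcal{O}(\alpha Z_\omega-Z_\alpha)$, proving the first equivalence.

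For the second claim I would simply feed the unimodularity hypothesis $Z_\omega=Z_h$ into the identity just obtained. Then $Z_\omega(f)=Z_h(f)=\{h,f\}$ and $Z_\alpha(f)=\{\alpha,f\}$, so the condition $-Z_\alpha(f)+\alpha Z_\omega(f)=0$ reads $-\{\alpha,f\}+\alpha\{h,f\}=0$, that is $\alpha\{h,f\}=\{\alpha,f\}$, as asserted.

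I do not expect any genuine obstacle here: the argument is a direct chain of substitutions resting on \eqref{II2}, the definition of $Z_\omega$, and the antisymmetry of $P$. The only points requiring a little care are the sign conventions—keeping track of $\{f,\alpha\}=-\{\alpha,f\}$ so that $Z_f(\alpha)$ converts correctly into $-Z_\alpha(f)$—and recalling that membership in $I^1_\mathcal{O}(V)$ means annihilation by the vector field $V$, so that $(\alpha Z_\omega-Z_\alpha)(f)=0$ is literally the statement $f\in I^1_\mathcal{O}(\alpha Z_\omega-Z_\alpha)$.
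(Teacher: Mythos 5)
Your proposal is correct and follows essentially the same route as the paper: the authors likewise substitute $Z_f$ into the criterion \eqref{II2}, use the defining property ${\rm div}_\omega(Z_f)=Z_\omega(f)$ of the modular vector field together with the antisymmetry identity $Z_f(\alpha)=-Z_\alpha(f)$ to obtain $-Z_\alpha(f)+\alpha Z_\omega(f)=0$, and read off both the first-integral reformulation and, via $Z_\omega=Z_h$, the unimodular bracket identity. No gaps; your attention to the sign in $\{f,\alpha\}=-\{\alpha,f\}$ matches exactly the one nontrivial point of the paper's computation.
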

Taking into account that $f\in I^1_\mathcal{O}(Z_f)$ we get
\begin{corollary}
\label{cIII1}
Let $f\in\mathcal{O}(M)$. Then $f$ is a holomorphic last multiplier for $Z_f$ if and only if $f\in I^1_\mathcal{O}(Z_\omega)$. In the unimodular case, $f$ is a holomorphic last multiplier for $Z_f$ if and only if $\{h,f\}=0$.
\end{corollary}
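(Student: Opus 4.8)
The plan is to specialize the general criterion \eqref{II2} --- equivalently Proposition \ref{pIII1} --- to the case $\alpha = f$, and to exploit the fact recorded just above the statement that a function is always a holomorphic first integral of its own Hamiltonian vector field: $Z_f(f) = \{f,f\} = 0$ by skew-symmetry of the Poisson bracket.

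First I would write down \eqref{II2} for $Z = Z_f$ with multiplier $\alpha = f$. By the very definition of the holomorphic modular vector field, ${\rm div}_\omega(Z_f) = Z_\omega(f)$, so the last-multiplier condition becomes
\[
Z_f(f) + f\cdot Z_\omega(f) = 0.
\]
Since $Z_f(f) = 0$, this collapses to the single scalar identity $f\cdot Z_\omega(f) = 0$ in $\mathcal{O}(M)$.

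It then remains to read off the equivalence between $f\cdot Z_\omega(f) = 0$ and $Z_\omega(f) = 0$, i.e. $f\in I^1_\mathcal{O}(Z_\omega)$. The implication from $Z_\omega(f)=0$ is immediate. For the converse I would use that, on a connected $M$, the ring $\mathcal{O}(M)$ is an integral domain (the zero locus of a nonzero holomorphic function is a proper, nowhere-dense analytic subset); then $f\cdot Z_\omega(f) = 0$ forces either $f\equiv 0$, in which case $Z_\omega(f)=0$ trivially, or $f\not\equiv 0$, in which case $Z_\omega(f) = 0$. In all cases $f\in I^1_\mathcal{O}(Z_\omega)$, which is the first assertion. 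I expect this cancellation of the factor $f$ to be the only genuinely delicate point; everything else is a direct substitution.

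Finally, in the unimodular case the hypothesis is that $Z_\omega = Z_h$ for some $h\in\mathcal{O}(M)$, so $Z_\omega(f) = Z_h(f) = \{h,f\}$. Substituting this into the condition $Z_\omega(f) = 0$ just obtained yields the equivalence with $\{h,f\} = 0$, which is the second assertion.
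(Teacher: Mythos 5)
Your proof is correct and follows essentially the same route as the paper: the paper obtains the corollary by setting $\alpha=f$ in Proposition \ref{pIII1} (itself a restatement of \eqref{II2} using ${\rm div}_\omega(Z_f)=Z_\omega(f)$) and invoking $Z_f(f)=\{f,f\}=0$, which collapses the criterion to $f\cdot Z_\omega(f)=0$. Your only addition is the explicit integral-domain argument justifying the cancellation of the factor $f$ --- a point the paper passes over silently, tacitly assuming multipliers are nonzero as it does elsewhere (e.g.\ the step ``Since $\alpha\neq0$'' in the proof of Theorem \ref{thlm}) --- which is a welcome bit of rigor but not a different approach.
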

The Jacobi and Leibnitz formulas of the holomorphic Poisson bracket $\{\cdot,\cdot\}$ imply
\begin{proposition}
Let $(M,P,\omega)$ be an unimodular holomorphic Poisson manifold. Then the set of all holomorphic last multipliers for the holomorphic Hamiltonian vector field $Z_f$ of $f\in\mathcal{O}(M)$ is a Poisson subalgebra of $(\mathcal{O}(M),\{\cdot,\cdot\})$.
\end{proposition}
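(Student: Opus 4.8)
The plan is to push everything through the unimodular characterization of Proposition~\ref{pIII1}. Since $(M,P,\omega)$ is unimodular we have $Z_\omega=Z_h$ for a fixed $h\in\mathcal{O}(M)$, and a holomorphic function $\alpha$ is a last multiplier for $Z_f$ if and only if
\[
\{\alpha,f\}=\alpha\{h,f\}.
\]
Write $S$ for the set of such $\alpha$ and abbreviate $c:=\{h,f\}\in\mathcal{O}(M)$, which is a fixed holomorphic function because both $f$ and $h$ are fixed; thus the membership test for $S$ is simply $\{\alpha,f\}=c\,\alpha$. A Poisson subalgebra must be stable under three operations, namely $\mathbb{C}$-linear combination, pointwise product, and the Poisson bracket, so I would verify each in turn. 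Stability under linear combinations is immediate from the bilinearity of $\{\cdot,\cdot\}$, so the content lies in the product and the bracket.

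For the pointwise product I would apply the Leibniz rule to $\{\alpha\beta,f\}$ and substitute the two defining relations $\{\alpha,f\}=c\alpha$ and $\{\beta,f\}=c\beta$, the goal being to recognise the outcome as $c\cdot(\alpha\beta)$ so that $\alpha\beta\in S$. For the bracket I would apply the Jacobi identity to $\{\{\alpha,\beta\},f\}$, rewriting it through the derivation property of the Hamiltonian field $\{f,\cdot\}$ (namely $\{f,\{\alpha,\beta\}\}=\{\{f,\alpha\},\beta\}+\{\alpha,\{f,\beta\}\}$), and again substitute the two defining relations, aiming to identify the result as $c\cdot\{\alpha,\beta\}$ so that $\{\alpha,\beta\}\in S$. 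These are exactly the two structural identities the statement has in mind when it refers to ``the Jacobi and Leibnitz formulas of the holomorphic Poisson bracket''.

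The main obstacle is the bookkeeping in these two substitutions. After inserting $\{\alpha,f\}=c\alpha$ and $\{\beta,f\}=c\beta$, the Leibniz and Jacobi expansions do not reduce to $c\cdot(\alpha\beta)$ and $c\cdot\{\alpha,\beta\}$ by purely formal cancellation: besides the expected terms one also meets numerical multiples of $c\,\alpha\beta$ and $c\,\{\alpha,\beta\}$ as well as contributions carrying the derivatives $\{\alpha,c\}$ and $\{\beta,c\}$ of the cofactor. The heart of the argument is to show that these extra contributions reorganise so that the defining relation is reproduced, and this is precisely the step at which the unimodular hypothesis has to do its work, through the fact that the cofactor is the specific function $c=\{h,f\}=Z_\omega(f)$ with $Z_\omega$ a holomorphic Poisson vector field ($\sigma(Z_\omega)=0$, i.e.\ $\mathcal{L}_{Z_\omega}P=0$). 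As a guide I would first treat the degenerate case $\{h,f\}=0$ (equivalently $f\in I^1_\mathcal{O}(Z_\omega)$), where by Corollary~\ref{cIII1} the set $S$ coincides with the first-integral algebra $I^1_\mathcal{O}(Z_f)$; there both closures follow at once from Leibniz and Jacobi, and this special case isolates exactly which terms the cofactor must control in the general computation.
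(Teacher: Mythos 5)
Your reduction to the membership test $\{\alpha,f\}=c\,\alpha$ with $c:=\{h,f\}$ is exactly right, but the step you defer --- that ``these extra contributions reorganise so that the defining relation is reproduced'' --- is precisely where the argument breaks, and no reorganisation is possible. The Leibniz rule gives, exactly,
\[
\{\alpha\beta,f\}=\alpha\{\beta,f\}+\beta\{\alpha,f\}=2c\,\alpha\beta,
\]
whereas membership of $\alpha\beta$ demands $c\,\alpha\beta$; so closure under products forces $c\,\alpha\beta=0$. Likewise the Jacobi identity gives
\[
\{\{\alpha,\beta\},f\}=\{\alpha,\{\beta,f\}\}-\{\beta,\{\alpha,f\}\}
=2c\,\{\alpha,\beta\}+\beta\{\alpha,c\}-\alpha\{\beta,c\},
\]
and nothing cancels the surplus. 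The unimodular hypothesis cannot rescue this: it was already fully spent in producing the criterion of Proposition \ref{pIII1}, and the cocycle condition $\sigma(Z_\omega)=0$ you invoke holds for \emph{every} modular vector field, unimodular or not, so it imposes no further relation among $\alpha$, $\beta$ and $c$. Note also that $1$ belongs to the set if and only if $c=0$, and that for a nowhere-vanishing multiplier $\alpha$ one gets $\{\alpha^2,f\}=2c\,\alpha^2\neq c\,\alpha^2$ whenever $c\not\equiv0$ (e.g.\ $P=z^1\,\partial_{z^1}\wedge\partial_{z^2}$ on a suitable domain, $h=-\log z^1$, $f=z^2$, $\alpha=1/z^1$). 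So, read as a statement about the multipliers of a fixed $Z_f$ with $\{h,f\}\neq0$, the proposition is false and your plan cannot be completed; this is a genuine gap, not bookkeeping.

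Your ``degenerate case'' is in fact the whole truth, and it is also all the paper establishes: the paper's entire proof is the single appeal to ``the Jacobi and Leibnitz formulas,'' and those two formulas do give a complete two-line proof exactly when the cofactor vanishes. If $c=\{h,f\}=0$, the set is $I^1_\mathcal{O}(Z_f)=\{\alpha\,:\,\{\alpha,f\}=0\}$, and Leibniz and Jacobi immediately yield closure under product and bracket. The reading that makes the statement correct as written --- and the one consistent with the preceding Corollary \ref{cIII1} --- is that the set in question is $\{f\in\mathcal{O}(M)\,:\,f\ \text{is a holomorphic last multiplier for its own}\ Z_f\}$, which by Corollary \ref{cIII1} equals the Poisson centralizer $\{f\,:\,\{h,f\}=0\}$ of $h$; Leibniz and Jacobi applied to the derivation $\{h,\cdot\}$ then give the Poisson subalgebra property at once. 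You should either prove that version, or add the hypothesis $f\in I^1_\mathcal{O}(Z_\omega)$ to your fixed-$f$ version; as it stands, the deferred ``heart of the argument'' does not exist.
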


Another consequence of the Proposition \ref{pIII1} is
\begin{corollary}
If $\alpha\in\mathcal{O}(M)$ is a holomorphic last multiplier for the holomorphic Hamiltonian vector fields $Z_f$ and $Z_g$ of $f,g\in\mathcal{O}(M)$ then $\alpha$ is a holomorphic last multiplier for $Z_{fg}$. Then, if $\alpha$ is a holomorphic last multiplier for $Z_f$ then $\alpha$ is a holomorphic last multiplier for $Z_{f^r}$, $r\in\mathbb{N}^*$.
\end{corollary}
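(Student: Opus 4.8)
The plan is to reduce everything to Proposition \ref{pIII1}, which characterizes the last-multiplier property of $\alpha$ for $Z_f$ as the single scalar condition $f\in I^1_\mathcal{O}(\alpha Z_\omega-Z_\alpha)$. The key observation I would exploit is that, once $\alpha$ is fixed, the combination $Y:=\alpha Z_\omega-Z_\alpha$ is one and the same holomorphic vector field governing the conditions for every Hamiltonian $Z_f$; it depends only on $\alpha$ and on the data $(P,\omega)$, not on $f$ or $g$. Under this translation, the hypotheses that $\alpha$ is a last multiplier for both $Z_f$ and $Z_g$ become simply $Y(f)=0$ and $Y(g)=0$.

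First I would recall that $Y$ acts on $\mathcal{O}(M)$ as a derivation, so its set of first integrals $I^1_\mathcal{O}(Y)$ is a subalgebra of $\mathcal{O}(M)$ (this is the same fact already invoked in remark (ii) above): the Leibniz rule gives $Y(fg)=g\,Y(f)+f\,Y(g)$, which vanishes as soon as $Y(f)=Y(g)=0$. Thus $fg\in I^1_\mathcal{O}(Y)=I^1_\mathcal{O}(\alpha Z_\omega-Z_\alpha)$, and applying Proposition \ref{pIII1} in the reverse direction yields that $\alpha$ is a holomorphic last multiplier for $Z_{fg}$.

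For the second assertion I would simply iterate the first one. Taking $g=f$ shows that $\alpha$ is a last multiplier for $Z_{f^2}$; assuming it is a last multiplier for $Z_{f^{r-1}}$, i.e. $Y(f^{r-1})=0$, the same product argument with $g=f^{r-1}$ gives $Y(f^r)=0$, so by induction $\alpha$ is a holomorphic last multiplier for $Z_{f^r}$ for every $r\in\mathbb{N}^*$. Equivalently, one may observe directly that $Y(f^r)=r f^{r-1}Y(f)=0$.

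There is essentially no analytic obstacle here; the entire argument rests on the fact that the characterizing condition in Proposition \ref{pIII1} is linear in the Hamiltonian through a \emph{fixed} derivation $Y$, so that the subalgebra structure of $I^1_\mathcal{O}(Y)$ does all the work. The only point that deserves care is precisely to note that $Y$ does not vary with $f$ or $g$ but only with the fixed multiplier $\alpha$, which is what allows a single Leibniz identity to handle the product $fg$ and, by induction, every power $f^r$.
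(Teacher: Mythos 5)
Your proposal is correct and follows exactly the route the paper intends: the corollary is stated there as an immediate consequence of Proposition \ref{pIII1}, and your argument simply makes explicit the underlying reason, namely that $Y=\alpha Z_\omega-Z_\alpha$ is a fixed holomorphic vector field (hence a derivation on $\mathcal{O}(M)$) depending only on $\alpha$, so $Y(f)=Y(g)=0$ forces $Y(fg)=0$ by the Leibniz rule, with the power case following by induction or directly from $Y(f^r)=rf^{r-1}Y(f)$. There is nothing to add or correct.
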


Let $(z^1,\ldots,z^n)$ be a local coordinates system on $M$ such that $\omega=dz^1\wedge\ldots\wedge dz^n$ and the holomorphic Poisson bivector of $(M,\{\cdot,\cdot\})$ is
\begin{displaymath}
P=\sum\limits_{i<j}^nP^{ij}\frac{\partial}{\partial z^i}\wedge\frac{\partial}{\partial z^j}.
\end{displaymath}

If we denote $P^i=\sum\limits_{j=1}^n(\partial P^{ij}/\partial z^j)$, then a standard computation similar to the smooth case, see for instance Ch. 2.6 in \cite{D-Z}, yields
\begin{equation}
\label{III1}
Z_\omega=\sum_{i=1}^nP^i\frac{\partial}{\partial z^i}.
\end{equation}
Then Proposition \ref{pIII1} and Corollary \ref{cIII1} have the following local form.
\begin{proposition}
Let $f\in\mathcal{O}(M)$. Then $\alpha\in\mathcal{O}(M)$ is a holomorphic last multiplier for the holomorphic Hamiltonian vector field $Z_f$ of $f$ if and only if
\begin{equation}
\label{III2}
\alpha\sum_{i=1}^n P^i\frac{\partial f}{\partial z^i}=\{\alpha,f\}=\sum_{i<j}^n P^{ij}\frac{\partial \alpha}{\partial z^i}\frac{\partial f}{\partial z^j},
\end{equation}
and $f$ is a holomorphic last multiplier for $Z_f$ if and only if
\begin{equation}
\label{III3}
\sum_{i=1}^n P^i\frac{\partial f}{\partial z^i}=0.
\end{equation}
\end{proposition}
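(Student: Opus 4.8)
The plan is to read off both equivalences directly from the coordinate-free statements already established in Proposition \ref{pIII1} and Corollary \ref{cIII1}, translating each condition into the local frame $(z^1,\ldots,z^n)$ in which $\omega=dz^1\wedge\ldots\wedge dz^n$ and the local formula \eqref{III1} for the modular vector field $Z_\omega$ holds.

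First I would recall from Proposition \ref{pIII1} that $\alpha$ is a holomorphic last multiplier for $Z_f$ exactly when $f\in I^1_\mathcal{O}(\alpha Z_\omega-Z_\alpha)$, i.e. $(\alpha Z_\omega-Z_\alpha)(f)=0$, equivalently $\alpha\,Z_\omega(f)=Z_\alpha(f)$. The left-hand side is handled by \eqref{III1}: substituting $Z_\omega=\sum_{i=1}^n P^i\,\partial/\partial z^i$ gives $\alpha\,Z_\omega(f)=\alpha\sum_{i=1}^n P^i\,\partial f/\partial z^i$, which is the first member of \eqref{III2}. For the right-hand side I would use the very definition of the holomorphic Hamiltonian vector field, $Z_\alpha(f)=\{\alpha,f\}$, together with the relation $P(\partial\alpha,\partial f)=\{\alpha,f\}$ and the coordinate expression $P=\sum_{i<j}P^{ij}\,\partial/\partial z^i\wedge\partial/\partial z^j$; contracting $P$ with the two holomorphic differentials $\partial\alpha$ and $\partial f$ yields the last member of \eqref{III2}. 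Chaining these three expressions gives \eqref{III2}.

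For the second equivalence I would invoke Corollary \ref{cIII1}, according to which $f$ is a holomorphic last multiplier for $Z_f$ precisely when $f\in I^1_\mathcal{O}(Z_\omega)$, i.e. $Z_\omega(f)=0$; by \eqref{III1} this reads literally $\sum_{i=1}^n P^i\,\partial f/\partial z^i=0$, which is \eqref{III3}. Alternatively one may set $\alpha=f$ in \eqref{III2}: the antisymmetry of $P$ forces $\{f,f\}=0$, so \eqref{III2} reduces to $f\sum_{i=1}^n P^i\,\partial f/\partial z^i=0$, and one recovers \eqref{III3} after cancelling the factor $f$.

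Since everything reduces to substituting the already-proven local formula \eqref{III1} and the standard coordinate form of the Poisson bracket, there is no genuine obstacle here. The one step demanding care — and the place where a reader is most likely to stumble — is the bookkeeping with the summation convention: the contraction of $P=\sum_{i<j}P^{ij}\,\partial_i\wedge\partial_j$ against $\partial\alpha$ and $\partial f$ produces $\sum_{i<j}P^{ij}\bigl(\partial_i\alpha\,\partial_j f-\partial_j\alpha\,\partial_i f\bigr)$, and one must keep the antisymmetric extension $P^{ji}=-P^{ij}$ implicit so that this coincides with the right-hand side of \eqref{III2} as written in the stated $i<j$ form.
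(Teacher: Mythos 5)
Your proof is correct and follows exactly the paper's route: the paper states this proposition with no separate argument, presenting it precisely as ``the local form'' of Proposition \ref{pIII1} and Corollary \ref{cIII1} obtained by substituting the coordinate expression \eqref{III1} for $Z_\omega$ and the standard coordinate form of $\{\alpha,f\}=P(\partial\alpha,\partial f)$, just as you do. Your remark on the antisymmetric extension $P^{ji}=-P^{ij}$ in the $i<j$ summation, and the alternative derivation of \eqref{III3} by setting $\alpha=f$ in \eqref{III2}, are both sound additions (the cancellation of $f$ being legitimate since the zero set of a nonzero holomorphic function is nowhere dense).
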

\begin{example}
\label{e3.0}
Let $M$ be a $2$-dimensional complex manifold with local complex coordinates $(z^1,z^2)$ in a local complex chart of $M$ and with a holomorphic volume form $\omega=dz^1\wedge dz^2$. Then, according to \cite{S}, all holomorphic bivector fields $P=f(z^1,z^2)(\partial/\partial z^1)\wedge(\partial/\partial z^2)\in\mathcal{V}^2_\mathcal{O}(M)$ are automatically Poisson tensors. Now, if $\alpha\in\mathcal{O}(M)$ is a holomorphic last multiplier for the holomorphic Hamiltonian vector field $Z_\alpha$ of $\alpha$, then the equation \eqref{III3} reads as
\begin{displaymath}
\frac{\partial f}{\partial z^1}\frac{\partial \alpha}{\partial z^2}-\frac{\partial f}{\partial z^2}\frac{\partial \alpha}{\partial z^1}=0
\end{displaymath}
with the obvious solution $\alpha=\phi(f)$, with $\phi\in C^1(\mathbb{C})$. Thus, $f$ is a holomorphic last multiplier for its holomorphic Hamltonian vector field $Z_f$.

\end{example}
\begin{example}
\label{e3.1}
\textit{Holomorphic linear Poisson structure on $\mathbb{C}^n$}.  Following Example 1.3 in \cite{P} (see also Example 3.6 in \cite{C-F-I-U}) we consider the complex manifold $\mathbb{C}^n$, global complex coordinates $(z^1,\ldots,z^n)$, $\omega=dz^1\wedge\ldots\wedge dz^n$ and the holomorphic bivector field $P$ on $\mathbb{C}^n$ with the local components $P^{jk}$ holomorphic functions defined by $P^{jk}(z)=\sum\limits_{l=1}^nc^{jk}_lz^l$, where $c^{jk}_l$ are complex constants satisfying $\sum\limits_{l=1}^n(c^{ij}_lc^{lk}_h+c^{jk}_lc^{li}_h+c^{ki}_lc^{lj}_h)=0$ and $c^{jk}_l+c^{kj}_l=0$. Then, $P$ defines a holomorphic Poisson structure on $\mathbb{C}^n$ called a linear structure (or holomorphic Lie-Poisson structure). Hence, in this case $P^i=\sum\limits_{j=1}^nc^{ij}_j$, and \eqref{III3} says that $f\in\mathcal{O}(\mathbb{C}^n)$ is a holomorphic last multiplier for the holomorphic Hamiltonian vector field $Z_f$ if and only if
\begin{equation}
\label{III4}
\sum_{i,j=1}^nc^{ij}_j\frac{\partial f}{\partial z^i}=0,
\end{equation}
with the general solution
\begin{displaymath}
f=\phi\left(\sum_{j=1}^n(c^{2j}_jz^1-c^{1j}_jz^2),\sum_{j=1}^n(c^{3j}_jz^1-c^{1j}_jz^3),\ldots,\sum_{j=1}^n(c^{nj}_jz^1-c^{1j}_jz^n)\right),
\end{displaymath}
where $\phi\in C^1(\mathbb{C}^{n-1})$.
\end{example}
\begin{example}
\label{e3.2}
Take $\mathbb{C}^3$, with global complex coordinates $(z^1,z^2,z^3)$ and $\omega=dz^1\wedge dz^2\wedge dz^3$. Then it is easy to see that
\begin{equation}
\label{III6}
\{z^1,z^3\}=z^1z^3-2z^2\,,\,\{z^3,z^2\}=z^3z^2-2z^1\,,\,\{z^2,z^1\}=z^2z^1-2z^3
\end{equation}
defines a Poisson bracket on $\mathbb{C}^3$, and then $P^{ij}=\{z^i,z^j\}$ are the local components of a holomorphic Poisson bivector
$P$ on $\mathbb{C}^3$. By direct computation we obtain $P^i=0$ for every $i=1,2,3$, so that the equation \eqref{III3} is satisfied for every holomorphic function $f$ on $\mathbb{C}^3$. Thus, on the holomorphic Poisson manifold $(\mathbb{C}^3,P)$,  every $f\in\mathcal{O}(\mathbb{C}^3)$ is a holomorphic last multiplier for its holomorphic Hamiltonian vector field $Z_f$.
\end{example}

In what follows in this subsection we are interested how we can relate the holomorphic last multipliers for holomorphic Hamiltonian vector fields on holomorphic Poisson manifolds with some real last multipliers for real Hamiltonian vector fields corresponding to natural real Poisson structures on the underlying real manifold. 

According to \cite{L-S-X} if $(z^1=x^1+ix^{n+1},\ldots,z^{n}=x^n+ix^{2n})$ are local complex coordinates in a local chart on the $n$-dimensional complex manifold $M$ endowed with the holomorphic Poisson structure $P$ which is defined locally by the bracket $\{z^i,z^j\}=P^{ij}$, then the real and imaginary parts of $P=P_\mathbb{R}+iP_\mathbb{I}$ define on the underlying real manifold two real Poisson structures by
\begin{equation}
\label{rp}
\{x^i,x^j\}_\mathbb{R}=\frac{1}{4}{\rm Re}\,\{z^i,z^j\}\,,\,\{x^i,x^{n+j}\}_\mathbb{R}=\frac{1}{4}{\rm Im}\,\{z^i,z^j\}\,,\,\{x^{n+i},x^{n+j}\}_\mathbb{R}=-\frac{1}{4}{\rm Re}\,\{z^i,z^j\}
\end{equation}
and
\begin{equation}
\label{ip}
\{x^i,x^j\}_\mathbb{I}=\frac{1}{4}{\rm Im}\,\{z^i,z^j\}\,,\,\{x^i,x^{n+j}\}=-\frac{1}{4}{\rm Re}\,\{z^i,z^j\}\,,\,\{x^{n+i},x^{n+j}\}=-\frac{1}{4}{\rm Im}\,\{z^i,z^j\}.
\end{equation}
Let us denote by $Z^\mathbb{R}_{u}$ and $Z^\mathbb{I}_{u}$, respectively the real Hamiltonian vector fields of a smooth function $u$ on $M$ with respect to the real Poisson structures $P_\mathbb{R}$ and $P_\mathbb{I}$, respectively. We have
\begin{theorem}
\label{th1}
Let $f\in\mathcal{O}(M)$. Then $\alpha\in\mathcal{O}(M)$ is a holomorphic last multiplier for the Hamiltonian vector field $Z_{\log f}$ if and only if $|\alpha|^2$ is a real last multiplier for both real vector felds $(1/|f|^2)Z^\mathbb{R}_{|f|^2}$ and $(1/|f|^2)Z^\mathbb{I}_{|f|^2}$.
\end{theorem}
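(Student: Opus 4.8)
The plan is to follow exactly the strategy used in Theorem \ref{tg}, reducing everything to Theorem \ref{thlm} applied to the holomorphic Hamiltonian vector field $Z:=Z_{\log f}$. Recall from the discussion preceding Theorem \ref{thlm} that the two real vector fields canonically associated with a holomorphic vector field $Z$ are $Z_\mathbb{R}=Z+\overline{Z}$ and $W_\mathbb{R}=-i(Z-\overline{Z})$, and that $|\alpha|^2$ is a real last multiplier for both of them precisely when $\alpha$ is a holomorphic last multiplier for $Z$. Thus the whole theorem will be proved once we identify the two scaled real Hamiltonian vector fields $(1/|f|^2)Z^\mathbb{R}_{|f|^2}$ and $(1/|f|^2)Z^\mathbb{I}_{|f|^2}$ with $Z_\mathbb{R}$ and $W_\mathbb{R}$ for this choice of $Z$.

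First I would write the local form of $Z_{\log f}$. Since $Z_{\log f}=\imath_{\partial\log f}P$ and $\partial\log f=(1/f)\partial f$, in coordinates one gets $Z_{\log f}=\frac{1}{f}\sum_{i,j}P^{ij}\frac{\partial f}{\partial z^i}\frac{\partial}{\partial z^j}$ (with $P^{ij}$ extended antisymmetrically), which is the exact Poisson analogue of the gradient expression \eqref{r6}, with $g^{ij}$ replaced by $P^{ij}$. Next I would expand the two real Hamiltonian vector fields $Z^\mathbb{R}_{|f|^2}$ and $Z^\mathbb{I}_{|f|^2}$ in the real coordinates $(x^1,\ldots,x^{2n})$, using the components of the real Poisson bivectors read off from \eqref{rp} and \eqref{ip}, in complete analogy with the expansions \eqref{r7}--\eqref{r8}.

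The remaining step is a change of variables: I would substitute $\frac{\partial}{\partial x^k}=\frac{\partial}{\partial z^k}+\frac{\partial}{\partial\overline{z}^k}$ and $\frac{\partial}{\partial x^{n+k}}=i\left(\frac{\partial}{\partial z^k}-\frac{\partial}{\partial\overline{z}^k}\right)$, and use the holomorphy of $f$ to write $\frac{\partial|f|^2}{\partial x^i}=\overline{f}\,\frac{\partial f}{\partial z^i}+f\,\overline{\frac{\partial f}{\partial z^i}}$ and $\frac{\partial|f|^2}{\partial x^{n+i}}=-2\,{\rm Im}\left(\overline{f}\,\frac{\partial f}{\partial z^i}\right)$. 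After collecting the holomorphic and anti-holomorphic parts, the factors of $1/4$ coming from \eqref{rp}--\eqref{ip} cancel against those produced by the coordinate change and by the factor $|f|^2=f\overline{f}$, and one arrives at the two identities
\begin{displaymath}
\frac{1}{|f|^2}Z^\mathbb{R}_{|f|^2}=Z_{\log f}+\overline{Z_{\log f}},\qquad \frac{1}{|f|^2}Z^\mathbb{I}_{|f|^2}=-i\left(Z_{\log f}-\overline{Z_{\log f}}\right),
\end{displaymath}
which are the Poisson counterparts of \eqref{r9}--\eqref{r10}. With these in hand the theorem is immediate from Theorem \ref{thlm}.

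The conceptual content is entirely carried by Theorem \ref{thlm}, so the only genuine work is the bookkeeping in the last step. The hard part will be keeping the index conventions and the various normalization factors consistent (the $1/4$ in the real Poisson brackets, the $\pm i$ in the coordinate change, and the antisymmetric extension of $P^{ij}$), while correctly separating real and imaginary parts so that the cross-terms mixing $\partial_{z}$ and $\partial_{\overline{z}}$ organize into $Z_{\log f}\pm\overline{Z_{\log f}}$. As in Theorem \ref{tg}, this is a straightforward but sign-sensitive computation, so I would carry it out carefully and, as a check, verify the two identities for $P_\mathbb{R}$ and $P_\mathbb{I}$ separately.
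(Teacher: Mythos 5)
Your proposal is correct and follows the paper's proof essentially verbatim: write $Z_{\log f}$ in the form \eqref{2016-1}, expand $Z^\mathbb{R}_{|f|^2}$ and $Z^\mathbb{I}_{|f|^2}$ in real coordinates using \eqref{rp}--\eqref{ip}, convert via $\partial/\partial x^k=\partial/\partial z^k+\partial/\partial\overline{z}^k$ and $\partial/\partial x^{n+k}=i(\partial/\partial z^k-\partial/\partial\overline{z}^k)$, identify the two real Hamiltonian fields (up to scale) with $Z_\mathbb{R}$ and $W_\mathbb{R}$ for $Z=Z_{\log f}$, and invoke Theorem \ref{thlm}. The only discrepancy is a harmless normalization: with the $1/4$ factors of \eqref{rp}--\eqref{ip} the computation actually yields $(2/|f|^2)Z^\mathbb{R}_{|f|^2}=Z_{\log f}+\overline{Z_{\log f}}$ and $(2/|f|^2)Z^\mathbb{I}_{|f|^2}=-i\left(Z_{\log f}-\overline{Z_{\log f}}\right)$ rather than your claimed identities with $1/|f|^2$, but since the real last multiplier condition is invariant under multiplying the vector field by a nonzero constant, this factor of $2$ does not affect the conclusion.
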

\begin{proof}
With respect to the complex coordinates $(z^1,\ldots,z^n)$ on $M$ the local form of the holomorphic Hamiltonian vector field $Z_{\log f}$ (corresponding to holomorphic Poisson structure $P$) is
\begin{equation}
\label{2016-1}
Z_{\log f}=\frac{1}{f}\sum_{i,j=1}^nP^{ij}\frac{\partial f}{\partial z^i}\frac{\partial}{\partial z^j}
\end{equation}
and with respect to real coordinates $(x^1,\ldots,x^{2n})$ the local form of the Hamiltonian vector field $Z^\mathbb{R}_{|f|^2}$ (corresponding to real Poisson structure $P_\mathbb{R}$) is
\begin{equation}
\label{2016-2}
Z^\mathbb{R}_{|f|^2}=\sum_{i,j=1}^nP_\mathbb{R}^{ij}\frac{\partial|f|^2}{\partial x^i}\frac{\partial}{\partial x^j}+\sum_{i,j=1}^nP_\mathbb{R}^{in+j}\frac{\partial|f|^2}{\partial x^i}\frac{\partial}{\partial x^{n+j}}
\end{equation}
\begin{displaymath}
\,\,\,\,\,\,\,\,\,\,\,\,\,\,\,\,\,\,\,\,\,\,\,\,\,\,\,\,\,\,\,\,\,\,\,\,\,\,\,\,\,\,\,\,\,\,\,\,+\sum_{i,j=1}^nP_\mathbb{R}^{n+ij}\frac{\partial|f|^2}{\partial x^{n+i}}\frac{\partial}{\partial x^j}+\sum_{i,j=1}^nP_\mathbb{R}^{n+in+j}\frac{\partial|f|^2}{\partial x^{n+i}}\frac{\partial}{\partial x^{n+j}},
\end{displaymath}
where $P_\mathbb{R}^{ij}=\{x^i,x^j\}_\mathbb{R}$, $P_\mathbb{R}^{in+j}=\{x^i,x^{n+j}\}_\mathbb{R}$, $P_\mathbb{R}^{n+ij}=-P_\mathbb{R}^{jn+i}$ and $P_\mathbb{R}^{n+in+j}=-P_\mathbb{R}^{ij}$, respectively.

Taking into account that
\begin{displaymath}
P_\mathbb{R}^{ij}=\frac{1}{4}{\rm Re}\,P^{ij}=\frac{P^{ij}+\overline{P^{ij}}}{8}\,,\,P_\mathbb{R}^{in+j}=\frac{1}{4}{\rm Im}\,P^{ij}=\frac{P^{ij}-\overline{P^{ij}}}{8i},
\end{displaymath}
\begin{displaymath}P_\mathbb{R}^{n+ij}=-P_\mathbb{R}^{jn+i}=-\frac{P^{ji}-\overline{P^{ji}}}{8i}\,,\,P_\mathbb{R}^{n+in+j}=-P_\mathbb{R}^{ij}=-\frac{P^{ij}+\overline{P^{ij}}}{8}
\end{displaymath}
and
\begin{displaymath}
\frac{\partial}{\partial x^k}=\frac{\partial}{\partial z^k}+\frac{\partial}{\partial \overline{z}^k}\,,\,\frac{\partial}{\partial x^{n+k}}=i\left(\frac{\partial}{\partial z^k}-\frac{\partial}{\partial \overline{z}^k}\right)
\end{displaymath}
a straightforward computation in \eqref{2016-2} yields 
\begin{displaymath}
Z^\mathbb{R}_{|f|^2}=\frac{1}{2}\left(\overline{f}\sum_{i,j=1}^nP^{ij}\frac{\partial f}{\partial z^i}\frac{\partial}{\partial z^j}+f\sum_{i,j=1}^n\overline{P^{ij}}\frac{\partial\overline{f}}{\partial\overline{z}^i}\frac{\partial}{\partial\overline{z}^j}\right),
\end{displaymath}
or equivalently $(2/|f|^2)Z^\mathbb{R}_{|f|^2}=Z_{\log f}+\overline{Z_{\log f}}=2{\rm Re}\,Z_{\log f}$.

Similarly, the local form of the Hamiltonian vector field $Z^\mathbb{I}_{|f|^2}$ (corresponding to real Poisson structure $P_\mathbb{I}$) is
\begin{equation}
\label{2016-3}
Z^\mathbb{I}_{|f|^2}=\sum_{i,j=1}^nP_\mathbb{I}^{ij}\frac{\partial|f|^2}{\partial x^i}\frac{\partial}{\partial x^j}+\sum_{i,j=1}^nP_\mathbb{I}^{in+j}\frac{\partial|f|^2}{\partial x^i}\frac{\partial}{\partial x^{n+j}}
\end{equation}
\begin{displaymath}
\,\,\,\,\,\,\,\,\,\,\,\,\,\,\,\,\,\,\,\,\,\,\,\,\,\,\,\,\,\,\,\,\,\,\,\,\,\,\,\,\,\,\,\,\,\,\,\,+\sum_{i,j=1}^nP_\mathbb{I}^{n+ij}\frac{\partial|f|^2}{\partial x^{n+i}}\frac{\partial}{\partial x^j}+\sum_{i,j=1}^nP_\mathbb{I}^{n+in+j}\frac{\partial|f|^2}{\partial x^{n+i}}\frac{\partial}{\partial x^{n+j}},
\end{displaymath}
where $P_\mathbb{I}^{ij}=\{x^i,x^j\}_\mathbb{I}$, $P_\mathbb{I}^{in+j}=\{x^i,x^{n+j}\}_\mathbb{I}$, $P_\mathbb{I}^{n+ij}=-P_\mathbb{I}^{jn+i}$ and $P_\mathbb{I}^{n+in+j}=-P_\mathbb{I}^{ij}$, respectively. Then a similar computation as above in \eqref{2016-3} yields
\begin{displaymath}
Z^\mathbb{I}_{|f|^2}=-\frac{i}{2}\left(\overline{f}\sum_{i,j=1}^nP^{ij}\frac{\partial f}{\partial z^i}\frac{\partial}{\partial z^j}-f\sum_{i,j=1}^n\overline{P^{ij}}\frac{\partial\overline{f}}{\partial\overline{z}^i}\frac{\partial}{\partial\overline{z}^j}\right),
\end{displaymath}
or equivalently $(2/|f|^2)Z^\mathbb{I}_{|f|^2}=-i\left(Z_{\log f}-\overline{Z_{\log f}}\right)=2{\rm Im}\,Z_{\log f}$.

Finally, the proof follows by Theorem \ref{thlm}.
\end{proof}

As well as we seen the relation \eqref{III3} says when a holomorphic function is a last multiplier for its holomorphic Hamiltonian vector field. This condition can be related in terms o real last multipliers for real Hamiltonian vector fields as follows.
\begin{theorem}
\label{th2}
A holomorphic function $f\in\mathcal{O}(M)$ is a last multiplier for its holomorphic Hamiltonian vector field $Z_f$ if and only if $|f|^2$ is a real last multiplier for both Hamiltonian vector fields $Z^\mathbb{R}_{|f|^2}$ and $Z^\mathbb{I}_{|f|^2}$.
\end{theorem}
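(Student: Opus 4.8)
The plan is to reduce Theorem \ref{th2} to an already-proven special case rather than redoing the full coordinate computation from scratch. The statement concerns when $f$ is a holomorphic last multiplier for its \emph{own} Hamiltonian vector field $Z_f$, which by Corollary \ref{cIII1} and the local equation \eqref{III3} is governed by the condition $\sum_{i=1}^n P^i (\partial f/\partial z^i)=0$. On the real side, the claim is that $|f|^2$ is simultaneously a real last multiplier for $Z^\mathbb{R}_{|f|^2}$ and $Z^\mathbb{I}_{|f|^2}$. Since Theorem \ref{th1} already establishes the analogous equivalence for $\mathrm{grad}_g(\log f)$-type data, the natural route is to recognize that Theorem \ref{th2} is the degenerate instance of Theorem \ref{th1} in which the holomorphic multiplier $\alpha$ equals $f$ itself, combined with the intrinsic identity $Z_{\log f}=(1/f)Z_f$.

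First I would record the key observation that the holomorphic Hamiltonian vector field satisfies $Z_{\log f}=\imath_{\partial(\log f)}P=(1/f)\,\imath_{\partial f}P=(1/f)Z_f$, so $Z_f$ and $Z_{\log f}$ differ only by the nonvanishing holomorphic factor $f$. By Remark (ii) following \eqref{II2}, multiplying a vector field by a holomorphic first integral, and more generally the module structure of last multipliers, lets one transfer multiplier statements between $Z_f$ and $Z_{\log f}$. Concretely, I would verify that $f$ is a holomorphic last multiplier for $Z_f$ precisely when the constant function $1$ is a holomorphic last multiplier for $Z_{\log f}$; indeed the defining relation \eqref{II2} for $Z_f$ with multiplier $f$ reads $Z_f(f)+f\,\mathrm{div}_\omega(Z_f)=0$, and since $Z_f(f)=\{f,f\}=0$ this collapses to $f\,Z_\omega(f)=0$, i.e. $Z_\omega(f)=0$, which is exactly \eqref{III3}.

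Next I would invoke Theorem \ref{th1} with the choice $\alpha=f$. That theorem already proved the two scaling identities $(2/|f|^2)Z^\mathbb{R}_{|f|^2}=2\,\mathrm{Re}\,Z_{\log f}=Z_{\log f}+\overline{Z_{\log f}}$ and $(2/|f|^2)Z^\mathbb{I}_{|f|^2}=2\,\mathrm{Im}\,Z_{\log f}=-i(Z_{\log f}-\overline{Z_{\log f}})$. Comparing with the notation of Theorem \ref{thlm}, these say precisely that $(1/|f|^2)Z^\mathbb{R}_{|f|^2}$ and $(1/|f|^2)Z^\mathbb{I}_{|f|^2}$ are the real and imaginary associated vector fields $(Z_{\log f})_\mathbb{R}$ and $(Z_{\log f})_\mathbb{W}$ of the single holomorphic field $Z_{\log f}$. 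Then Theorem \ref{thlm}, applied to $Z:=Z_{\log f}$ with holomorphic multiplier $\alpha:=f$, gives the equivalence: $f$ is a holomorphic last multiplier for $Z_{\log f}$ if and only if $|f|^2$ is a real last multiplier for both $(1/|f|^2)Z^\mathbb{R}_{|f|^2}$ and $(1/|f|^2)Z^\mathbb{I}_{|f|^2}$.

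Finally I would close the loop between the two holomorphic conditions. The remaining point is that ``$f$ is a holomorphic last multiplier for $Z_f$'' and ``$f$ is a holomorphic last multiplier for $Z_{\log f}$'' are the same condition: both reduce, via \eqref{II2} and $Z_f(f)=\{f,f\}=0$ together with $Z_{\log f}=(1/f)Z_f$ and $\mathrm{div}_\omega(Z_{\log f})=\mathrm{div}_\omega(Z_f)/f+Z_{\log f}(\log f)$ from \eqref{I4}, to the single scalar equation $Z_\omega(f)=0$. The main obstacle I anticipate is bookkeeping: carefully matching the real vector fields $Z^\mathbb{R}_{|f|^2}$, $Z^\mathbb{I}_{|f|^2}$ here with the fields $Z_\mathbb{R}$, $W_\mathbb{R}$ of Theorem \ref{thlm} (the factor $|f|^2$ must be absorbed correctly, and one must confirm that rescaling a real vector field by the positive function $1/|f|^2$ does not alter whether $|f|^2$ is a real last multiplier, which follows from the real analogue of \eqref{II2}). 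Once these identifications are in place, the result is immediate from Theorems \ref{thlm} and \ref{th1}, so no further computation is needed.
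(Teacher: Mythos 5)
Your proposal is correct, and it takes a genuinely different route from the paper. The paper proves Theorem \ref{th2} by a direct coordinate computation: it evaluates the divergence-type components $P^i_\mathbb{R}$, $P^{n+i}_\mathbb{R}$, $P^i_\mathbb{I}$, $P^{n+i}_\mathbb{I}$ of the two real Poisson structures in terms of $\sum_k\partial P^{ik}/\partial z^k$ (formulas \eqref{2016-4}--\eqref{2016-8}) and shows that the two real last-multiplier conditions are exactly the real and imaginary parts of $\overline{f}\,Z_\omega(f)=0$, whence \eqref{III3}. You instead deduce the theorem from Theorem \ref{th1} specialized at $\alpha=f$ (together with Theorem \ref{thlm}), bridging the two discrepancies between that specialization and the statement of Theorem \ref{th2}: on the holomorphic side, ``$f$ is a last multiplier for $Z_f$'' and ``$f$ is a last multiplier for $Z_{\log f}$'' both collapse to $Z_\omega(f)=0$, because $Z_f(f)=\{f,f\}=0$ and $Z_{\log f}=(1/f)Z_f$; on the real side, the factor $1/|f|^2$ appearing in Theorem \ref{th1} can be dropped. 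Your route is shorter, avoids recomputing the real modular data, and exposes the structural reason why Theorem \ref{th2} is the $\alpha=f$ instance of Theorem \ref{th1}; the paper's computation, in exchange, yields the explicit identities \eqref{2016-4}--\eqref{2016-9}, which identify the local components of the real modular vector fields and are of independent use.

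One step deserves to be stated more carefully, since your phrasing of it is misleading as a general principle: rescaling a vector field by a positive function does \emph{not} in general preserve last multipliers. If ${\rm div}(mX)=0$, then ${\rm div}(m\,gX)=m\,X(g)$, so $m$ remains a last multiplier for $gX$ only when $g$ is a first integral of $X$. What saves you here is precisely that $g=1/|f|^2$ is a function of the Hamiltonian $|f|^2$, and a Hamiltonian vector field annihilates its own Hamiltonian: $Z^\mathbb{R}_{|f|^2}(|f|^2)=\{|f|^2,|f|^2\}_\mathbb{R}=0$ and likewise for $Z^\mathbb{I}_{|f|^2}$. Consequently both ``$|f|^2$ is a real last multiplier for $Z^\mathbb{R}_{|f|^2}$'' and ``$|f|^2$ is a real last multiplier for $(1/|f|^2)Z^\mathbb{R}_{|f|^2}$'' reduce to the single condition ${\rm div}(Z^\mathbb{R}_{|f|^2})=0$, and your chain of equivalences closes. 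Two minor remarks: your formula for ${\rm div}_\omega(Z_{\log f})$ should read $Z_f(1/f)+(1/f){\rm div}_\omega(Z_f)$ by \eqref{I4} (the extra term vanishes anyway since $Z_f(f)=0$, so the conclusion ${\rm div}_\omega(Z_{\log f})=(1/f)Z_\omega(f)$ stands); and the whole argument, like Theorems \ref{th1} and \ref{thlm} and indeed the paper's own proof, tacitly assumes $f$ non-vanishing, so that $\log f$ and the division by $\overline{f}$ are legitimate.
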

\begin{proof}
If we take the real Poisson structure $P_\mathbb{R}$ we have
\begin{equation}
\label{2016-4}
P^i_{\mathbb{R}}=\sum_{k=1}^n\left(\frac{\partial P_\mathbb{R}^{ik}}{\partial x^k}+\frac{\partial P_\mathbb{R}^{in+k}}{\partial x^{n+k}}\right)=\frac{1}{4}\sum_{k=1}^n\left(\frac{\partial P^{ik}}{\partial z^k}+\frac{\partial \overline{P^{ik}}}{\partial \overline{z}^k}\right)
\end{equation}
and
\begin{equation}
\label{2016-5}
P^{n+i}_{\mathbb{R}}=\sum_{k=1}^n\left(\frac{\partial P_\mathbb{R}^{n+ik}}{\partial x^k}+\frac{\partial P_\mathbb{R}^{n+in+k}}{\partial x^{n+k}}\right)=\frac{i}{4}\sum_{k=1}^n\left(\frac{\partial P^{ki}}{\partial z^k}-\frac{\partial \overline{P^{ki}}}{\partial \overline{z}^k}\right).
\end{equation}
Then the relation \eqref{III3} (for the real case \cite{C2}) says that $|f|^2$ is real last multiplier for the Hamiltonian vector field $Z^\mathbb{R}_{|f|^2}$ if and only if
\begin{displaymath}
\sum_{i=1}^n\left(P_\mathbb{R}^i\frac{\partial|f|^2}{\partial x^i}+P_\mathbb{R}^{n+i}\frac{\partial|f|^2}{\partial x^{n+i}}\right)=0
\end{displaymath}
and, using \eqref{2016-4} and \eqref{2016-5}, this condition is equivalent with
\begin{equation}
\label{2016-6}
{\rm Re}\,\left[\overline{f}\sum_{i=1}^n\left(\sum_{k=1}^n\frac{\partial P^{ik}}{\partial z^k}\right)\frac{\partial f}{\partial z^i}\right]=0.
\end{equation}
Similarly, if we take the real Poisson structure $P_\mathbb{I}$ we have
\begin{equation}
\label{2016-7}
P^i_{\mathbb{I}}=\sum_{k=1}^n\left(\frac{\partial P_\mathbb{I}^{ik}}{\partial x^k}+\frac{\partial P_\mathbb{I}^{in+k}}{\partial x^{n+k}}\right)=-\frac{i}{4}\sum_{k=1}^n\left(\frac{\partial P^{ik}}{\partial z^k}-\frac{\partial \overline{P^{ik}}}{\partial \overline{z}^k}\right)
\end{equation}
and
\begin{equation}
\label{2016-8}
P^{n+i}_{\mathbb{I}}=\sum_{k=1}^n\left(\frac{\partial P_\mathbb{I}^{n+ik}}{\partial x^k}+\frac{\partial P_\mathbb{I}^{n+in+k}}{\partial x^{n+k}}\right)=-\frac{1}{4}\sum_{k=1}^n\left(\frac{\partial P^{ik}}{\partial z^k}+\frac{\partial \overline{P^{ik}}}{\partial \overline{z}^k}\right).
\end{equation}
Then $|f|^2$ is real last multiplier for the Hamiltonian vector field $Z^\mathbb{I}_{|f|^2}$ if and only if
\begin{displaymath}
\sum_{i=1}^n\left(P_\mathbb{I}^i\frac{\partial|f|^2}{\partial x^i}+P_\mathbb{I}^{n+i}\frac{\partial|f|^2}{\partial x^{n+i}}\right)=0
\end{displaymath}
and, using \eqref{2016-7} and \eqref{2016-8}, this condition is equivalent with
\begin{equation}
\label{2016-9}
{\rm Im}\,\left[\overline{f}\sum_{i=1}^n\left(\sum_{k=1}^n\frac{\partial P^{ik}}{\partial z^k}\right)\frac{\partial f}{\partial z^i}\right]=0.
\end{equation}
Now, the proof follows using \eqref{III3}, \eqref{2016-6} and \eqref{2016-9}.
\end{proof}

\subsection{Holomorphic last multipliers for holomorphic Poisson bivectors}

Let us consider as in the previous subsection $\mathcal{V}^p_\mathcal{O}(M)$ the $\mathcal{O}(M)$-module of holomorphic $p$-vector fields on $M$, $1\leq p\leq n$. A holomorphic $p$-vector field $A$ defines the map $\imath_A:\Omega^k_\mathcal{O}(M)\rightarrow\Omega^{k-p}_\mathcal{O}(M)$ given by
\begin{displaymath}
\langle\imath_A\varphi,B\rangle=\langle\varphi,A\wedge B\rangle
\end{displaymath}
for every $\varphi\in\Omega^p_\mathcal{O}(M)$ and $B\in\mathcal{V}^{k-p}_\mathcal{O}(M)$, $k\geq p$ where $\langle,\rangle$ is the natural duality between holomorphic forms and holomorphic multivectors. We note that $\imath_A\varphi=0$ for $k<p$.

Consider that $M$ is endowed with a holomorphic volume form $\omega\in\Omega^n_\mathcal{O}(M)$. Then $\omega$ defines the map
\begin{equation}
\label{IV1}
\omega^\flat:\mathcal{V}^p_\mathcal{O}(M)\rightarrow\Omega^{n-p}_\mathcal{O}(M)\,,\,\omega^\flat(A)=\imath_A\omega,
\end{equation}
which is an $\mathcal{O}(M)$-isomorphism between $\mathcal{V}^p_\mathcal{O}(M)$ and $\Omega^{n-p}_\mathcal{O}(M)$, for every $0\leq p\leq n$. For instance, if $(z^1,\ldots,z^n)$ are local complex coordinates on $M$, $A=A^{i_1\ldots i_p}(z)(\partial/\partial z^{i_1})\wedge\ldots\wedge(\partial/\partial z^{i_p})\in\mathcal{V}^p_\mathcal{O}(M)$, and $\omega=dz^1\wedge\ldots\wedge dz^n$, then
\begin{displaymath}
\omega^\flat(A)=\imath_A\omega=(-1)^{i_1-1}\ldots(-1)^{i_p-1}A^{i_1\ldots i_p}(z)dz^1\wedge\widehat{dz^{i_1}}\wedge\ldots\wedge\widehat{dz^{i_p}}\wedge\ldots\wedge dz^n.
\end{displaymath}
Also, we denote by $\omega^\sharp:\Omega^{n-p}_\mathcal{O}(M)\rightarrow\mathcal{V}^p_\mathcal{O}(M)$ the inverse map of $\omega^\flat$.
\begin{definition}
The map $D:\mathcal{V}^p_\mathcal{O}(M)\rightarrow\mathcal{V}^{p-1}_\mathcal{O}(M)$ given by
\begin{equation}
\label{IV2}
D_\omega=\omega^\sharp\circ \partial\circ\omega^\flat,
\end{equation}
is called the \textit{holomorphic curl operator} with respect to the holomorphic volume form $\omega$. If $A\in\mathcal{V}^p_\mathcal{O}(M)$ then $D_\omega A$ is called the \textit{holomorphic curl} of $A$.
\end{definition}
We notice that the above definition is considered in \cite{L-P} in the case $M=\mathbb{C}^n$.
\begin{definition}
A holomorphic $p$-multivector $A$ on $M$ is called \textit{exact} if $D_\omega A=0$. Specifically, a holomorphic Poisson bivector $P\in\mathcal{V}^2_\mathcal{O}(M)$ satisfying $D_\omega P=0$ is called an \textit{exact holomorphic Poisson structure}.
\end{definition}
Using a similar computation as in the smooth case (see Theorem 2.1 and Proposition 2.3 from \cite{Y}), we obtain the following characterization of the exactness of holomorphic Hamiltonian vector fields and of holomorphic Poisson bivector fields on $\mathbb{C}^n$.
\begin{proposition}
Let $P$ be a holomorphic Poisson structure on the holomorphic Riemannian manifold $(\mathbb{C}^n,g)$, where $g=\sum\limits_{j=1}^n(dz^j)^2$ is the holomorphic euclidian metric. Then a holomorphic Hamiltonian vector field $Z_h$ of $h\in\mathcal{O}(\mathbb{C}^n)$ is exact if and only if $g(D_\omega P,{\rm grad}h)=0$, or equivalently $h\in I^1_\mathcal{O}(D_\omega P)$.
\end{proposition}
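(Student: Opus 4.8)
The plan is to reduce the statement to the single identity that the holomorphic modular vector field $Z_\omega$ coincides, up to sign, with the holomorphic curl $D_\omega P$ of the Poisson bivector, and then to read off all three conditions from this identity together with the definitions of divergence and gradient.

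First I would evaluate the curl operator on a $1$-vector. For $X\in\mathcal{X}_\mathcal{O}(M)=\mathcal{V}^1_\mathcal{O}(M)$, the definition \eqref{IV2} gives $D_\omega X=\omega^\sharp\big(\partial(\imath_X\omega)\big)$; by \eqref{I3} we have $\partial(\imath_X\omega)={\rm div}_\omega(X)\,\omega$, and since $\omega^\flat$ sends a function $f$ to the $n$-form $f\omega$, its inverse $\omega^\sharp$ sends $f\omega$ back to $f$. Hence $D_\omega X={\rm div}_\omega(X)$, i.e. on vector fields the curl is just the divergence. In particular $D_\omega Z_h={\rm div}_\omega(Z_h)$, which by the very definition of the modular vector field equals $Z_\omega(h)$. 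Thus the exactness of $Z_h$ is equivalent to $Z_\omega(h)=0$.

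The key step, and the main obstacle, is to prove that $Z_\omega=-D_\omega P$ (the sign being irrelevant for what follows). This is the holomorphic transcription of the smooth computation in \cite{Y}: working in coordinates with $\omega=dz^1\wedge\ldots\wedge dz^n$ and $P=\sum_{i<j}P^{ij}\,(\partial/\partial z^i)\wedge(\partial/\partial z^j)$, one computes $\omega^\flat(P)=\imath_P\omega$, applies $\partial$, and then $\omega^\sharp$, checking that the resulting vector field is $-\sum_i P^i\,(\partial/\partial z^i)$ with $P^i=\sum_j\partial P^{ij}/\partial z^j$, which is precisely $-Z_\omega$ by \eqref{III1}. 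Granting this, $D_\omega Z_h=Z_\omega(h)=-(D_\omega P)(h)$.

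Finally I would translate $(D_\omega P)(h)$ into the metric formulation. Since $g=\sum_{j}(dz^j)^2$ has $\det(g_{jk})=1$, its metric volume form is the standard $\omega$, so the curl operator and the gradient refer to the same volume data; moreover the defining property \eqref{II5} of the gradient gives $g(D_\omega P,{\rm grad}\,h)=(D_\omega P)(h)$ applied to the vector field $D_\omega P$. Combining the above, $D_\omega Z_h=-g(D_\omega P,{\rm grad}\,h)$, so $Z_h$ is exact if and only if $g(D_\omega P,{\rm grad}\,h)=0$; and since this quantity is exactly $(D_\omega P)(h)$, it vanishes precisely when $h\in I^1_\mathcal{O}(D_\omega P)$. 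The only genuine computation is the coordinate verification of $Z_\omega=-D_\omega P$; the remaining equivalences are immediate consequences of the definitions.
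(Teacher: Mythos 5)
Your proof is correct and is essentially the argument the paper intends: the paper itself gives no explicit proof (it only points to the smooth-case computation in \cite{Y}), and your pivotal identity relating the modular vector field to the holomorphic curl of $P$ is exactly the paper's own equation \eqref{IV7}, $Z_\omega(f)=(D_\omega P)(f)$, from which the three equivalences follow via $D_\omega Z_h={\rm div}_\omega(Z_h)=Z_\omega(h)$ and the defining property \eqref{II5} of the gradient, just as you describe. One minor point: with the paper's contraction convention $\langle\imath_A\varphi,B\rangle=\langle\varphi,A\wedge B\rangle$ the coordinate computation yields $Z_\omega=+D_\omega P$ rather than your $Z_\omega=-D_\omega P$, but as you correctly note the sign is immaterial for the vanishing statements.
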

\begin{proposition}
If $P\in\mathcal{V}^2_\mathcal{O}(\mathbb{C}^n)$ is  skew symmetric with the structure matrix $(P^{ij}(z))_{n\times n}$, then it is an exact holomorphic Poisson structure on $\mathbb{C}^n$ if and only if
\begin{equation}
\label{x1}
\sum_{j=1}^n\frac{\partial P^{ij}}{\partial z^j}=0,\,i=1,\ldots,n,
\end{equation}
and
\begin{equation}
\label{x2}
\sum_{l=1,s\neq i,j,k}^n\frac{\partial \left(A_l^{ijk}\cdot B^{ijk}\right)}{\partial z^l}=0,\,1\leq i<j<k\leq n,
\end{equation}
where $A_l^{ijk}=(P^{li},P^{lj},P^{lk})$ and $B^{ijk}=(P^{jk},P^{ki},P^{ij})$.
\end{proposition}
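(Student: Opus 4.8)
The plan is to recognise that \eqref{x1} encodes the vanishing of the holomorphic curl $D_\omega P$, while \eqref{x2} encodes the Jacobi identity $[P,P]=0$ modulo \eqref{x1}; together these say precisely that $P$ is a holomorphic Poisson bivector with $D_\omega P=0$, that is, an exact holomorphic Poisson structure. Throughout I abbreviate $\partial_l=\partial/\partial z^l$ and $P^i=\sum_j\partial P^{ij}/\partial z^j$, as in \eqref{III1}.

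First I would dispose of \eqref{x1}. Since $\omega^\sharp$ is an $\mathcal{O}(M)$-isomorphism, the definition \eqref{IV2} gives $D_\omega P=0$ if and only if $\partial(\omega^\flat P)=\partial(\imath_P\omega)=0$; equivalently, applying $\omega^\flat$ to \eqref{IV2} yields $\imath_{D_\omega P}\omega=\partial(\imath_P\omega)$. Computing $\imath_P\omega$ for $P=\sum_{i<j}P^{ij}\,(\partial/\partial z^i)\wedge(\partial/\partial z^j)$ from the coordinate formula for $\omega^\flat$ recorded before \eqref{IV2}, and then applying $\partial$, yields $D_\omega P=\sum_i P^i\,(\partial/\partial z^i)$, in agreement (up to sign) with the modular vector field \eqref{III1} and with the smooth computation of \cite{Y, D-Z}. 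Hence $D_\omega P=0$ if and only if every $P^i$ vanishes, which is exactly \eqref{x1}. The only care needed here is the orientation bookkeeping in contracting $(\partial/\partial z^i)\wedge(\partial/\partial z^j)$ into $\omega$ and differentiating.

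Next I would expand the left-hand side of \eqref{x2}. Writing $A_l^{ijk}\cdot B^{ijk}=P^{li}P^{jk}+P^{lj}P^{ki}+P^{lk}P^{ij}$ and applying the Leibniz rule to $\sum_l\partial_l(A_l^{ijk}\cdot B^{ijk})$, I would collect the result into two groups. The terms where $\partial_l$ hits the first factor give, after using the skew-symmetry $P^{li}=-P^{il}$, the expression $-(P^iP^{jk}+P^jP^{ki}+P^kP^{ij})$; the terms where $\partial_l$ hits the second factor give $-\sum_l(P^{il}\partial_lP^{jk}+P^{jl}\partial_lP^{ki}+P^{kl}\partial_lP^{ij})$, which is (up to an overall nonzero constant) the $(ijk)$-component of the Schouten bracket $[P,P]$. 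Thus \eqref{x2} is equivalent to $P^iP^{jk}+P^jP^{ki}+P^kP^{ij}+[P,P]^{ijk}=0$ for all $i<j<k$. I would also remark that the terms with $l\in\{i,j,k\}$ contribute nothing, since $A_i^{ijk}\cdot B^{ijk}=0$ by skew-symmetry (and likewise for $l=j,k$), so the index restriction appearing in \eqref{x2} is immaterial.

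Finally I would combine the two analyses to obtain the equivalence. If $P$ is an exact holomorphic Poisson structure, then $D_\omega P=0$ gives \eqref{x1} and hence $P^i=0$ for all $i$, while $[P,P]=0$ together with the identity of the previous paragraph gives \eqref{x2}. Conversely, \eqref{x1} yields $D_\omega P=0$ and $P^i=0$, after which the same identity reduces \eqref{x2} to $[P,P]^{ijk}=0$ for all $i<j<k$, i.e. $[P,P]=0$; so $P$ is a holomorphic Poisson bivector with vanishing curl. The conceptual heart of the argument is the observation that \eqref{x2} is the Jacobi expression perturbed exactly by the curl terms $P^iP^{jk}+P^jP^{ki}+P^kP^{ij}$, which cancel precisely under \eqref{x1}; the remaining difficulty is purely the sign-accounting in the curl and Leibniz computations, which is routine.
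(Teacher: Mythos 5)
Your proposal is correct and follows essentially the same route as the paper, which does not spell the argument out but refers to the smooth-case computation of Theorem 2.1 in \cite{Y}: identify \eqref{x1} with $D_\omega P=\sum_i P^i\,\partial/\partial z^i=0$ (consistent with \eqref{III1} and \eqref{IV7}) and expand \eqref{x2} by Leibniz into the curl terms $-(P^iP^{jk}+P^jP^{ki}+P^kP^{ij})$ plus the Jacobi expression, so that under \eqref{x1} the condition \eqref{x2} reduces to $[P,P]=0$. Your additional observation that the terms with $l\in\{i,j,k\}$ vanish identically by skew-symmetry, making the index restriction in \eqref{x2} immaterial, is accurate and a useful detail the paper leaves implicit.
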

Another characterization of exactness of holomorphic bivector fields on $4$-dimensional complex manifolds can be given as in the smooth case, see \cite{C-M2}.
\begin{theorem}
Let $M$ be a $4$-dimensional complex manifold, $P\in\mathcal{V}^2_\mathcal{O}(M)$ which is skew-symmetric satisfying $P(z_0)=0$, $z_0\in M$, and $\omega$ a holomorphic volume form on $M$. Then the following assertions are equivalent:
\begin{enumerate}
\item[(i)] $P$ is an exact holomorphic Poisson bivector.
\item[(ii)] $\imath_P\omega\wedge\imath_P\omega=0$ and $\partial(\imath_P\omega)=0$.
\end{enumerate}
\end{theorem}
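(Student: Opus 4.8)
The plan is to exploit that assertions (i) and (ii) share the same ``closedness'' clause, so that everything collapses to a single scalar comparison. Write $\theta:=\imath_P\omega\in\Omega^{2}_{\mathcal O}(M)$ (here $n-p=4-2=2$). Since $D_\omega=\omega^\sharp\circ\partial\circ\omega^\flat$ and $\omega^\sharp$ is an $\mathcal O(M)$-isomorphism, the exactness condition $D_\omega P=0$ is literally equivalent to $\partial\theta=0$, which is the second half of (ii). As $P$ holomorphic already gives $\overline\partial P=0$, the only remaining content of ``$P$ is an exact holomorphic Poisson bivector'' is the Jacobi identity $[P,P]=0$. Thus the theorem reduces to proving: \emph{under the standing hypothesis $\partial\theta=0$, one has $[P,P]=0$ if and only if $\theta\wedge\theta=0$.}

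First I would record the Koszul/Batalin--Vilkovisky generating identity for the curl, namely that $D_\omega$ fails to be a derivation of $\wedge$ precisely by the Schouten bracket; applied to two copies of $P$ and using the standard sign convention this reads
\begin{equation*}
[P,P]=D_\omega(P\wedge P)-2\,(D_\omega P)\wedge P.
\end{equation*}
This is a formal consequence of $\partial^2=0$ and the Leibniz rule for $\partial$, so it transfers verbatim from the smooth case (as in \cite{C-M2,Y}) to the holomorphic $\partial$-complex. Invoking $D_\omega P=0$ annihilates the last term and leaves $[P,P]=D_\omega(P\wedge P)$, i.e. the Jacobiator is now governed by a single $4$-vector.

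Next I would compute the two quartic quantities in dimension four. In local coordinates with $\omega=dz^1\wedge\cdots\wedge dz^4$ a direct expansion gives $P\wedge P=2\,{\rm Pf}(P)\,(\partial/\partial z^1)\wedge\cdots\wedge(\partial/\partial z^4)$ and $\theta\wedge\theta=2\,{\rm Pf}(P)\,\omega$, where ${\rm Pf}(P)=P^{12}P^{34}-P^{13}P^{24}+P^{14}P^{23}$ is the Pfaffian of the structure matrix. The first identity yields $\omega^\flat(P\wedge P)=2\,{\rm Pf}(P)$, a holomorphic function, whence $[P,P]=D_\omega(P\wedge P)=2\,\omega^\sharp\bigl(\partial\,{\rm Pf}(P)\bigr)$; since $\omega^\sharp$ is an isomorphism this gives $[P,P]=0\iff\partial\,{\rm Pf}(P)=0$. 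The second identity gives $\theta\wedge\theta=0\iff {\rm Pf}(P)=0$.

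Finally I would close the equivalence. The implication (ii)$\Rightarrow$(i) is immediate: ${\rm Pf}(P)=0$ trivially forces $\partial\,{\rm Pf}(P)=0$, hence $[P,P]=0$, and $\partial\theta=0$ is assumed. For (i)$\Rightarrow$(ii) the hypothesis $P(z_0)=0$ becomes essential: from $[P,P]=0$ one obtains only $\partial\,{\rm Pf}(P)=0$, i.e. (since ${\rm Pf}(P)$ is holomorphic) that ${\rm Pf}(P)$ is locally constant, hence constant on the connected component of $z_0$; evaluating at $z_0$, where $P$ and therefore ${\rm Pf}(P)$ vanish, forces ${\rm Pf}(P)\equiv0$, i.e. $\theta\wedge\theta=0$. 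I expect the main obstacle to be pinning down the generating identity for $D_\omega$ in the holomorphic setting with the correct constants (equivalently, performing the componentwise check that, once $D_\omega P=0$, the Jacobiator equals $\omega^\sharp$ applied to the $\partial$-gradient of the Pfaffian); the delicate conceptual point, rather than a computational one, is the passage from ``constant'' to ``identically zero,'' which is exactly what the normalization $P(z_0)=0$ supplies.
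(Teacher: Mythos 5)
Your proposal is correct, and it is essentially the paper's own (omitted) argument: the paper states this theorem without proof, remarking only that the characterization ``can be given as in the smooth case, see \cite{C-M2}'', and your route --- reducing everything to one scalar condition via $D_\omega P=0\iff\partial(\imath_P\omega)=0$, invoking the Koszul-type generating identity $[P,P]=D_\omega(P\wedge P)-2(D_\omega P)\wedge P$ (which indeed transfers verbatim to the holomorphic $\partial$-complex), computing $P\wedge P=2\,{\rm Pf}(P)\,(\partial/\partial z^1)\wedge\cdots\wedge(\partial/\partial z^4)$ and $\imath_P\omega\wedge\imath_P\omega=2\,{\rm Pf}(P)\,\omega$, and using $P(z_0)=0$ to rule out a nonzero constant Pfaffian --- is precisely the holomorphic transcription of the smooth proof cited there. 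The one point worth making explicit is that your final step, passing from $\partial\,{\rm Pf}(P)=0$ together with ${\rm Pf}(P)(z_0)=0$ to ${\rm Pf}(P)\equiv0$, requires $M$ to be connected (on other components ${\rm Pf}(P)$ is merely constant), a standing assumption implicit in the statement here just as in \cite{C-M2}.
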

\begin{proposition}
Let $P$ be an exact holomorphic Poisson structure on a $4$-dimensional complex manifold $M$ satisfying $P(z_0)=0$, $z_0\in M$. Then we have
\begin{enumerate}
\item[(i)] The rank of $P$ at any point is at most two.
\item[(ii)] For every $\alpha\in\mathcal{O}(M)$, $\alpha P$ is a holomorphic Poisson structure on $M$.
\end{enumerate}
\end{proposition}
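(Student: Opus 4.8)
The plan is to deduce both parts from the preceding theorem, which applies because $P(z_0)=0$, together with elementary multilinear algebra of the bivector $P$ and of its associated holomorphic $2$-form $\beta:=\imath_P\omega=\omega^\flat(P)\in\Omega^2_\mathcal{O}(M)$.

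For (i), the preceding theorem gives that exactness of $P$ (with $P(z_0)=0$) forces $\beta\wedge\beta=\imath_P\omega\wedge\imath_P\omega=0$ on all of $M$. On a $4$-dimensional manifold a holomorphic $2$-form $\beta$ satisfies $\beta\wedge\beta=0$ at a point precisely when it is decomposable there (the Pl\"ucker relation), equivalently ${\rm rank}\,\beta\le 2$. Since $\omega^\flat$ is an $\mathcal{O}(M)$-isomorphism $\mathcal{V}^2_\mathcal{O}(M)\to\Omega^2_\mathcal{O}(M)$ which in dimension $4$ is a Hodge-type duality preserving the rank of the underlying skew matrices, this yields ${\rm rank}\,P={\rm rank}\,\beta\le 2$ at every point. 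Equivalently, the Pfaffian of $(P^{ij})$ vanishes identically, that is $P\wedge P=0$ as a holomorphic $4$-vector on $M$; I would confirm the rank-preservation and the proportionality $\imath_P\omega\wedge\imath_P\omega=c\,({\rm Pf}\,P)\,\omega$ with $c\neq 0$ by a one-line computation in a local frame with $\omega=dz^1\wedge\cdots\wedge dz^4$.

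For (ii), since $\alpha\in\mathcal{O}(M)$ and $P$ is holomorphic, $\alpha P\in\mathcal{V}^2_\mathcal{O}(M)$ is again holomorphic, so $\overline{\partial}(\alpha P)=0$ automatically and it remains only to verify the integrability condition $[\alpha P,\alpha P]=0$ for the holomorphic Schouten--Nijenhuis bracket. Writing $(\alpha P)^{ij}=\alpha P^{ij}$ in local coordinates and using $\partial(\alpha P^{jk})/\partial z^l=(\partial\alpha/\partial z^l)P^{jk}+\alpha\,\partial P^{jk}/\partial z^l$, the coordinate expression for the bracket of a bivector with itself splits (up to the normalization of the bracket) as
\begin{displaymath}
[\alpha P,\alpha P]^{ijk}=\alpha^2\,[P,P]^{ijk}+2\alpha\sum_{l}\frac{\partial\alpha}{\partial z^l}\left(P^{il}P^{jk}+P^{jl}P^{ki}+P^{kl}P^{ij}\right).
\end{displaymath}
The first term vanishes because $P$ is Poisson, so $[P,P]=0$. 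For the second term, the bracketed quantity is, up to sign, exactly the component $(P\wedge P)^{lijk}$, so the cross term equals a constant times $\alpha\,\imath_{\partial\alpha}(P\wedge P)$; by (i) we have $P\wedge P=0$ on $M$, hence it vanishes. Thus $[\alpha P,\alpha P]=0$, and together with $\overline{\partial}(\alpha P)=0$ this shows $\alpha P$ is a holomorphic Poisson structure.

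The main obstacle is the identification, in (ii), of the cross term with a contraction of $P\wedge P$: once the Schouten-bracket bookkeeping is arranged so that $\sum_l(\partial\alpha/\partial z^l)(P^{il}P^{jk}+P^{jl}P^{ki}+P^{kl}P^{ij})$ is recognized as $\pm(\imath_{\partial\alpha}(P\wedge P))^{ijk}$ rather than an unstructured quadratic in the $P^{ij}$, the conclusion is immediate from (i). Part (i) is comparatively routine, the only point requiring care being that the duality $\omega^\flat$ preserves rank in dimension $4$.
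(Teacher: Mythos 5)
Your proposal is correct, and in fact it supplies a proof that the paper itself omits: the proposition is stated without proof, implicitly deferring to the smooth-case arguments of Cruz and Mena-Matos cited as \cite{C-M2}, and your argument is exactly the natural holomorphic transcription of that route. For (i), the chain ``exact $\Rightarrow \imath_P\omega\wedge\imath_P\omega=0$ (by the preceding theorem) $\Rightarrow$ rank $\le 2$'' is sound; your cautionary point about $\omega^\flat$ preserving rank is genuine but easily discharged, since in dimension $4$ one has $\imath_P\omega\wedge\imath_P\omega=c\,({\rm Pf}\,P)\,\omega$ with $c\neq0$, and for a skew $4\times4$ matrix the rank is $\le 2$ precisely when the Pfaffian vanishes, so one can bypass rank-preservation of the duality altogether. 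For (ii), your Leibniz expansion is the standard one, and the identification of the cross term is correct: $P^{il}P^{jk}+P^{jl}P^{ki}+P^{kl}P^{ij}=-\left(P^{li}P^{jk}-P^{lj}P^{ik}+P^{lk}P^{ij}\right)$, which is (up to a nonzero constant) the component $(P\wedge P)^{lijk}$, so the cross term is a multiple of $\alpha\,\imath_{\partial\alpha}(P\wedge P)$ and vanishes because part (i) forces ${\rm Pf}\,P\equiv0$, i.e.\ $P\wedge P=0$ on all of $M$; together with $\overline{\partial}(\alpha P)=0$, which is automatic for $\alpha\in\mathcal{O}(M)$, this gives the conclusion. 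The only stylistic alternative worth noting is that (ii) can also be run through the paper's own machinery via the Koszul-type identity $[A,A]=D_\omega(A\wedge A)-2D_\omega(A)\wedge A$, which with $D_\omega(\alpha P)=\pm\imath_{\partial\alpha}P$ reduces to the same vanishing of $(\imath_{\partial\alpha}P)\wedge P$ by the rank bound, but this buys nothing over your direct coordinate computation.
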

\begin{remark}
Let $P$ be a holomorphic Poisson structure on a $4$-dimensional complex manifold which is exact with respect to a holomorphic volume form $\alpha\omega$, where $\alpha$ is a non-vanishing holomorphic function on $M$. If $P(z_0)=0$, $z_0\in M$, then $\alpha P$ is also a holomorphic Poisson structure on $M$, and moreover, $\alpha P$ is exact with respect to $\omega$, since $\imath_{\alpha P}\omega=\imath_P(\alpha\omega)$.
\end{remark}

It is easy to see that $D_\omega\circ D_\omega=0$, so it is a homological operator, and if $\mathcal{V}_\mathcal{O}^\bullet(M)=\bigoplus\limits_{p=0}^n\mathcal{V}^p_\mathcal{O}(M)$ is the algebra given by the direct sum of the space of the holomorphic $p$-multivectors on $M$, then the homology of the differential complex $(\mathcal{V}_\mathcal{O}^\bullet(M),D_\omega)$ is given by
\begin{displaymath}
H_p(M)=\frac{{\rm Ker}\{D_\omega:\mathcal{V}^p_\mathcal{O}(M)\rightarrow\mathcal{V}^{p-1}_\mathcal{O}(M)\}}{{\rm Im}\{D_\omega:\mathcal{V}^{p+1}_\mathcal{O}(M)\rightarrow\mathcal{V}^{p}_\mathcal{O}(M)\}}.
\end{displaymath}
For instance, if $M=\mathbb{C}^n$, using the Poincar\'{e} Lemma for $d=\partial$, we get $H_p(\mathbb{C}^n)=0$, for every $0\leq p<n$ and $H_n(\mathbb{C}^n)=\mathbb{C}$. Thus, the differential complex $(\mathcal{V}_\mathcal{O}^\bullet(\mathbb{C}^n),D_\omega)$ is exact.

For $p=1$ we have $D_\omega={\rm div}_\omega$. Indeed, if $A\in\mathcal{X}_\mathcal{O}(M)$ then
\begin{equation}
\label{x3}
(D_\omega A)\omega=\omega^\flat\circ D_\omega(A)=\partial\circ\omega^\flat(A)=\partial\circ\imath_A(\omega)=\mathcal{L}_A\omega=({\rm div}_\omega(A))\omega.
\end{equation}

Taking into account\eqref{x3} and \eqref{I4}, we consider the following definition.
\begin{definition}
The function $\alpha\in\mathcal{O}(M)$ is called a \textit{holomorphic last multiplier} of $A\in\mathcal{V}^p_\mathcal{O}(M)$ if
\begin{equation}
\label{IV3}
D_\omega(\alpha A)=0,
\end{equation}
or equivalently, $\alpha A$ is an exact holomorphic $p$-vector on $M$.
\end{definition}
It  follows that the set of holomorphic last multipliers of $(M,P,\omega)$ is a "measure of how far away"  $(M,P,\omega)$ is from being exact.

Since $\omega^\sharp$ is an $\mathcal{O}(M)$-isomorphism between $\Omega^{n-p}_\mathcal{O}(M)$ and $\mathcal{V}^p_\mathcal{O}(M)$, it follows that \eqref{IV3} is equivalent with $\partial(\omega^\flat(\alpha A))=0$, that is
\begin{equation}
\label{IV4}
\partial(\alpha\omega^\flat(A))=0
\end{equation}
which is a natural extension of the condition \eqref{II1}.

From the $\mathcal{O}(M)$-linearity of $\omega^\flat$ we have $\omega^\flat(\alpha A)=\alpha\omega^\flat(A)=(\alpha\omega)^\flat(A)$ which implies $(\alpha\omega)^\flat=(1/\alpha)\omega^\flat$ (it is assumed that $\alpha\neq 0$ everywhere). It follows that
\begin{equation}
\label{IV5}
\alpha D_{\alpha\omega}(A)=\omega^\sharp\circ \partial\circ \omega^\flat(\alpha A)=D_\omega(\alpha A)
\end{equation}
which yields the following.
\begin{proposition}
The function $\alpha\in\mathcal{O}(M)$ is a holomorphic last multiplier for $A\in\mathcal{V}^p_\mathcal{O}(M)$ if and only if
\begin{equation}
\label{IV6}
D_{\alpha\omega}(A)=0.
\end{equation}
\end{proposition}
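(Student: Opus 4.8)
The plan is to read the equivalence directly off the identity \eqref{IV5}, which has already done all of the computational work. Recall that \eqref{IV5} states $\alpha D_{\alpha\omega}(A)=\omega^\sharp\circ\partial\circ\omega^\flat(\alpha A)=D_\omega(\alpha A)$, a consequence of the $\mathcal{O}(M)$-linearity of $\omega^\flat$ together with the relation $(\alpha\omega)^\flat=(1/\alpha)\omega^\flat$ noted just before it. By the defining condition \eqref{IV3}, the statement that $\alpha$ is a holomorphic last multiplier for $A$ is by definition $D_\omega(\alpha A)=0$, so the task reduces to converting this into the condition $D_{\alpha\omega}(A)=0$.

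First I would invoke \eqref{IV5} to rewrite $D_\omega(\alpha A)=0$ equivalently as $\alpha D_{\alpha\omega}(A)=0$. Then, using the standing assumption (stated in the paragraph preceding \eqref{IV5}) that $\alpha$ is nowhere vanishing, I may cancel the scalar factor $\alpha$ to obtain $D_\omega(\alpha A)=0\Longleftrightarrow D_{\alpha\omega}(A)=0$. Chaining this with the definition \eqref{IV3} yields precisely the assertion: $\alpha\in\mathcal{O}(M)$ is a holomorphic last multiplier for $A\in\mathcal{V}^p_\mathcal{O}(M)$ if and only if $D_{\alpha\omega}(A)=0$.

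Since the genuine content lies entirely in the already-derived identity \eqref{IV5}, there is no real obstacle; the only point requiring care is tracking the non-vanishing hypothesis on $\alpha$, which is exactly what legitimizes the passage from $\alpha D_{\alpha\omega}(A)=0$ to $D_{\alpha\omega}(A)=0$. Without it one could only conclude that $D_{\alpha\omega}(A)$ vanishes away from the zero set of $\alpha$ and would then need a continuity or holomorphicity argument to extend across that set, but the invertibility of $\alpha$ makes the division immediate.
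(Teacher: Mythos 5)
Your proof is correct and is essentially the paper's own argument: the paper derives the identity \eqref{IV5}, $\alpha D_{\alpha\omega}(A)=D_\omega(\alpha A)$, and presents the proposition as its immediate consequence, exactly as you do by combining \eqref{IV5} with the definition \eqref{IV3}. Your explicit tracking of the nowhere-vanishing hypothesis on $\alpha$, which licenses cancelling the factor $\alpha$, corresponds to the paper's parenthetical assumption stated just before \eqref{IV5}.
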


Now, we study the holomorphic last multipliers for the holomorphic Poisson bivector fields on a holomorphic Poisson manifold $(M,P,\omega)$ endowed with a holomorphic volume form. Let $f\in\mathcal{O}(M)$, $Z_f$ its holomorphic Hamiltonian vector field and $Z_\omega$ the holomorphic modular vector field associated with $(M,P,\omega)$. Taking into account that $Z_\omega(f)={\rm div}_\omega(Z_f)=D_\omega(Z_f)$ and $Z_f=\imath_{\partial f}P$ we obtain
\begin{equation}
\label{IV7}
Z_\omega(f)=(D_\omega(P))(f).
\end{equation}
According to \eqref{IV3}, $\alpha\in\mathcal{O}(M)$ is a holomorphic last multiplier for the holomorphic Poisson bivector field $P\in\mathcal{V}^2_\mathcal{O}(M)$ if
\begin{equation}
\label{IV8}
D_\omega(\alpha P)=0,\,\,{\rm or\,\,equivalently}\,\,D_{\alpha\omega}(P)=0.
\end{equation}
From \eqref{IV7} we have
\begin{proposition}
A function $\alpha\in\mathcal{O}(M)$ is a holomorphic last multiplier for the holomorphic Poisson bivector field $P$ if and only if
\begin{equation}
\label{IV9}
Z_{\alpha\omega}=0.
\end{equation}
\end{proposition}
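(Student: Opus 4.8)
The plan is to chain together the equivalences already established rather than to compute anything new. First I would recall that, by definition, $\alpha\in\mathcal{O}(M)$ is a holomorphic last multiplier for the Poisson bivector $P\in\mathcal{V}^2_\mathcal{O}(M)$ precisely when $D_\omega(\alpha P)=0$; by the preceding Proposition containing \eqref{IV6} this is in turn equivalent to $D_{\alpha\omega}(P)=0$. Here $\alpha$ is tacitly assumed to be nowhere vanishing, so that $\alpha\omega$ is again a holomorphic volume form and the passage from $D_\omega(\alpha P)$ to $D_{\alpha\omega}(P)$ (via \eqref{IV5}) is legitimate.

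Next I would observe that the relation \eqref{IV7}, namely $Z_\omega(f)=(D_\omega(P))(f)$, was obtained using only that $\omega$ is a holomorphic volume form together with $Z_f=\imath_{\partial f}P$; hence it holds verbatim with $\alpha\omega$ substituted for $\omega$, giving $Z_{\alpha\omega}(f)=(D_{\alpha\omega}(P))(f)$ for every $f\in\mathcal{O}(M)$. Since a holomorphic vector field is determined by its action as a derivation on $\mathcal{O}(M)$, this pointwise identity upgrades to an equality of vector fields $Z_{\alpha\omega}=D_{\alpha\omega}(P)$; note that both sides indeed live in $\mathcal{V}^1_\mathcal{O}(M)=\mathcal{X}_\mathcal{O}(M)$, because $D_\omega$ lowers multivector degree by one and $P$ has degree two.

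Putting the two steps together, $\alpha$ is a holomorphic last multiplier for $P$ if and only if $D_{\alpha\omega}(P)=0$, if and only if $Z_{\alpha\omega}=0$, which is exactly the assertion \eqref{IV9}. I do not expect a genuine obstacle here, as the argument is purely a transport of earlier identities to the rescaled volume form. The only points demanding care are the nowhere-vanishing hypothesis on $\alpha$ (implicit throughout this subsection), which is what permits $\alpha\omega$ to be a volume form and \eqref{IV6} to apply, and the elementary but essential remark that the derivation identity \eqref{IV7} for $\alpha\omega$ pins down the vector field $Z_{\alpha\omega}$ itself rather than merely its values on a restricted class of functions.
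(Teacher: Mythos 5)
Your proof is correct and follows essentially the same route as the paper, whose argument is precisely the chain you describe: the equivalence $D_\omega(\alpha P)=0\Leftrightarrow D_{\alpha\omega}(P)=0$ from \eqref{IV5}--\eqref{IV8}, followed by applying \eqref{IV7} with the volume form $\alpha\omega$ to identify $Z_{\alpha\omega}$ with $D_{\alpha\omega}(P)$. Your explicit flags --- the nowhere-vanishing hypothesis on $\alpha$ and the remark that the derivation identity $Z_{\alpha\omega}(f)=(D_{\alpha\omega}(P))(f)$ for all $f$ pins down the vector field itself --- are exactly the points the paper leaves tacit.
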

If $(z^1,\ldots,z^n)$ is a local coordinates system on $M$ such that $\omega=dz^1\wedge\ldots\wedge dz^n$ and $P^{ij}$ are the local components of the holomorphic Poisson bivector $P$ of $M$, then using \eqref{III1}, the condition from \eqref{IV9} says that $\alpha\in\mathcal{O}(M)$ is a holomorphic last multiplier for $P$ if and only if
\begin{equation}
\label{IV10}
\sum_{j=1}^n\frac{\partial(\alpha P^{ij})}{\partial z^j}=0\,,\,\,i=1,\ldots,n.
\end{equation}

Let us reconsider now the previous examples for the case of holomorphic Poisson bivector fields.
\begin{example}
\label{e4.0}
Let $M$ be an $2$-dimensional complex manifold with local complex coordinates $(z^1,z^2)$, the holomorphic volume form $\omega=dz^1\wedge dz^2$ and the holomorphic Poisson bivector field $P=f(z^1,z^2)(\partial/\partial z^1)\wedge(\partial/\partial z^2)\in\mathcal{V}^2_\mathcal{O}(M)$. If $\alpha\in\mathcal{O}(M)$ is a holomorphic last multiplier for $P$ then \eqref{IV10} reads as follows:
\begin{displaymath}
\frac{\partial(\alpha f)}{\partial z^1}=\frac{\partial(\alpha f)}{\partial z^2}=0
\end{displaymath}
with the obvious solution $\alpha=A/f$, $A\in\mathbb{C}$ (it is assumed that $f$ is non-vanishing everywhere).
\end{example}
\begin{example}
Let us consider the holomorphic Lie-Poisson structure $P$ on $\mathbb{C}^n$ as in Example \ref{e3.1}. If $\alpha\in\mathcal{O}(\mathbb{C}^n)$ is a holomorphic last multiplier for $P$ then
\begin{equation}
\label{IV11}
\sum_{j=1}^nc^{ij}_k\frac{\partial(\alpha z^k)}{\partial z^j}=0\,,\,\,i=1,\ldots,n.
\end{equation}
If we consider the particular case $n=2$ with $c^{11}_1=c^{11}_2=c^{22}_1=c^{22}_2=0$ and $c^{12}_1,c^{12}_2\in\mathbb{C}$, then the general solution of \eqref{IV11} is $\alpha=A/(c^{12}_1z^1+c^{12}_2z^2)$, which also follows from Example \ref{e4.0}.
\end{example}
\begin{example}
On $\mathbb{C}^3$ take global complex coordinates $(z^1,z^2,z^3)$, $\omega=dz^1\wedge dz^2\wedge dz^3$ and any constant Poisson structure $P$ on $\mathbb{C}^3$ with local components $P^{ij}\in\mathbb{C}$, where $P^{ij}+P^{ji}=0$. Then, if $\alpha\in\mathcal{O}(\mathbb{C}^3)$ is a holomorphic last multiplier for the holomorphic Poisson bivector field $P$, the system \eqref{IV10} reads as follows:
\begin{equation}
\label{IV16}
\left\{
\begin{array}{ll}
\,\,\,\,\,\,\,\,\,\,\,\,\,\,\,\,\,\,\,\,\,\,\,\,P^{12}\frac{\partial\alpha}{\partial z^2}+P^{13}\frac{\partial\alpha}{\partial z^3}=0 &  \\
 &  \\
P^{21}\frac{\partial\alpha}{\partial z^1}\,\,\,\,\,\,\,\,\,\,\,\,\,\,\,\,\,\,\,\,\,\,\,\,\,+P^{23}\frac{\partial\alpha}{\partial z^3}=0 & \\
& \\
P^{31}\frac{\partial\alpha}{\partial z^1}+P^{32}\frac{\partial\alpha}{\partial z^2}\,\,\,\,\,\,\,\,\,\,\,\,\,\,\,\,\,\,\,\,\,\,\,\,\,=0 &
\end{array}
\right.
\end{equation}
with the general solution $\alpha=\phi(P^{23}z^1+P^{31}z^2+P^{12}z^3)$, where $\phi\in C^1(\mathbb{C})$.
\end{example}
\begin{example}
Let us consider the holomorphic Lie-Poisson bivector field $P$ on the dual $\mathfrak{sl}^*(2,\mathbb{C})$ of the Lie algebra of $\mathfrak{sl}(2,\mathbb{C})$, with holomorphic volume form  $\omega=dz^1\wedge dz^2\wedge dz^3$, that is, see \cite{C-F-I-U}
\begin{displaymath}
P=2z^2\frac{\partial}{\partial z^1}\wedge \frac{\partial}{\partial z^2}-2z^3\frac{\partial}{\partial z^1}\wedge \frac{\partial}{\partial z^3}+z^1\frac{\partial}{\partial z^2}\wedge\frac{\partial}{\partial z^3}.
\end{displaymath}
If $\alpha$ is a holomorphic last multiplier for $P$, then the system \eqref{IV10} reads as follows:
\begin{equation}
\label{IV14-1}
\left\{
\begin{array}{ll}
\,\,\,\,\,\,\,\,\,\,\,\,\,\,\,\,\,\,\,\,\,\,\,\,\,\,\,2z^2\frac{\partial\alpha}{\partial z^2}-2z^3\frac{\partial\alpha}{\partial z^3}=0 &  \\
 &  \\
-2z^2\frac{\partial\alpha}{\partial z^1}\,\,\,\,\,\,\,\,\,\,\,\,\,\,\,\,\,\,\,\,\,\,\,\,\,+z^1\frac{\partial\alpha}{\partial z^3}=0 & \\
& \\
\,\,\,\,\,2z^3\frac{\partial\alpha}{\partial z^1}-z^1\frac{\partial\alpha}{\partial z^2}\,\,\,\,\,\,\,\,\,\,\,\,\,\,\,\,\,\,\,\,\,\,\,\,\,=0 &
\end{array}
\right.
\end{equation}
with the general solution $\alpha=\varphi((z^1)^2+4z^2z^3)$, where $\varphi\in C^1(\mathbb{C})$.
\end{example}
\begin{example}
Let $\mathbb{C}^3$, global complex coordinates $(z^1,z^2,z^3)$, $\omega=dz^1\wedge dz^2\wedge dz^3$ and the holomorphic Poisson bivector field $P$ on $\mathbb{C}^3$ defined by the Poisson bracket from \eqref{III6} in Example \ref{e3.2}. Then, if $\alpha\in\mathcal{O}(\mathbb{C}^3)$ is a holomorphic last multiplier for the holomorphic Poisson bivector field $P$, the system \eqref{IV10} reads as follows:
\begin{equation}
\label{IV14}
\left\{
\begin{array}{ll}
\,\,\,\,\,\,\,\,\,\,\,\,\,\,\,\,\,\,\,\,\,\,\,\,\,\,\,\,\,\,\,\,\,\,\,\,\,\,\,\,\,\,\,\,\,\,\,\,\,(2z^3-z^1z^2)\frac{\partial\alpha}{\partial z^2}+(z^1z^3-2z^2)\frac{\partial\alpha}{\partial z^3}=0 &  \\
 &  \\
(z^1z^2-2z^3)\frac{\partial\alpha}{\partial z^1}\,\,\,\,\,\,\,\,\,\,\,\,\,\,\,\,\,\,\,\,\,\,\,\,\,\,\,\,\,\,\,\,\,\,\,\,\,\,\,\,\,\,\,\,\,\,\,\,\,+(2z^1-z^2z^3)\frac{\partial\alpha}{\partial z^3}=0 & \\
& \\
(2z^2-z^1z^3)\frac{\partial\alpha}{\partial z^1}+(z^2z^3-2z^1)\frac{\partial\alpha}{\partial z^2}\,\,\,\,\,\,\,\,\,\,\,\,\,\,\,\,\,\,\,\,\,\,\,\,\,\,\,\,\,\,\,\,\,\,\,\,\,\,\,\,\,\,\,\,\,\,\,\,\,=0 &
\end{array}
\right..
\end{equation}
Multiplying the first equation with $z^1$, the second equation with $z^2$ and the third equation with $z^3$, and suming, we get
\begin{equation}
\label{IV15}
\left(z^1(z^2)^2-z^1(z^3)^2\right)\frac{\partial\alpha}{\partial z^1}+\left(z^2(z^3)^2-z^2(z^1)^2\right)\frac{\partial\alpha}{\partial z^2}+\left(z^3(z^1)^2-z^3(z^2)^2\right)\frac{\partial\alpha}{\partial z^3}=0.
\end{equation}
The general solution of \eqref{IV15} is $\alpha=\phi\left((z^1)^2+(z^2)^2+(z^3)^2,z^1z^2z^3\right)$ with $\phi\in C^1(\mathbb{C}^2)$, and replacing in the first equation of \eqref{IV14} we obtain the general solution of \eqref{IV14} in the form
\begin{displaymath}
\alpha=\varphi\left((z^1)^2+(z^2)^2+(z^3)^2-z^1z^2z^3\right),
\end{displaymath}
where $\varphi\in C^1(\mathbb{C})$.
\end{example}

\noindent
Mircea Crasmareanu \\
Faculty of Mathematics, University "Al. I. Cuza" \newline
Address: Ia\c si, 700506, Bd. Carol I, no. 11, Rom\^ania \newline
email: \textit{mcrasm@uaic.ro}

\medskip

\noindent
Cristian Ida\\
Department of Mathematics and Computer Science, University Transilvania of Bra\c{s}ov\\
Address: Bra\c{s}ov 500091, Str. Iuliu Maniu 50, Rom\^{a}nia\\
email: \textit{cristian.ida@unitbv.ro}
\medskip

\noindent
Paul Popescu\\
Department of Applied Mathematics,  University of Craiova\\
Address: Craiova, 200585,  Str. Al. Cuza, No. 13,  Rom\^{a}nia\\
 email:\textit{paul$_{-}$p$_{-}$popescu@yahoo.com}

\end{document}